\documentclass{article}

\usepackage{graphicx} 
\usepackage[top = 0.9in, bottom = 0.9in, left =1in, right = 1in]{geometry}
\usepackage{geometry}
\usepackage[utf8]{inputenc}
\usepackage{hyperref}
\usepackage{listings}
\usepackage{multimedia} 
\usepackage{comment}
\usepackage[english]{babel}
\usepackage{amsfonts,amscd,amssymb,amsmath,mathrsfs}
\usepackage{wrapfig}
\usepackage{multirow}
\usepackage{verbatim,cite}
\usepackage{float}
\usepackage{cancel}
\usepackage{caption}
\usepackage{subcaption}
\usepackage{mathdots}
\usepackage{bbm}
\usepackage{bm}
\usepackage{tikz}
\usepackage{setspace}
\usepackage{xcolor}
\usepackage{amsthm}
\usepackage{cases}
\usepackage{tikz}
\usepackage{bbm}
\usepackage{authblk}
\usepackage[acronym]{glossaries}
\usepackage{datetime2}
\usepackage{datetime2-calc}

\let\vec = \mathbf

\DTMnewdatestyle{monthandyear}{%
}
\DTMsetdatestyle{monthandyear}

\hypersetup{
  colorlinks   = true, 
  urlcolor     = blue, 
  linkcolor    = blue, 
  citecolor   = blue 
}

\usetikzlibrary{shapes.geometric, arrows.meta, positioning, fit, shadows}

\definecolor{derivationblue}{RGB}{52, 152, 219}
\definecolor{computationred}{RGB}{231, 76, 60}
\definecolor{forwardgreen}{RGB}{39, 174, 96}
\definecolor{inverseorange}{RGB}{243, 156, 18}
\definecolor{lightgray}{RGB}{248, 249, 250}
\definecolor{darkgray}{RGB}{44, 62, 80}

\tikzset{
    derivation box/.style={
        rectangle, rounded corners=8pt, 
        fill=lightgray, draw=derivationblue, line width=2pt,
        text width=5.5cm, align=left, 
        minimum height=1.2cm, inner sep=12pt,
        drop shadow
    },
    computation box/.style={
        rectangle, rounded corners=8pt, 
        fill=lightgray, draw=computationred, line width=2pt,
        text width=5.5cm, align=left, 
        minimum height=1.2cm, inner sep=12pt,
        drop shadow
    },
    derivation header/.style={
        rectangle, rounded corners=8pt, 
        fill=derivationblue, text=white,
        text width=5.5cm, align=center, 
        minimum height=0.8cm, inner sep=8pt,
        font=\Large\bfseries
    },
    computation header/.style={
        rectangle, rounded corners=8pt, 
        fill=computationred, text=white,
        text width=5.5cm, align=center, 
        minimum height=0.8cm, inner sep=8pt,
        font=\Large\bfseries
    },
    step number/.style={
        circle, fill=#1, text=white, 
        minimum size=0.6cm, font=\bfseries\small
    },
    forward label/.style={
        rectangle, rounded corners=12pt, 
        fill=forwardgreen!20, draw=forwardgreen, line width=1.5pt,
        text=forwardgreen, font=\tiny\bfseries,
        inner sep=3pt, minimum height=0.4cm
    },
    inverse label/.style={
        rectangle, rounded corners=12pt, 
        fill=inverseorange!20, draw=inverseorange, line width=1.5pt,
        text=inverseorange, font=\tiny\bfseries,
        inner sep=3pt, minimum height=0.4cm
    },
    connection arrow/.style={
        ->, >=Stealth, line width=3pt, 
        color=darkgray!60
    }
}

\def\Xint#1{\mathchoice
   {\XXint\displaystyle\textstyle{#1}}%
   {\XXint\textstyle\scriptstyle{#1}}%
   {\XXint\scriptstyle\scriptscriptstyle{#1}}%
   {\XXint\scriptscriptstyle\scriptscriptstyle{#1}}%
   \!\int}
\def\XXint#1#2#3{{\setbox0=\hbox{$#1{#2#3}{\int}$}
     \vcenter{\hbox{$#2#3$}}\kern-.5\wd0}}

\def\dashint{\Xint-}

\newtheorem{theorem}{Theorem}
\newtheorem{corollary}{Corollary}
\newtheorem*{remark}{Remark}
\newtheorem{lemma}{Lemma}
\newtheorem{proposition}{Proposition}
\newtheorem{definition}{Definition}
\newtheorem{rhp}{RHP}

\title{A Riemann--Hilbert approach to the computation of transform pairs}
\author[1]{Kaitlynn Lilly}
\author[2]{Thomas Trogdon}
\affil[1,2]{Department of Applied Mathematics, University of Washington}
\affil[1]{\texttt{klilly@uw.edu}}
\affil[2]{\texttt{trogdon@uw.edu}}
\date{\DTMtoday}

\newacronym{rhp}{RHP}{Riemann--Hilbert problem}
\newacronym{pde}{PDE}{partial differential equation}
\newacronym{ode}{ODE}{ordinary differential equation}
\newacronym{fft}{FFT}{fast Fourier transform}
\newacronym{gmres}{GMRES}{generalized minimal residual}
\newacronym{uclm}{UCLM}{ultraspherically-collocated Levin's method}
\newacronym{akns}{AKNS}{Ablowitz-Kaup-Newell-Segur}
\begin{document}

\maketitle

\begin{abstract}
    We develop a unified methodology that integrates spectral theory, Riemann--Hilbert problems, and inverse scattering theory for the construction and numerical evaluation of transform pairs associated with linear variable-coefficient partial differential equations. The approach combines analytical formulae with numerical methods for ordinary differential equations and Riemann--Hilbert problems, yielding a hybrid analytical–numerical strategy for working with these transforms. Results are presented for transforms arising in the Dirac equation, demonstrating accurate computations, even in the presence of discontinuous coefficients.
\end{abstract}

\section{Introduction}

The study of \glspl{pde} with variable coefficients presents significant challenges in mathematical analysis and computation. While constant-coefficient \glspl{pde} can often be solved explicitly using Fourier or Laplace transforms, variable-coefficient equations arise more naturally in real-world models and are considerably more difficult to analyze. Foundational approaches such as Green’s function methods and eigenfunction expansions~\cite{stakgold_boundary_1967}, as well as the inverse scattering method\footnote{The inverse scattering method was originally developed for integrable nonlinear \glspl{pde} rather than for linear equations with variable coefficients. However, it has provided tools for a range of subsequent approaches to linear problems.}~\cite{faddeev_hamiltonian_1987}, have provided important theoretical tools. More recently, the unified transform method~\cite{fokas_unified_2008} has been extended to linear \glspl{pde} with variable coefficients \cite{deconinck_variable-coefficient_2025,fokas_boundary-value_2004}. Because these formulations are naturally expressed in terms of \glspl{rhp}, recent advances in numerical Riemann--Hilbert techniques~\cite{trogdon_scattering_2021} have substantially expanded their computational potential. Despite these developments in the unified transform method and numerical Riemann--Hilbert techniques, explicitly representable transform pairs for variable-coefficient problems remain scarce, and effective numerical implementations are largely undeveloped. Notably, the transform pairs introduced by Fokas~\cite{fokas_boundary-value_2004} are defined through \glspl{rhp}, but are formulated from an analytical, rather than computational, perspective.

In this paper, we combine a Riemann--Hilbert–based transform construction following the framework of Fokas with advancements in computational methods. Our approach is demonstrated on the Dirac operator, a first-order system that plays a central role in both mathematical physics and integrable systems. 
The Dirac equation is given by
\begin{equation}\label{eq:Dirac}
    \frac{\mathrm{d}}{\mathrm{d}t}\mathbf{n}(x,t)=i\sigma_3\frac{\mathrm{d}}{\mathrm{d}x}\mathbf{n}(x,t)-i\sigma_3\mathbf{Q}(x)\mathbf{n}(x,t), \quad (x,t)\in\mathbb{R}\times (0,\infty),\quad \mathbf{n}(x,t)\in\mathbb{C}^{2\times 1},
\end{equation}
where
\begin{equation}\label{eq:Potential}
    \quad \mathbf{Q}(x)=\begin{bmatrix}
        0&q(x)\\ \tau\bar{q}(x)&0
    \end{bmatrix} \quad \text{and}\quad\sigma_3=\begin{bmatrix}
        1&0\\
        0&-1
    \end{bmatrix},
\end{equation}
where $\tau=\pm 1$, $\sigma_3$ is the third Pauli matrix, and $\bar{q}$ denotes the complex conjugate of $q$. Initially, for simplicity, we assume that $q(x)$ is a Schwartz-class function, \emph{i.e.}, that $q\in\mathcal{S}(\mathbb{R},\mathbb{C})$, where
\begin{align*}
    \mathcal{S}(\mathbb{R},\mathbb{C}):=\left\{q\in C^\infty(\mathbb{R},\mathbb{C})~\Big|~\forall\alpha,\beta\in\mathbb{N},\sup_{x\in\mathbb{R}}\left |x^\alpha q^{(\beta)}(x)\right|<\infty\right\},
\end{align*}
with 
\begin{equation*}
    C^\infty(\mathbb{R},\mathbb{C})=\{q:\mathbb{R}\rightarrow\mathbb{C}~|~q^{(n)}\text{ exists and is continuous for all }n\in\mathbb{N}\}.
\end{equation*}
This ensures sufficient smoothness and rapid decay at infinity for the potential. Individual results below can be established for less regular potentials, and we highlight these on occasion. Applying separation of variables allows us to identify the spatial differential operator associated with~\eqref{eq:Dirac}. We use the ansatz
\[
\mathbf{n}(x,t) = e^{\lambda \mathbb{I} t} \mathbf{v}(x),\quad \mathbf{v}(x) = \mathbf{v}(x;\lambda)\in\mathbb{C}^{2\times 1},
\]
where $\mathbb{I}$ is the $2\times 2$ identity matrix and $\lambda\in\mathbb{C}$ is the spectral parameter. Substituting this into~\eqref{eq:Dirac} leads to the spatial \gls{ode}
\begin{equation}\label{eq:DiracIntroSpace}
    \frac{\mathrm{d}}{\mathrm{d}x}\mathbf{v}(x;\lambda)=\left(-i\lambda\sigma_3+\mathbf{Q}(x)\right)\mathbf{v}(x;\lambda).
\end{equation}
We pause to note that, after slight rearrangement, this represents the spectral problem for the Dirac operator which, in turn, admits a rich scattering theory as it is a special case of the \gls{akns} scattering problem~\cite{ablowitz_inverse_1974}. See~\cite{deift_inverse_1979,ablowitz_solitons_1991,ablowitz_complex_2003,beals_direct_1988,drazin_solitons_1989,zakharov_exact_1970} for theoretical developments related to the scattering transforms for this system. The associated scattering transform maps $\mathbf{Q}$ to scattering coefficients. Scattering coefficients are defined on the continuous spectrum of the associated operator, while discrete eigenvalues and associated norming constants together encode contributions related to the discrete spectrum. The inverse to this scattering transform, in particular, expresses generalized eigenfunctions as functions of the spectral parameter through a nonlinear map.

Although the connection between generalized Fourier transform formulations arising from the spectral theorem underlies works such as~\cite{wilkening_spectral_2015}, we establish here, using explicitly solvable \glspl{rhp}, what is to the best of our knowledge, the most explicit relationship between spectral theory and generalized Fourier transforms. The rich scattering theory of the Dirac operator then allows, via the numerical solution of \glspl{rhp}, the effective implementation of the associated transform. In short, and as noted above, the scattering theory gives a convenient representation of the generalized eigenfunctions as a function of the spectral parameter. When computed, these generalized eigenfunctions allow the inversion integrals to be computed effectively.

In recent years, numerical techniques for computing with \glspl{rhp} have improved, as demonstrated in~\cite{trogdon_scattering_2021} on the \gls{akns} scattering problem. The \glspl{rhp} we consider are equivalent to singular integral equations and the (infinite-dimensional) \gls{gmres} algorithm~\cite{GMRES} can be applied as in~\cite{trogdon_application_2015} using the so-called oscillatory Cauchy operator applied to a rational basis, without any need for contour deformations. When implemented appropriately, the method allows the computation of these generalized eigenfunctions, as functions of the spectral parameter, with increasing efficiency as oscillations increase. 

The present work contributes on two fronts. Theoretically, it provides a direct and concrete bridge between scattering data and spectral projections by establishing that the recovery formulas for the inverse transform are those implied by Stone’s formula in spectral theory. Computationally, the combination of oscillatory rational basis functions, Riemann–Hilbert formulations, and \gls{ode} solvers yields a framework for computing with a class of generalized Fourier transforms that is robust, accurate, and, we believe, widely applicable. In particular, the method remains accurate and effective even in the presence of discontinuous potential functions, discontinuous forcing terms, and high oscillations, suggesting that it captures the right operator-theoretic structure for handling irregular data. 
By making the operator-theoretic framework explicit and implementable, we hope to lay the groundwork for efficient numerical solvers for classes of variable-coefficient \glspl{pde}.

The paper is organized as follows. In Section~\ref{sec:Spectral}, we introduce key ideas from spectral theory that will be instrumental in demonstrating the equivalence of our recovery formulas and Stone's formula. In Section~\ref{sec:Fourier}, we illustrate the derivation and computation of the classical Fourier transform on $\mathbb{R}$, a familiar example, in order to demonstrate the general methodology. In Section~\ref{sec:GeneralizedSteps} 
we then detail the construction of the generalized forward and inverse transforms associated to the Dirac equation, including the treatment of poles corresponding to the discrete spectrum. Appendix~\ref{sec:RationalBasis} provides further details on the oscillatory rational basis and the evaluation of Cauchy integral operators, while Appendix~\ref{sec:Proof} contains proofs of some results stated in the main text.

\begin{remark}[Notational convention]
    We denote by $\bar{f}(x)$ the complex conjugate of a complex-valued function $f$. With the exception of the first and third Pauli matrices defined by
    \begin{equation*}
        \sigma_1 := \begin{bmatrix}
            0 & 1\\
            1 & 0
        \end{bmatrix}\quad\text{and}\quad\sigma_3 := \begin{bmatrix}
            1 & 0\\
            0&-1
        \end{bmatrix},
    \end{equation*}
    and the identity matrix $\mathbb{I}$, we use boldface capital letters to denote matrices and boldface lowercase letters to denote vectors. For two $n\times 1$ (column) vectors $\mathbf{u}$ and $\mathbf{v}$, we denote the $n\times 2$ matrix whose columns are $\mathbf{u}$ and $\mathbf{v}$ by $\begin{bmatrix}
        \mathbf{u} & \mathbf{v}
    \end{bmatrix}$. We use capital calligraphic letters to denote operators. Finally, we use $\mathbb{C}^\pm$ to denote the open upper and lower half-planes, respectively.
\end{remark}

\section{Key Aspects from Spectral Theory}\label{sec:Spectral}

    Because we are interested in transform pairs associated with differential equations, which are tightly connected to the spectral properties of the associated differential operator, we first introduce key concepts from the spectral theory of unbounded operators. A comprehensive reference is~\cite{reed_functional_1980}. Let $D(\mathcal{T})$ denote the domain of an operator $\mathcal{T}$.
    \begin{definition}[Adjoint]
        Let $\mathcal{H}$ be a Hilbert space with inner product $(\cdot,\cdot)$, and let $\mathcal{T}:D(\mathcal{T})\rightarrow\mathcal{H}$, $D(\mathcal{T})\subset\mathcal{H}$, be a densely defined linear operator.
        Let $D(\mathcal{T}^*)$ be the set of $\varphi\in\mathcal{H}$ for which there is an $\eta\in\mathcal{H}$ with 
        \begin{equation*}
            (\mathcal{T}\psi,\varphi)=(\psi,\eta)\quad\text{for all }\psi\in D(\mathcal{T}).
        \end{equation*}
        The adjoint of $\mathcal{T}$ relative to its domain $D(\mathcal{T})$ is the operator $\mathcal{T}^*:D(\mathcal{T}^*)\rightarrow\mathcal{H}$, $D(\mathcal{T}^*)\subset \mathcal{H}$, such that
        \begin{equation*}
            (\mathcal{T}\psi,\varphi)=(\psi,\mathcal{T}^*\varphi)\quad\text{for all }\psi\in D(\mathcal{T}),~\varphi\in D(\mathcal{T}^*).
        \end{equation*}
        Because $\mathcal{T}$ is densely defined, the adjoint $\mathcal{T}^*$ exists and is unique for the given domain $D(\mathcal{T})$.
    \end{definition}
    
    \begin{definition}[Resolvent]
        Let $\mathcal{T}$ be a closed operator on a Hilbert space $\mathcal{H}$. A complex number $\lambda$ is in the resolvent set, $\rho(\mathcal{T})$, if and only if $\mathcal{T}-\lambda $ is a bijection of $D(\mathcal{T})$ onto $\mathcal{H}$ with a bounded inverse. If $\lambda\in\rho(\mathcal{T})$, $(\mathcal{T}-\lambda)^{-1}$ is called the resolvent of $\mathcal{T}$ at $\lambda$. If $\lambda\notin\rho(\mathcal{T})$, then $\lambda$ is said to be in the spectrum $\sigma(\mathcal{T}):=\rho(\mathcal{T})^c$ of $\mathcal{T}$.
    \end{definition}

If $\mathcal{T}$ is self-adjoint (see definition~\ref{def:self-adjoint}), then its spectrum is contained in $\mathbb{R}$.

    \begin{definition}[Symmetric]
        A densely defined operator $\mathcal{T}$ on a Hilbert space is called symmetric (or Hermitian) if it is contained in its adjoint, $\mathcal{T}^*$, relative to $D(\mathcal{T})$, that is, if $D(\mathcal{T})\subset D(\mathcal{T}^*)$ and $\mathcal{T}\varphi=\mathcal{T}^*\varphi$ for all $\varphi\in D(\mathcal{T})$. Equivalently, $\mathcal{T}$ is symmetric if and only if 
        \begin{equation*}
            (\mathcal{T}\varphi,\psi)=(\varphi,\mathcal{T}\psi)\quad\forall~\varphi,\psi\in D(\mathcal{T}).
        \end{equation*}
    \end{definition}
    \begin{definition}[Self-adjoint]\label{def:self-adjoint}
        $\mathcal{T}$ is called self-adjoint if $\mathcal{T}=\mathcal{T}^*$, that is, if and only if $\mathcal{T}$ is symmetric and $D(\mathcal{T})=D(\mathcal{T}^*)$.
    \end{definition}
    The spectral theorem admits several formulations; here we use the projection-valued measure framework. Let $\mathcal{L}(\mathcal{H})$ denote the space of all bounded linear operators on a Hilbert space $\mathcal{H}$.
    \begin{definition}[Projection]
        If $P\in\mathcal{L}(\mathcal{H})$ and $P^2=P$, then $P$ is called a projection. If in addition $P=P^*$, then $P$ is called an orthogonal projection.
    \end{definition}
    \begin{definition}[Strong operator limit]
        Let $\{\mathcal{T}_n\}$ be a sequence of bounded operators on a Hilbert space $\mathcal{H}$. We say that $\mathcal{T}_n$ converges strongly to an operator $\mathcal{T}$, and write
        \begin{equation*}
            \mathcal{T}=s\text{-}\lim_{n\rightarrow\infty}\mathcal{T}_n,
        \end{equation*}
        if 
        \begin{equation*}
            \|\mathcal{T}_nx-\mathcal{T}x\|\rightarrow0\quad\text{for every }x\in\mathcal{H}.
        \end{equation*}
    \end{definition}
    For the next definition, let $\mathcal{B}(\mathbb{R})$ denote the Borel $\sigma$-algebra on $\mathbb{R}$ and let $\mathcal{I}:\mathcal{H}\rightarrow\mathcal{H}$ denote the identity operator on $\mathcal{H}$.
    \begin{definition}[Projection-valued measure]
        A mapping $P:\mathcal{B}(\mathbb{R})\rightarrow\mathcal{L}(\mathcal{H})$ that assigns to each Borel set $\Omega\subset\mathbb{R}$ an operator $P_\Omega$ is called a projection-valued measure if:
    \begin{itemize}
        \item[(a)] Each $P_\Omega$ is an orthogonal projection.
        \item[(b)] $P_\emptyset=0$, $P_{\mathbb{R}}=\mathcal{I}$.
        \item[(c)] If $\Omega=\bigcup_{n=1}^\infty\Omega_n$ with $\Omega_n\cap\Omega_m=\emptyset$ if $n\neq m$, then $P_\Omega=s$-$\lim_{N\rightarrow\infty}\sum_{n=1}^NP_{\Omega_n}$.
        \item[(d)] $P_{\Omega_1}P_{\Omega_2}=P_{\Omega_1\cap\Omega_2}$. 
    \end{itemize}
    \end{definition}
For $\varphi \in \mathcal{H}$, the map $\Omega \mapsto (\varphi, P_\Omega \varphi)$ defines a finite Borel measure on $\mathbb{R}$, which we denote formally by $\mathrm{d}(\varphi, P_\lambda \varphi)$, so that for any Borel set $\Omega \subset \mathbb{R}$,
\[
(\varphi, P_\Omega \varphi) = \int_\Omega \mathrm{d}(\varphi, P_\lambda \varphi).
\]
The, in general, complex measure $\mathrm{d}(\varphi, P_\lambda \psi)$ for $\varphi, \psi \in \mathcal{H}$ is defined by polarization from $\mathrm{d}(\varphi, P_\lambda \varphi)$.
Let $g:\mathbb{R} \to \mathbb{C}$ be a bounded measurable function. Then the operator $g(\mathcal{A})$ is bounded on all of $\mathcal{H}$ and defined via its sesquilinear form by
\[
(\varphi, g(\mathcal{A}) \psi) = \int_{-\infty}^\infty g(\lambda) \, \mathrm{d}(\varphi, P_\lambda \psi), \quad \varphi, \psi \in \mathcal{H}.
\]
If $g$ is unbounded, the operator $g(\mathcal{A})$ may only be defined on a dense subset of $\mathcal{H}$. Its domain is\footnote{The square in $|g(\lambda)|^2$ appears because
$\| g(\mathcal{A}) \varphi \|^2 = (g(\mathcal{A}) \varphi, g(\mathcal{A}) \varphi) = \int_{-\infty}^\infty |g(\lambda)|^2 \, \mathrm{d}(\varphi, P_\lambda \varphi)$,
which follows from the spectral theorem.}
\[
D(g(\mathcal{A})) := \left\{ \varphi \in \mathcal{H} \ \middle|\ \int_{-\infty}^\infty |g(\lambda)|^2 \, \mathrm{d}(\varphi, P_\lambda \varphi) < \infty \right\}.
\]
Requiring it to be finite ensures that $g(\mathcal{A})\varphi\in\mathcal{H}$. The set $D(g(\mathcal{A}))$ is dense in $\mathcal{H}$: for any $\varphi \in \mathcal{H}$, the truncated vectors $\varphi_n = P_{\{|g| \le n\}} \varphi$ belong to $D(g(\mathcal{A}))$ and satisfy $\varphi_n \to \varphi$ in norm as $n\rightarrow\infty$. On $D(g(\mathcal{A}))$, $g(\mathcal{A})$ is defined via 
\[
(\varphi, g(\mathcal{A}) \psi) = \int_{-\infty}^\infty g(\lambda) \, \mathrm{d}(\varphi, P_\lambda \psi), \quad \varphi,\psi \in D(g(\mathcal{A})).
\]
Symbolically, we write
\[
g(\mathcal{A}) = \int g(\lambda) \, \mathrm{d}P_\lambda.
\]
In particular, for $\varphi, \psi \in D(\mathcal{A})$,
\[
(\varphi, \mathcal{A} \psi) = \int_{-\infty}^\infty \lambda \, \mathrm{d}(\varphi, P_\lambda \psi).
\]
If $g$ is real-valued, $g(\mathcal{A})$ is self-adjoint on $D(g(\mathcal{A}))$. 
We have the following.
    \begin{theorem}[Spectral Theorem]\label{thm:SpectralThm}
        There is a one-to-one correspondence between self-adjoint operators $\mathcal{A}$ and projection-valued measures $\{P_\Omega\}$ on $\mathcal{H}$. The correspondence is given by 
        \begin{equation*}
            \mathcal{A}=\int_{-\infty}^\infty\lambda\mathrm{d}P_\lambda.
        \end{equation*}
    \end{theorem}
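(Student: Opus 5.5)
The plan is to prove the two directions of the correspondence separately. The direction from measures to operators is direct: the functional calculus described just before Theorem~\ref{thm:SpectralThm} produces an operator, and we check it is self-adjoint. The direction from operators to measures goes through the resolvent, reducing it to the bounded case via the Cayley transform.

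\textbf{From a projection-valued measure to a self-adjoint operator.} Given $\{P_\Omega\}$, define $\mathcal{A}=\int\lambda\,\mathrm{d}P_\lambda$ on
\[
D(\mathcal{A})=\Big\{\varphi:\int\lambda^2\,\mathrm{d}(\varphi,P_\lambda\varphi)<\infty\Big\}.
\]
This domain is dense by the truncation argument already given in the text, since $P_{[-n,n]}\varphi\to\varphi$. Symmetry follows because $\lambda$ is real, so the complex measure satisfies $\overline{\mathrm{d}(\psi,P_\lambda\varphi)}=\mathrm{d}(\varphi,P_\lambda\psi)$.

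For self-adjointness, take $\varphi\in D(\mathcal{A}^*)$ and set $\eta=\mathcal{A}^*\varphi$. Test against $\psi=P_{[-n,n]}\chi$. Because $P_{[-n,n]}$ commutes with $\mathcal{A}$ on the bounded piece, this gives $P_{[-n,n]}\eta=\mathcal{A}P_{[-n,n]}\varphi$. Hence
\[
\int_{-n}^n\lambda^2\,\mathrm{d}(\varphi,P_\lambda\varphi)\le\|\eta\|^2
\]
uniformly in $n$. Monotone convergence then gives $\varphi\in D(\mathcal{A})$, so $D(\mathcal{A}^*)=D(\mathcal{A})$.

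\textbf{From a self-adjoint operator to a projection-valued measure.} Given a self-adjoint $\mathcal{A}$, we first show that $\pm i\in\rho(\mathcal{A})$. For $\varphi\in D(\mathcal{A})$ we have $\|(\mathcal{A}\pm i)\varphi\|^2=\|\mathcal{A}\varphi\|^2+\|\varphi\|^2$. This shows injectivity and closed range. The range is dense, since $\ker(\mathcal{A}^*\mp i)=\ker(\mathcal{A}\mp i)=0$.

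We then form the Cayley transform $\mathcal{U}=(\mathcal{A}-i)(\mathcal{A}+i)^{-1}$, which is unitary and does not have $1$ as an eigenvalue. Apply the spectral theorem for bounded normal operators to $\mathcal{U}$. This can be obtained via the continuous functional calculus, the Riesz--Markov theorem for each vector $\varphi$, and polarization, using that $\sigma(\mathcal{U})\subset S^1$. The result is a projection-valued measure $E$ on $S^1$ with $E(\{1\})=0$.

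Pull $E$ back to $\mathbb{R}$ through the inverse Cayley map $z\mapsto i(1+z)/(1-z)$ to obtain $P_\Omega$. Properties (a)--(d) of a projection-valued measure transfer directly from $E$. Finally, identify $\int\lambda\,\mathrm{d}P_\lambda$ with $\mathcal{A}$ by checking that both have Cayley transform $\mathcal{U}$, and that the Cayley transform is injective on self-adjoint operators.

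\textbf{Uniqueness.} If two measures give the same $\mathcal{A}$, then they give the same resolvents $(\mathcal{A}-z)^{-1}=\int(\lambda-z)^{-1}\,\mathrm{d}P_\lambda$ for all $z\notin\mathbb{R}$. The scalar measures $\mathrm{d}(\varphi,P_\lambda\varphi)$ are therefore determined by their Stieltjes transforms. They are recovered through Stone's formula, and that formula is exactly the tool the paper uses later, which makes this route natural.

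\textbf{Main obstacle.} The hardest step is the bounded-operator spectral theorem for $\mathcal{U}$ that underlies the construction. Specifically, one must extend the continuous functional calculus to bounded Borel functions and verify countable additivity in the strong sense, property (c). I would do this vector by vector: the map $f\mapsto(\varphi,f(\mathcal{U})\varphi)$ is a positive functional, so Riesz--Markov gives a scalar measure, and dominated convergence gives the strong limits. The domain bookkeeping for unbounded $\lambda$ in the first direction is the secondary technical point. Since the paper cites~\cite{reed_functional_1980}, in context I would defer these standard steps to that reference.
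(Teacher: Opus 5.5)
The paper does not prove Theorem~\ref{thm:SpectralThm}. It quotes it as background from \cite{reed_functional_1980}, so your outline has to stand on its own, and it does: all three parts are correct (projection-valued measure $\Rightarrow$ self-adjoint, existence, uniqueness). Your reduction via the Cayley transform is the classical one. The cited reference instead applies the bounded spectral theorem to the normal resolvent $(\mathcal{A}+i)^{-1}$, proves the multiplication-operator form, and then obtains $P_\Omega=\chi_\Omega(\mathcal{A})$ from the resulting Borel functional calculus. Since $\mathcal{U}=\mathcal{I}-2i(\mathcal{A}+i)^{-1}$, both reductions rest on the same bounded object and differ mainly in packaging. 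Yours stays in projection-valued-measure language throughout, matching the form in which the paper states the theorem. It also needs only the spectral theorem for a unitary, where $\bar z=z^{-1}$ on $S^1$ reduces the continuous calculus to Laurent polynomials (or to Herglotz's theorem). The multiplication-operator route additionally produces a unitary map diagonalizing $\mathcal{A}$, which is the abstract counterpart of the generalized Fourier transforms the paper goes on to build.

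Two small repairs are needed. First, the identity $\|(\mathcal{A}\pm i)\varphi\|^2=\|\mathcal{A}\varphi\|^2+\|\varphi\|^2$ gives closed range only together with closedness of $\mathcal{A}$, which you should invoke explicitly. It holds automatically because $\mathcal{A}=\mathcal{A}^*$ is an adjoint. Second, for uniqueness, the whole-line form of Stone's formula~\eqref{eq:Stone} returns only $f$, i.e.\ $P_{\mathbb{R}}=\mathcal{I}$, and cannot tell two measures apart. What does the work is Stieltjes inversion for the finite scalar measures $\mathrm{d}(\varphi,P_\lambda\varphi)$, followed by polarization. Equivalently, you can use the interval form of Stone's formula, which yields $\tfrac12(P_{[a,b]}+P_{(a,b)})$. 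Citing the scalar fact directly, rather than the paper's Theorem~\ref{thm:Stone}, also avoids relying on a result the paper presents as a consequence of the theorem you are proving.
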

    The spectral theorem gives us a way to define generalized Fourier transform pairs associated with a given self-adjoint differential operator via its spectral decomposition. A useful consequence of Theorem~\ref{thm:SpectralThm} is a formula relating the resolvent and projection-valued measure of any self-adjoint operator.

   \begin{theorem}[Stone's Formula]\label{thm:Stone}
       Let $\mathcal{A}$ be an unbounded\footnote{As noted in \cite{reed_functional_1980}, although Stone’s formula is stated for bounded operators (Theorem VII.13), the same proof applies to unbounded operators. We therefore state it in the unbounded case.} self-adjoint operator. Then for any $f\in\mathcal{H}$,
       \begin{equation}\label{eq:Stone}
           \lim_{\epsilon\rightarrow0^+}\left(\frac{1}{2\pi i}\int_{-\infty}^{\infty}\left[(\mathcal{A}-\lambda-i\epsilon)^{-1}-(\mathcal{A}-\lambda+i\epsilon)^{-1}\right]f\mathrm{d}\lambda\right) = f.
       \end{equation}
   \end{theorem}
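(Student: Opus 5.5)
The plan is to use the functional calculus from Theorem~\ref{thm:SpectralThm} to reduce the operator statement to a scalar limit under the spectral measure, and then apply dominated convergence. The integral in \eqref{eq:Stone} is understood as a strong (Bochner) integral, or equivalently weakly, as the limit of integrals over $[-R,R]$ with $R\to\infty$.

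First, fix $\epsilon>0$ and $a<b$. For each real $\lambda$, the resolvent identity gives
\[
(\mathcal{A}-\lambda-i\epsilon)^{-1}-(\mathcal{A}-\lambda+i\epsilon)^{-1} = g_{\epsilon,\lambda}(\mathcal{A}), \qquad g_{\epsilon,\lambda}(x)=\frac{1}{x-\lambda-i\epsilon}-\frac{1}{x-\lambda+i\epsilon}=\frac{2i\epsilon}{(x-\lambda)^2+\epsilon^2}.
\]
Each $g_{\epsilon,\lambda}$ is bounded and continuous in $(x,\lambda)$. Pairing with $\varphi\in\mathcal{H}$, we get $(\varphi,g_{\epsilon,\lambda}(\mathcal{A})f)=\int g_{\epsilon,\lambda}(x)\,\mathrm{d}(\varphi,P_xf)$. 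The spectral measure $\mathrm{d}(\varphi,P_xf)$ is a finite complex measure, so Fubini applies and we can exchange the $\lambda$-integral with the $x$-integral. This yields
\[
\Big(\varphi,\tfrac{1}{2\pi i}\int_a^b g_{\epsilon,\lambda}(\mathcal{A})f\,\mathrm{d}\lambda\Big)=\int h_{\epsilon}^{a,b}(x)\,\mathrm{d}(\varphi,P_xf),\qquad h_\epsilon^{a,b}(x)=\frac{1}{\pi}\Big[\arctan\tfrac{b-x}{\epsilon}-\arctan\tfrac{a-x}{\epsilon}\Big].
\]
The operator $\tfrac{1}{2\pi i}\int_a^b g_{\epsilon,\lambda}(\mathcal{A})f\,\mathrm{d}\lambda$ is therefore $h^{a,b}_\epsilon(\mathcal{A})f$.

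Next, let $a\to-\infty$ and $b\to\infty$ with $\epsilon$ fixed. Since $0\le h_\epsilon^{a,b}\le 1$ and $h_\epsilon^{a,b}(x)\to 1$ pointwise, the estimate
\[
\|h_\epsilon^{a,b}(\mathcal{A})f-f\|^2=\int|h^{a,b}_\epsilon(x)-1|^2\,\mathrm{d}(f,P_xf)
\]
together with dominated convergence, using the finite measure $\mathrm{d}(f,P_xf)$ of total mass $\|f\|^2$, shows that the improper integral over $\mathbb{R}$ converges in norm to $h_\epsilon(\mathcal{A})f$ with $h_\epsilon\equiv 1$. So the expression inside the limit in \eqref{eq:Stone} already equals $f$ for every $\epsilon>0$, and the limit $\epsilon\to0^+$ is trivial.

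The main obstacle is justifying the interchange of the $\lambda$-integral with the functional calculus. The $\lambda$-integral of the operator-valued function is not absolutely convergent in operator norm over $\mathbb{R}$, since $\|g_{\epsilon,\lambda}(\mathcal{A})\|\le 2/\epsilon$ uniformly in $\lambda$, with no decay in $\lambda$. For this reason I would restrict to finite intervals $[a,b]$ first, where Fubini is safe because the integrand is bounded and the measure is finite. I would then pass to the limit $a\to-\infty$, $b\to\infty$ as above, interpreting the integral over $\mathbb{R}$ as this improper strong limit.

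If instead one wants the classical form of Stone's formula, with $[a,b]$ fixed and $\epsilon\to0$, the same computation applies. In that case $h^{a,b}_\epsilon\to\chi_{(a,b)}+\tfrac12\chi_{\{a,b\}}$ pointwise, and dominated convergence gives $\tfrac12(P_{[a,b]}+P_{(a,b)})f$. Letting the interval grow recovers $f$ by property (c) of the projection-valued measure together with $P_{\mathbb{R}}=\mathcal{I}$. This shows that the two orders of limits agree.
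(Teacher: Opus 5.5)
Your argument is correct and is essentially the proof the paper relies on. The paper gives no proof of its own but cites Reed--Simon, Theorem VII.13 \cite{reed_functional_1980}, and that argument is exactly your reduction via the functional calculus to the scalar kernel $h_\epsilon^{a,b}(x)=\frac{1}{\pi}\big[\arctan\frac{b-x}{\epsilon}-\arctan\frac{a-x}{\epsilon}\big]$, followed by bounded pointwise convergence. Your extra observations supply the step the paper leaves implicit when it restates the fixed-interval theorem in the whole-line form \eqref{eq:Stone}: for each fixed $\epsilon>0$ the whole-line integral, read as an improper strong (or weak) limit, already equals $f$ because the Poisson kernel has unit mass, and the opposite order of limits also gives $f$.
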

This relation reconstructs $f$ via the spectral decomposition encoded in the jump of the resolvent across the real axis and serves as the a priori theoretical justification for our derivation of transform pairs.

\section{Fourier Transform as an Example}\label{sec:Fourier}

We illustrate the derivation and computation of forward and inverse transforms using the Fourier transform as a model case. Following~\cite[Section 7.4.2]{ablowitz_complex_2003}, consider the following ODE
\begin{equation}\label{eq:Fourier}
    \frac{\mathrm{d}}{\mathrm{d}x}\mu(x;\lambda)-i\lambda\mu(x;\lambda)=f(x),
\end{equation}
where $\lambda\in\mathbb{C}$ is the spectral variable. We assume that, for convenience, $f\in\mathcal{S}(\mathbb{R},\mathbb{C})$. All solutions to~\eqref{eq:Fourier} are of the form
\begin{equation}\label{eq:FourierSolution}
    \mu(x;\lambda)= \mu(a;\lambda) e^{i\lambda (x-a)}+\int_{a}^x e^{i\lambda(x-s)}f(s)\mathrm{d}s,
\end{equation}
where $a \in \mathbb R$. Take $\mu(a;\lambda)=0$ and $a=\mp\infty$ and define the following solutions 
\begin{equation}\label{eq:mu}
    \mu_\pm(x;\lambda)=\int_{\mp\infty}^xe^{i\lambda(x-s)}f(s)\mathrm{d}s.
\end{equation}
Observe that $\mu_+$ is analytic in the upper half-plane and $\mu_-$ is analytic in the lower half-plane.
Integrating $\mu_\pm(x;\lambda)$ by parts results in
\begin{equation*}
    \mu_\pm(x;\lambda)=-\frac{1}{i\lambda}f(x)+\frac{1}{i\lambda}\int_{\mp\infty}^xe^{i\lambda(x-s)}f'(s)\mathrm{d}s = \mathcal{O}\left(\frac{1}{\lambda}\right),\quad |\lambda|\rightarrow\infty,\quad\pm\text{Im}(\lambda)\geq0.
\end{equation*}
Using~\eqref{eq:Fourier}, 
\begin{equation*}
    \frac{\mathrm{d}}{\mathrm{d}x}\mu_\pm(x;\lambda)=\int_{\mp\infty}^xe^{i\lambda(x-s)}f'(s)\mathrm{d}s=\mathcal{O}\left(\frac{1}{\lambda}\right),\quad |\lambda|\rightarrow\infty,\quad\pm\text{Im}(\lambda)\geq0.
\end{equation*}
Define the sectionally analytic function $m:\mathbb{C}\setminus\mathbb{R}\rightarrow\mathbb{C}$
\begin{equation*}
    m(x;\lambda)=\begin{cases}
        \mu_+(x;\lambda),\quad\text{Im}(\lambda)>0,\\
        \mu_-(x;\lambda),\quad\text{Im}(\lambda)<0.
    \end{cases}
\end{equation*}
This leads to the following \gls{rhp}.
\begin{rhp}\label{rhp:1}
    Find $m(x;\diamond):\mathbb{C}\setminus\mathbb{R}\rightarrow\mathbb{C}$ such that\footnote{In this work, we look for solutions of \glspl{rhp} to be bounded, with continuous boundary values.}
    \begin{align*}
        m^+(x;\lambda)-m^-(x;\lambda)&=\left(\int_{-\infty}^\infty e^{-i\lambda s}f(s)\mathrm{d}s\right)e^{i\lambda x}=:\hat{f}(\lambda)e^{i\lambda x},\quad \lambda\in\mathbb{R},\\
        m(x;\lambda)&=\mathcal{O}\left(\frac{1}{\lambda}\right) \text{ as }|\lambda|\rightarrow\infty,\quad \lambda\in\mathbb{C}\setminus\mathbb{R},
    \end{align*}
    where
    \begin{equation}\label{eq:sup_pm}
        m^\pm(x;\lambda)=\lim_{\epsilon\rightarrow 0^+}m(x;\lambda\pm i\epsilon).
    \end{equation}
\end{rhp}
Throughout this work, a $\pm$ superscript denotes the boundary value defined in~\eqref{eq:sup_pm}. Observe that $\hat{f}(\lambda)$ in the jump condition of RHP~\ref{rhp:1} is exactly the Fourier transform. 
We now solve RHP~\ref{rhp:1} and derive a recovery formula for $f(x)$ to obtain the inverse Fourier transform. The solutions to this type of \gls{rhp} are given by the Cauchy integral using the Plemelj Lemma~\cite[Chapter 2, Section 17]{muskhelishvili_singular_2013},
\begin{equation}\label{eq:m}
    m(x;\lambda)=\frac{1}{2\pi i}\int_{-\infty}^\infty\frac{e^{i\lambda'x}\hat{f}(\lambda')}{\lambda'-\lambda}\mathrm{d}\lambda'.
\end{equation}
Taking a limit of~\eqref{eq:Fourier} results in
\begin{equation*}
    \lim_{|\lambda|\rightarrow\infty}\left(\frac{\mathrm{d}}{\mathrm{d}x}m(x;\lambda)-i\lambda m(x;\lambda)=f(x)\right)\Rightarrow f(x)=\lim_{|\lambda|\rightarrow\infty}-i\lambda m(x;\lambda).
\end{equation*}
The recovery formula is expressed in terms of the solution to RHP~\ref{rhp:1}, an inhomogeneous \gls{rhp}. Using our expression for $m(x;\lambda)$ results in
\begin{align*}
    f(x)=\lim_{|\lambda|\rightarrow\infty}\frac{-1}{2\pi}\int_{-\infty}^\infty\frac{\lambda}{\lambda'-\lambda}e^{i\lambda'x}\hat{f}(\lambda')\mathrm{d}\lambda'.
\end{align*}
Suppose we take the limit along the imaginary axis. This allows us to straightforwardly apply the dominated convergence theorem, resulting in
\begin{equation}\label{eq:invFourier}
    f(x)=\frac{1}{2\pi}\int_{-\infty}^\infty e^{i\lambda x}\hat{f}(\lambda)\mathrm{d}\lambda.
\end{equation}

We now show that this derivation is exactly equivalent to the recovery of $f$ via Theorem~\ref{thm:Stone}. Rewriting~\eqref{eq:Fourier} gives
\begin{align*}
    \left(-i\frac{\mathrm{d}}{\mathrm{d}x}-\lambda\right)\mu(x;\lambda)=-if(x),\quad \lambda\in\mathbb{C}\setminus\mathbb{R}.
\end{align*}
Let $\mathcal{A} = -i\frac{\mathrm{d}}{\mathrm{d}x}$ with domain $H^1(\mathbb{R},\mathbb{C})\subset L^2(\mathbb{R},\mathbb{C})$. Then $\mathcal{A}$ is self-adjoint, so its resolvent $(\mathcal{A}-\lambda)^{-1}$ exists for $\lambda\in\mathbb{C}\setminus\mathbb{R}$. For $\text{Im}(\lambda)>0$, the function $\mu_+(x;\lambda)$ (and hence $m(x;\lambda)$) is given by~\eqref{eq:mu}. Writing $\lambda=\xi+i\eta$ with $\eta>0$ gives $|e^{i\lambda(x-s)}|=e^{-\eta(x-s)}$, and hence
\begin{equation*}
    |m(x;\lambda)|\leq(K_\eta *|f|)(x),\quad K_\eta(t)=e^{-\eta t}\mathbbm{1}_{t\geq0},
\end{equation*}
where $\mathbbm{1}$ is the indicator function. Since $K_\eta\in L^1(\mathbb{R},\mathbb{C})$ with $\|K_\eta\|_{L^1}=1/\eta$, and $f\in\mathcal{S}(\mathbb{R},\mathbb{C})\subset L^2(\mathbb{R},\mathbb{C})$, Young's inequality implies that
\begin{equation*}
    \|m(\diamond;\lambda)\|_{L^2(\mathbb{R},\mathbb{C})}\leq\|K_\eta\|_{L^1}\|f\|_{L^2(\mathbb{R},\mathbb{C})}=\frac{1}{\text{Im}(\lambda)}\|f\|_{L^2(\mathbb{R},\mathbb{C})}<\infty.
\end{equation*}
Thus, $m(\diamond;\lambda)\in L^2(\mathbb{R},\mathbb{C})$ for $\text{Im}(\lambda)>0$. The same argument applies for Im($\lambda)<0$ using $\mu_-(x;\lambda)$. Since $m$ satisfies $(\mathcal{A}-\lambda)m=-if$, and $(\mathcal{A}-\lambda)^{-1}$ exists as a bounded operator on $L^2(\mathbb{R},\mathbb{C})$ for $\lambda\in\mathbb{C}\setminus\mathbb{R}$, it follows by the uniqueness of $L^2$ solutions that
\begin{equation}\label{eq:OpSol}
    m(x;\lambda)=\left(\mathcal{A}-\lambda\right)^{-1}(-if(x)).
\end{equation}
Observe, using RHP~\ref{rhp:1}, that
\begin{align*}
    e^{i\lambda x}\hat{f}(\lambda)&=m^+(x;\lambda)-m^-(x;\lambda)=\lim_{\epsilon\rightarrow 0^+}\left[m(x;\lambda+i\epsilon)-m(x;\lambda-i\epsilon)\right]\\
    &=\lim_{\epsilon\rightarrow 0^+}\left[(\mathcal{A}-\lambda-i\epsilon)^{-1}-(\mathcal{A}-\lambda+i\epsilon)^{-1}\right](-if)(x),
\end{align*}
where the last equality follows from replacing $m$ with~\eqref{eq:OpSol}. Applying this to~\eqref{eq:invFourier} results in
\begin{align}\label{eq:preDCT}
    f(x)&=\frac{1}{2\pi}\int_{-\infty}^\infty \lim_{\epsilon\rightarrow 0^+}\left[(\mathcal{A}-\lambda'-i\epsilon)^{-1}-(\mathcal{A}-\lambda'+i\epsilon)^{-1}\right](-if)(x)\mathrm{d}\lambda'.
\end{align}
With some integration by parts, the dominated convergence theorem justifies interchanging the limit and the integral, giving
\begin{align*}
        f(x)&=\lim_{\epsilon\rightarrow 0^+}\left(\frac{1}{2\pi i}\int_{-\infty}^\infty\left[(\mathcal{A}-\lambda'-i\epsilon)^{-1}-(\mathcal{A}-\lambda'+i\epsilon)^{-1}\right](f)(x)\mathrm{d}\lambda'\right).
\end{align*}
This coincides with the reconstruction formula of Theorem~\ref{thm:Stone}, establishing the equivalence with Stone’s formula. 

\subsection{Computation}

We briefly review ideas from Levin’s method~\cite{levin_analysis_1997} as they will be fundamental to our computations. Consider an oscillatory integral
\begin{equation*}
    I(\lambda)=\int_a^bf(x)e^{i\lambda g(x)}\mathrm{d}x,\quad g:[a,b]\rightarrow\mathbb{R}.
\end{equation*}
Define $u(x)=\int^xf(t)e^{i\lambda g(t)}\mathrm{d}t$ and write $u(x)=v(x)e^{i\lambda g(x)}$. Substituting into $u'(x)=f(x)e^{i\lambda g(x)}$ yields the first-order ODE
\begin{equation}\label{eq:Levin}
    \frac{\mathrm{d}v(x)}{\mathrm{d}x}+i\lambda g'(x)v(x)=f(x).
\end{equation}
Factoring out $e^{i\lambda g(x)}$ isolates the oscillatory component, ideally leaving $v(x)$ as a slowly varying function. Levin's collocation strategy selects a slowly varying, non-oscillatory solution $v(x)$ by representing it in a slowly varying basis, without imposing boundary conditions. Once $v(x)$ is approximated, the integral is recovered via $I(\lambda)=v(b)e^{i\lambda g(b)}-v(a)e^{i\lambda g(a)}=u(b)-u(a)$.

Motivated by this mechanism, we express the Fourier transform in terms of solutions to~\eqref{eq:Fourier}. As in the previous section, where the Fourier transform arose from the jump $m^+(x;\lambda)-m^-(x;\lambda)$, the functions $\mu_+(x;\lambda)$ and $\mu_-(x;\lambda)$ furnish an analogous pair of solutions for the ODE~\eqref{eq:Fourier}. Their difference recovers the Fourier transform,
\begin{align*}
    \mu_+(x;\lambda)-\mu_-(x;\lambda)&=\int_{-\infty}^xe^{i\lambda(x-s)}f(s)\mathrm{d}s-\int_{\infty}^xe^{i\lambda(x-s)}f(s)\mathrm{d}s=\int_{-\infty}^\infty e^{i\lambda(x-s)}f(s)\mathrm{d}s=e^{i\lambda x}\hat{f}(\lambda).
\end{align*}
Thus,
\begin{equation}\label{eq:CompFourierTrans}
    \hat{f}(\lambda)=e^{-i\lambda x}\left(\mu_+(x;\lambda)-\mu_-(x;\lambda)\right).
\end{equation}
All \glspl{ode} here are solved using the ultraspherical rectangular collocation method of~\cite{trogdon_ultraspherical_2024}. In contrast to Levin's original approach, we explicitly impose decay of $\mu_\pm(x;\lambda)$ as $x\rightarrow\mp\infty$. In practice, the problem is truncated to a finite interval outside of which the solution is smaller than numerical precision, and the decay condition is imposed as a boundary condition in the ultraspherical collocation scheme. Note that the general solution of~\eqref{eq:Fourier} is~\eqref{eq:FourierSolution}. The decay condition at $-\infty$ (for $\mu_+$) and $+\infty$ (for $\mu_-$) eliminates the oscillatory homogeneous solution. Consequently, the numerical solution essentially coincides with the non-oscillatory particular solution selected in Levin's method. While Levin's method is only guaranteed to be accurate for $\lambda$ sufficiently large, our approach will be successful\footnote{See \cite{trogdon_ultraspherical_2024} for convergence results that imply uniform convergence with respect to $\lambda$.} for all valid values of $\lambda$. We refer to this as the \gls{uclm}. 
Figure~\ref{fig:FourierTransform} shows a plot of the Fourier transform of
\begin{equation}\label{eq:Fouriertest}
    f(x)=\begin{cases}0,\quad &x<0,\\\frac{1}{2},\quad &x=0,\\e^{-x^2},&x>0,\end{cases}
\end{equation}
computed using the \gls{uclm}. 

\begin{figure}[htbp]
    \centering
    \begin{subfigure}[b]{0.49\textwidth}
        \centering
        \includegraphics[width=\textwidth]{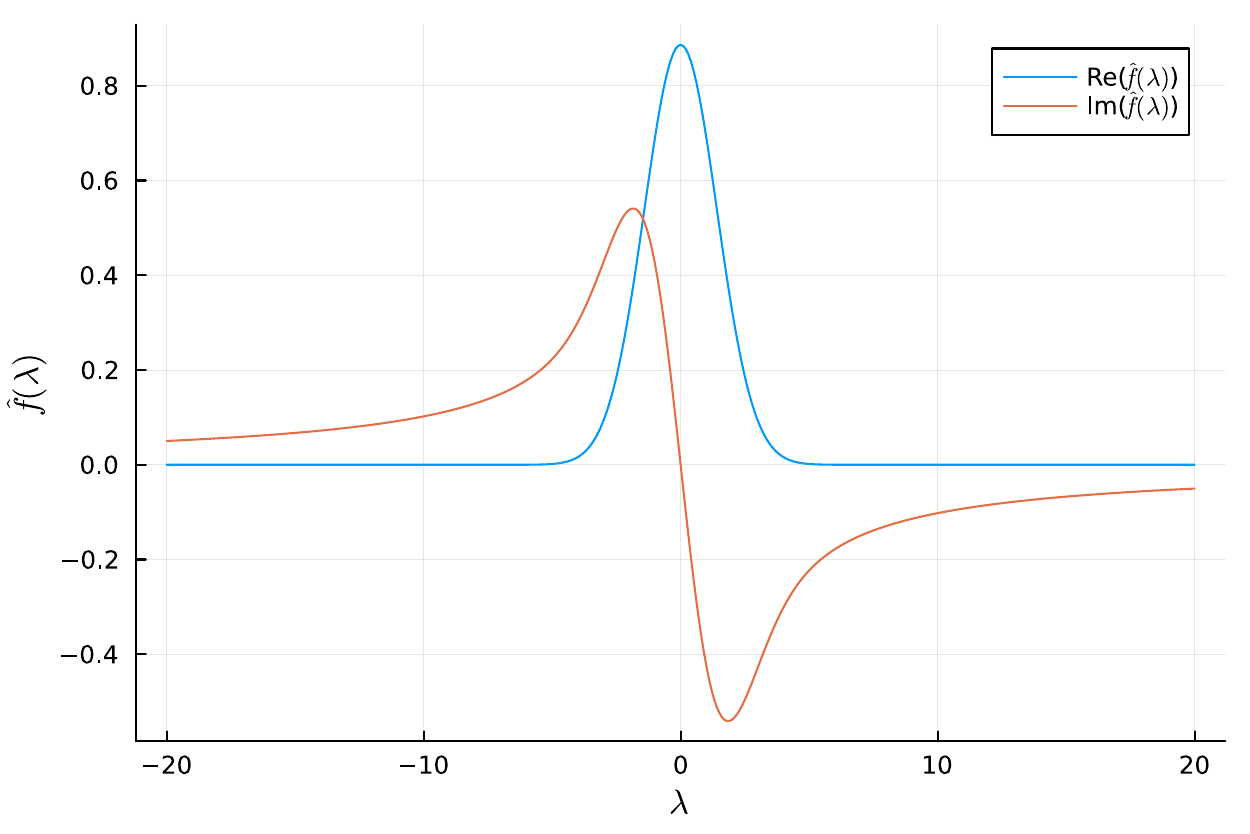}
        \caption{Fourier transform}
        \label{fig:FourierTransformPlot}
    \end{subfigure}
    \hfill
    \begin{subfigure}[b]{0.49\textwidth}
        \centering
        \includegraphics[width=\textwidth]{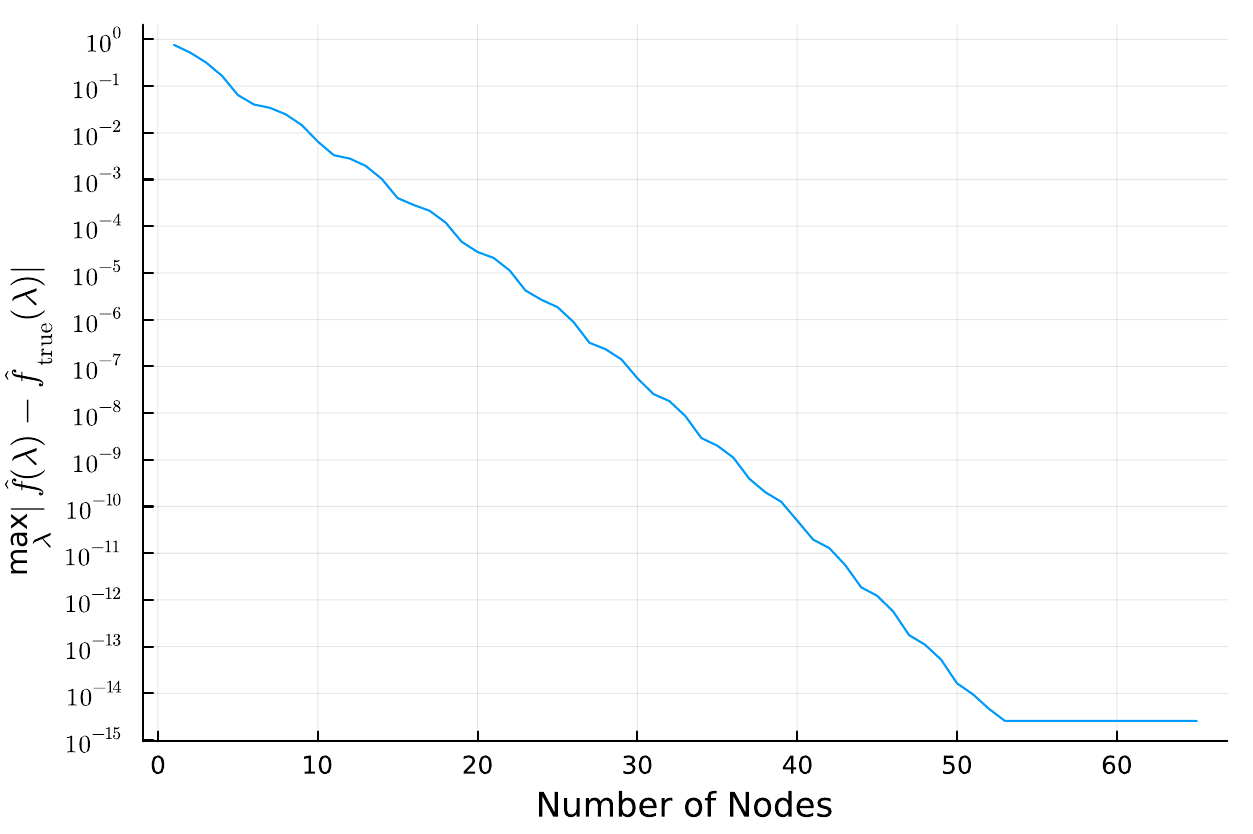}
        \caption{Error}
        \label{fig:FourierTransformError}
    \end{subfigure}
    \caption{(a) Real and imaginary parts of the computed Fourier transform of~\eqref{eq:Fouriertest} via \gls{uclm} on $[-20,20]$ using 55 nodes. (b) Max absolute error between the computed and exact Fourier transforms evaluated on a uniform grid in $\lambda \in [-20,20]$ as a function of the number of nodes used.}
    \label{fig:FourierTransform}
\end{figure}

We next compute the inverse transform. We represent the inverse transform using a rational basis expansion in $\lambda$, following ideas presented in~\cite{trogdon_scattering_2021}. While the \gls{uclm} could also be applied to the inverse transform, the rational basis approach allows explicit evaluation of the integrals term by term, which is convenient and efficient, and will extend more directly to other transforms. Define the following set of oscillatory rational basis functions\footnote{This choice of basis will be motivated in later sections on generalized transform pairs. An in-depth description of this basis and its properties is given in Appendix~\ref{sec:RationalBasis}.}
\begin{equation*}
    R_{j,\alpha}(\lambda)=e^{i\lambda\alpha}\left[\left(\frac{\lambda-i}{\lambda+i}\right)^j-1\right],\quad j\in\mathbb{Z},~\alpha\in\mathbb{R}.
\end{equation*}
Suppose
\begin{equation*}
    \hat{f}(\lambda)=\sum_jc_jR_{j,0}(\lambda).
\end{equation*}
Then the inverse Fourier transform becomes, 
\begin{equation}\label{eq:InvFourier}
    f(x)=\frac{1}{2\pi}\sum_jc_j\int_{-\infty}^\infty R_{j,x}(\lambda)\mathrm{d}\lambda=\begin{cases}
        -\sum\limits_j c_j|j|\quad &x=0,\\
        -2\sum\limits_{j ~:~ \mathrm{sign}(x)j>0}c_jL_{|j|-1}^{(1)}(2|x|)e^{-|x|}\quad&\mathrm{otherwise},
    \end{cases}
\end{equation}
where $L_j^{(1)}$ are the generalized Laguerre polynomials of degree $j$~\cite{olver_nist_2010} and an empty sum is taken to be zero. The second equality follows from the formula for integrals of the oscillatory rational basis functions derived in~\cite{trogdon_application_2015} and described in Appendix~\ref{subsec:Integration}. In practice, the series in~\eqref{eq:InvFourier} is truncated to a finite set of indices $|j|<J$, where $J$ is determined by the desired accuracy and decay of the coefficients $c_j$. We evaluate the Laguerre polynomials using their three-term recurrence. This gives a method to compute the inverse Fourier transform~\cite{weber_numerical_1980,trogdon_rational_2014}. A plot of the recovery of~\eqref{eq:Fouriertest} using~\eqref{eq:InvFourier} and its error is shown in Figure~\ref{fig:InvFourierTransform}. Figure~\ref{fig:InvFourierTransform} shows accurate recovery, with improved absolute error as $|x|$ increases. This method is accurate for all values of $x$, despite the discontinuity of $f(x)$ at $x=0$.

\begin{figure}[htbp]
    \centering
    \begin{subfigure}[b]{0.49\textwidth}
        \centering
        \includegraphics[width=\textwidth]{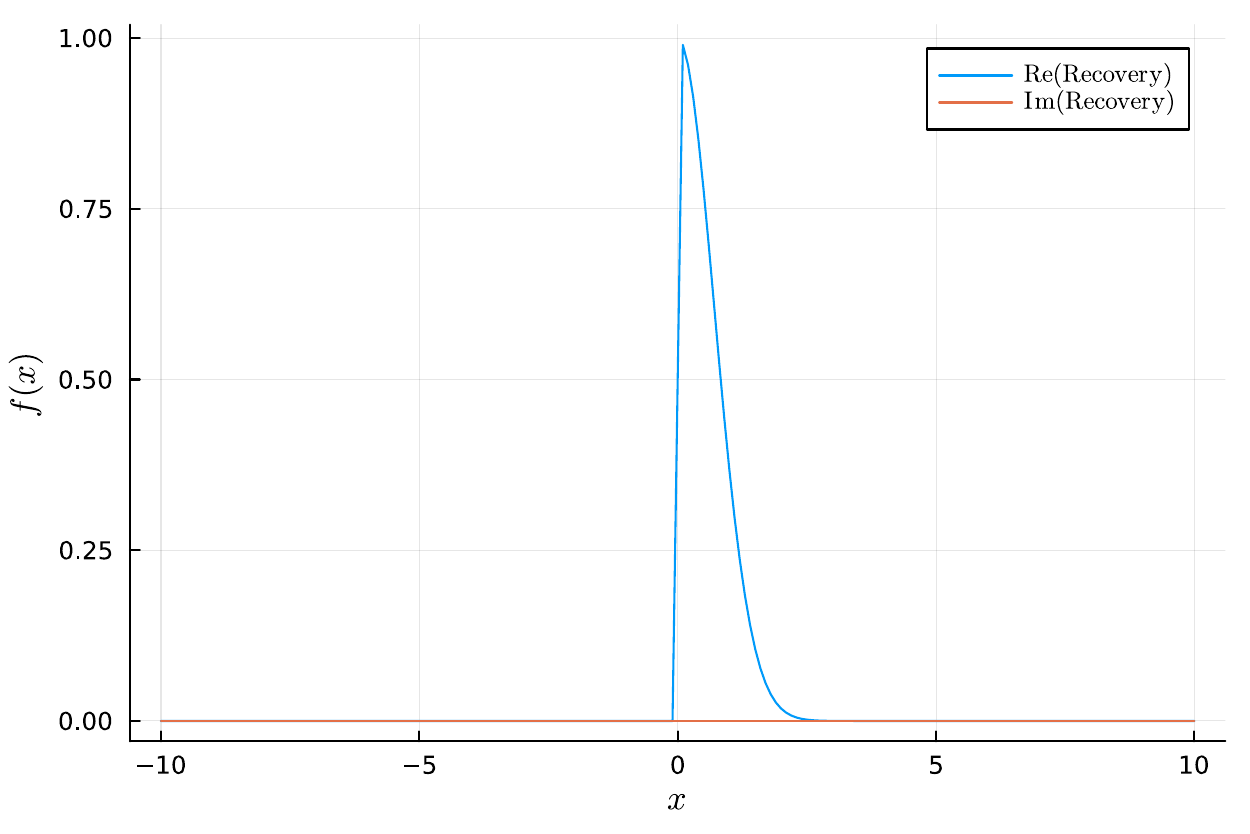}
        \caption{Inverse Fourier transform}
        \label{fig:InvFourierTransformPlot}
    \end{subfigure}
    \hfill
    \begin{subfigure}[b]{0.49\textwidth}
        \centering
        \includegraphics[width=\textwidth]{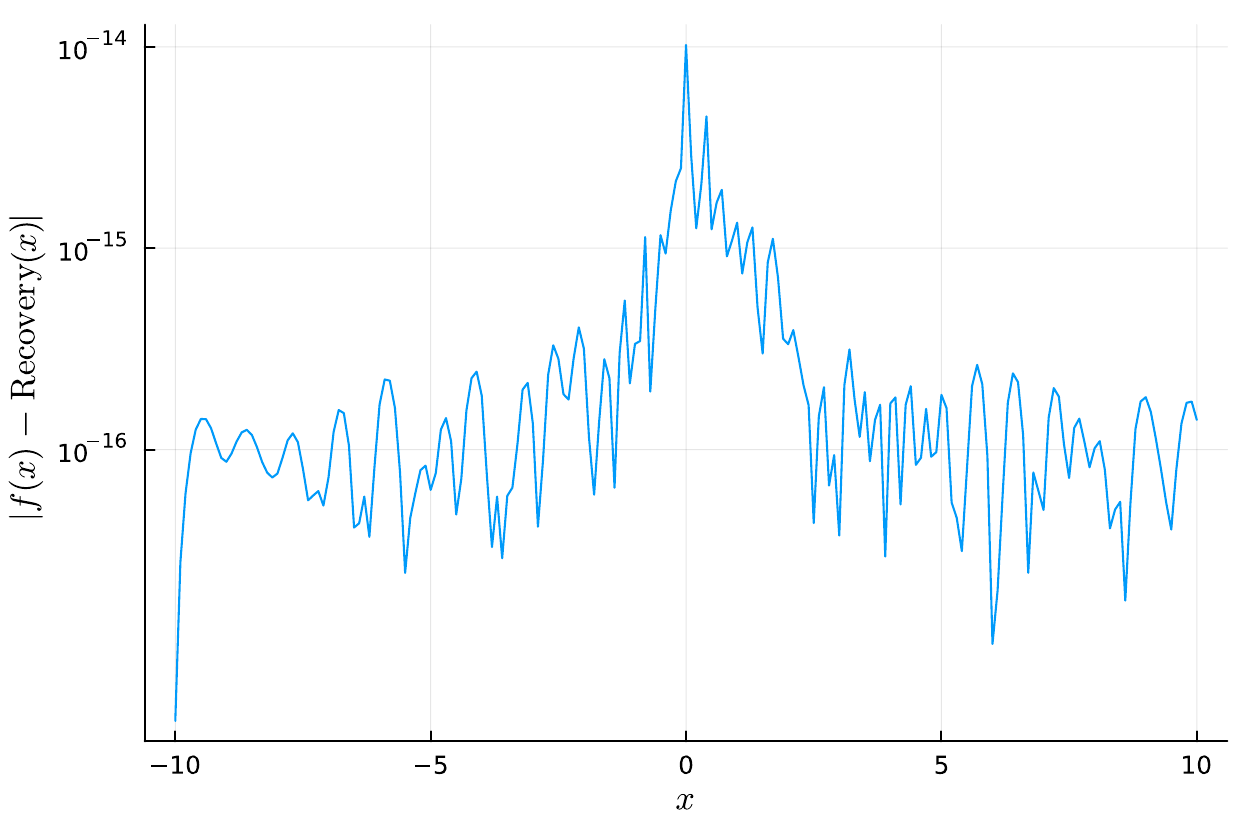}
        \caption{Error}
        \label{fig:InvFourierTransformError}
    \end{subfigure}
    \caption{(a) Real and imaginary parts of the computed inverse Fourier transform of~\eqref{eq:Fouriertest} via~\eqref{eq:InvFourier}. (b) Absolute error between the computed and true result.}
    \label{fig:InvFourierTransform}
\end{figure}

\section{Generalized Transform Pairs}\label{sec:GeneralizedSteps}

In Section~\ref{sec:Fourier}, the Fourier transform was realized through the resolvent of the underlying spatial operator. This formulation suggests a general framework: transform pairs can be constructed from the resolvent. We now apply this approach to the Dirac system by constructing the resolvent of~\eqref{eq:DiracIntroSpace}.
We consider
\begin{equation}\label{eq:DiracSpatial}
    \frac{\mathrm{d}}{\mathrm{d}x}\mathbf{v}(x;\lambda)+\left(i\lambda\sigma_3-\mathbf{Q}(x)\right)\mathbf{v}(x;\lambda)=\mathbf{f}(x),\quad \mathbf{f}(x)\in\mathbb{C}^{2\times 1},
\end{equation}
or, equivalently,
\begin{equation*}
    \left(i\sigma_3\frac{\mathrm{d}}{\mathrm{d}x}-i\sigma_3\mathbf{Q}(x)-\lambda\right)\mathbf{v}(x;\lambda)=i\sigma_3\mathbf{f}(x),
\end{equation*}
where we used the fact that $\sigma_3\sigma_3=\mathbb{I}$. We consider the operator $\mathcal{T}=i\sigma_3\frac{\mathrm{d}}{\mathrm{d}x}-i\sigma_3\mathbf{Q}(x)$. Then we can represent a solution to~\eqref{eq:DiracSpatial} in terms of the resolvent of $\mathcal{T}$ as
\begin{equation}\label{eq:DiracResolvent}
    \mathbf{v}(x;\lambda)=(\mathcal{T}-\lambda)^{-1}(i\sigma_3\mathbf{f})(x),\quad \lambda\in\rho(\mathcal{T}).
\end{equation}
Let $\mathcal{A}=i\sigma_3\frac{\mathrm{d}}{\mathrm{d}x}$ with domain $D(\mathcal{A})=H^1(\mathbb{R},\mathbb{C}^2)\subset L^2(\mathbb{R},\mathbb{C}^{2})$. We equip $L^2(\mathbb{R},\mathbb{C}^{2})$ with the standard inner product
\begin{equation*}
    \left( \mathbf{f},\mathbf{g}\right)=\int_{-\infty}^\infty\mathbf{f}(x)\cdot\bar{\mathbf{g}}(x)\,\mathrm{d}x,
\end{equation*}
where $\bar{\mathbf{g}}(x)$ denotes the component-wise complex conjugate and $\cdot$ denotes the standard dot product.
The operator $\mathcal{A}$ is self-adjoint because $\sigma_3$ is Hermitian and commutes with $\frac{\mathrm{d}}{\mathrm{d}x}$. When $\tau=1$ in~\eqref{eq:Potential}, let $\mathcal{B}=-i\sigma_3\mathbf{Q}(\diamond)$ with domain $D(\mathcal{B})=L^2(\mathbb{R},\mathbb{C}^{2})$. Since $q(x)$ is Schwartz-class, the matrix-valued function $\mathbf{Q}(x)$ is bounded, and therefore $\mathcal{B}$ defines a bounded operator on $L^2(\mathbb{R},\mathbb{C}^{2})$. Then, by the Kato-Rellich theorem~\cite{hislop_introduction_1996},
we conclude that $\mathcal{T}$ is self-adjoint.
We apply Theorems~\ref{thm:SpectralThm} and~\ref{thm:Stone} which imply that our \gls{rhp} approach for deriving transform pairs via the jump in the resolvent of $\mathcal{T}=i\sigma_3\frac{\mathrm{d}}{\mathrm{d}x}-i\sigma_3\mathbf{Q}(x)$ will succeed. 
\begin{remark}
    We do not have a self-adjoint operator in the case where $\tau=-1$ in~\eqref{eq:Potential}, 
    and we do not know a priori that our approach extends by appealing to the spectral theorem alone. This extension will be dealt with in Section~\ref{sec:DiracPoles}.
\end{remark}

To construct the resolvent, we begin by solving the free homoegenous system corresponding to $\mathbf{Q}(x)=\mathbf{0}$, $\mathbf{f}(x)=\mathbf{0}$,
\begin{equation*}
    \mathbf{H}'(x;\lambda)+i\lambda\sigma_3\mathbf{H}(x;\lambda)=0.
\end{equation*}
This problem has solution $\mathbf{H}(x;\lambda)=e^{-i\lambda\sigma_3x}\mathbf{C}(\lambda)$, where $\mathbf{C}(\lambda)\in\mathbb{C}^{2\times2}$ is a constant matrix.
We now solve the full homogeneous system corresponding to $\mathbf{f}(x)=\mathbf{0}$,
\begin{equation}\label{eq:f=0}
    \mathbf{M}'(x;\lambda)+i\lambda\sigma_3\mathbf{M}(x;\lambda)=\mathbf{Q}(x)\mathbf{M}(x;\lambda).
\end{equation}
To solve this, we will use variation of parameters. Consider the following proposition:
\begin{proposition}[Variation of parameters for systems]\label{prop:VarofParam}
    Consider
    \begin{equation}\label{eq:VarofParam1}
        \begin{cases}
            \mathbf{x}'(t)=\mathbf{A}(t)\mathbf{x}(t)+\mathbf{f}(t),\\
            \mathbf{x}(0)=\mathbf{x}_0,
        \end{cases}
    \end{equation}
    where $\mathbf{x}(t)\in\mathbb{C}^{n}$, $\mathbf{A}$ is an $n\times n$ matrix-valued function and $\mathbf{f}$ is an appropriately-sized vector-valued function. Now, let $\mathbf{x}_1(t),...,\mathbf{x}_n(t)$ be $n$ linearly independent solutions of the homogeneous problem ($\mathbf{f}=\mathbf{0}$), and form the matrix
    \begin{equation*}
        \mathbf{X}(t)=\begin{bmatrix}
            \mathbf{x}_1(t)&\hdots&\mathbf{x}_n(t)
        \end{bmatrix}.
    \end{equation*}
    For coefficient functions $\mathbf{A}$, $\mathbf{f}$, the solution to~\eqref{eq:VarofParam1} is given by
    \begin{equation*}
        \mathbf{x}(t)=\mathbf{X}(t)\mathbf{X}(0)^{-1}\mathbf{x}_0+\mathbf{X}(t)\int_0^t\mathbf{X}(s)^{-1}\mathbf{f}(s)\mathrm{d}s.
    \end{equation*}
\end{proposition}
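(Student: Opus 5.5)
The plan is to verify the formula directly rather than build it from an ansatz. First I will establish that the fundamental matrix $\mathbf{X}(t)$ is invertible for every $t$ and satisfies $\mathbf{X}'(t)=\mathbf{A}(t)\mathbf{X}(t)$. The second fact holds column by column, because each $\mathbf{x}_j$ solves the homogeneous problem. For invertibility, I will use Liouville's formula
\begin{equation*}
    \det\mathbf{X}(t)=\det\mathbf{X}(0)\exp\left(\int_0^t\operatorname{tr}\mathbf{A}(s)\,\mathrm{d}s\right).
\end{equation*}
Since the $\mathbf{x}_j$ are linearly independent as solutions, uniqueness for the homogeneous initial value problem shows that their values at $t=0$ are linearly independent vectors. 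Hence $\det\mathbf{X}(0)\neq0$, and Liouville's formula then gives $\det\mathbf{X}(t)\neq0$ for all $t$. Throughout I assume $\mathbf{A}$ and $\mathbf{f}$ are continuous (or locally integrable), so that existence and uniqueness hold on the interval of interest.

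Next I will motivate and then check the candidate formula. Substituting $\mathbf{x}(t)=\mathbf{X}(t)\mathbf{c}(t)$ into \eqref{eq:VarofParam1} gives
\begin{equation*}
    \mathbf{X}'\mathbf{c}+\mathbf{X}\mathbf{c}'=\mathbf{A}\mathbf{X}\mathbf{c}+\mathbf{f}.
\end{equation*}
Because $\mathbf{X}'=\mathbf{A}\mathbf{X}$, this reduces to $\mathbf{c}'(t)=\mathbf{X}(t)^{-1}\mathbf{f}(t)$. The initial condition forces $\mathbf{c}(0)=\mathbf{X}(0)^{-1}\mathbf{x}_0$, so integrating yields the stated expression.

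To make the argument rigorous, I will then differentiate the stated $\mathbf{x}(t)$ directly using the product rule and the fundamental theorem of calculus:
\begin{equation*}
    \mathbf{x}'(t)=\mathbf{A}(t)\mathbf{X}(t)\left[\mathbf{X}(0)^{-1}\mathbf{x}_0+\int_0^t\mathbf{X}(s)^{-1}\mathbf{f}(s)\,\mathrm{d}s\right]+\mathbf{X}(t)\mathbf{X}(t)^{-1}\mathbf{f}(t)=\mathbf{A}(t)\mathbf{x}(t)+\mathbf{f}(t).
\end{equation*}
Evaluating at $t=0$ gives $\mathbf{x}(0)=\mathbf{x}_0$, since the integral vanishes. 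Finally, the difference of any two solutions of \eqref{eq:VarofParam1} solves the homogeneous problem with zero initial data, so it vanishes identically by uniqueness. This shows that the formula gives \emph{the} solution, not merely a solution.

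The only real point of care is the invertibility of $\mathbf{X}(t)$ for all $t$, together with the regularity hypotheses, which the statement leaves vague as ``coefficient functions $\mathbf{A}$, $\mathbf{f}$.'' I will state explicitly that continuity of $\mathbf{A}$ and $\mathbf{f}$ is assumed. In that setting $\mathbf{X}(t)^{-1}$ is continuous, so the integrand $\mathbf{X}(s)^{-1}\mathbf{f}(s)$ is continuous and the fundamental theorem of calculus applies. For merely locally integrable data, the same computation holds in the absolutely continuous sense, with the ODE satisfied almost everywhere.
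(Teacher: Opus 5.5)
The paper states this proposition without proof, quoting it as a standard fact from linear ODE theory, so there is no argument in the paper to compare yours against. Your proof is the standard one and it is correct.

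The steps all hold: uniqueness for the homogeneous initial value problem makes $\mathbf{X}(0)$ invertible, and either Liouville's formula or the same uniqueness argument applied at any other time makes $\mathbf{X}(t)$ invertible. Direct differentiation verifies the formula, and uniqueness shows it gives \emph{the} solution. Stating the continuity (or Carath\'eodory local integrability) hypothesis on $\mathbf{A}$ and $\mathbf{f}$ explicitly also supplies what the statement leaves vague.
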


In Section~\ref{sec:Fourier}, we were able to define solutions to the original \gls{ode} of interest that were analytic in the upper and lower half-planes, respectively. It was these analyticity properties that inspired us to formulate an \gls{rhp}. We would like to replicate that process for~\eqref{eq:DiracSpatial}. To do this, we must investigate the analyticity of solutions. In order to construct solutions to~\eqref{eq:DiracSpatial} that are analytic in the upper and lower half-planes, we first need to make an appropriate choice of $\mathbf{M}(x;\lambda)$. Inspired by the associated inverse scattering theory~\cite{ablowitz_solitons_1991}, we specifically choose to use solutions normalized as $\exp(-i\lambda\sigma_3x)$ as $x\rightarrow\pm\infty$. 
Applying Proposition~\ref{prop:VarofParam} to~\eqref{eq:f=0}, with the fundamental matrix $\mathbf{X}(x;\lambda) = \exp(-i\lambda\sigma_3 x)$, treating $\mathbf{Q}(x)\mathbf{M}(x;\lambda)$ as the inhomogeneity, results in
\begin{equation*}
    \mathbf{M}(x;\lambda)=e^{-i\lambda\sigma_3x}\mathbf{D}(x_0;\lambda)+\int_{x_0}^xe^{-i\lambda\sigma_3(x-s)}\mathbf{Q}(s)\mathbf{M}(s;\lambda)\mathrm{d}s.
\end{equation*}
We enforce the normalization $\mathbf{M}\sim e^{-i\lambda\sigma_3 x}$ as $x\rightarrow\pm\infty$, corresponding to taking $x_0\rightarrow\pm\infty$ and $\mathbf{D}(x_0;\lambda)\rightarrow\mathbb{I}$. This leads to Volterra integral equations on $(-\infty,x)$ and $(\infty,x)$, namely 
\begin{equation*}
    \begin{cases}
        \mathbf{M}_1(x;\lambda)=e^{-i\lambda\sigma_3 x}+\int_{-\infty}^xe^{-i\lambda\sigma_3(x-s)}\mathbf{Q}(s)\mathbf{M}_1(s;\lambda)\mathrm{d}s,\\
        \mathbf{M}_2(x;\lambda)=e^{-i\lambda\sigma_3x}+\int_\infty^xe^{-i\lambda\sigma_3(x-s)}\mathbf{Q}(s)\mathbf{M}_2(s;\lambda)\mathrm{d}s.
    \end{cases}
\end{equation*}
Standard results \cite[Chapter II, §9, Theorem II]{lovitt_linear_1924} guarantee the existence and uniqueness of continuous solutions for $\mathbf{Q}\in L^1(\mathbb{R},\mathbb{C}^{2\times 2})$, see also Lemma~\ref{lemma:Jost}. We now split $\mathbf{M}_1$ and $\mathbf{M}_2$ up into columns,
\begin{align*}
    \mathbf{m}_{1,-}(x;\lambda)&=\begin{bmatrix}
        e^{-i\lambda x}\\0
    \end{bmatrix}+\int_{-\infty}^xe^{-i\lambda\sigma_3(x-s)}\mathbf{Q}(s)\mathbf{m}_{1,-}(s;\lambda)\mathrm{d}s,\\
    \mathbf{m}_{1,+}(x;\lambda)&=\begin{bmatrix}
        0\\e^{i\lambda x}
    \end{bmatrix}+\int_{-\infty}^xe^{-i\lambda\sigma_3(x-s)}\mathbf{Q}(s)\mathbf{m}_{1,+}(s;\lambda)\mathrm{d}s,\\
    \mathbf{m}_{2,-}(x;\lambda)&=\begin{bmatrix}
     e^{-i\lambda x}\\0
    \end{bmatrix}+\int_{\infty}^xe^{-i\lambda\sigma_3(x-s)}\mathbf{Q}(s)\mathbf{m}_{2,-}(s;\lambda)\mathrm{d}s,\\
    \mathbf{m}_{2,+}(x;\lambda)&=\begin{bmatrix}
     0\\e^{i\lambda x}
    \end{bmatrix}+\int_{\infty}^xe^{-i\lambda\sigma_3(x-s)}\mathbf{Q}(s)\mathbf{m}_{2,+}(s;\lambda)\mathrm{d}s,
\end{align*}
and analyze the analyticity of each of our defined solutions with respect to the spectral parameter $\lambda$. Consider the following Jost-like functions, $\mathbf{r}_{\ell,\pm}(x;\lambda)=\mathbf{m}_{\ell,\pm}(x;\lambda)e^{\mp i\lambda x}$, $\ell\in\{1,2\}$. The proof of the following is included in Appendix~\ref{sec:Proof}, see \cite[Lemma 3.4]{trogdon_riemannhilbert_2016} for similar arguments.

\begin{lemma}\label{lemma:Jost}
    Suppose $\vec P \in L^1(\mathbb R, \mathbb C^{2 \times 2})$ and $\vec K(x,s;\lambda) \in \mathbb C^{2\times 2}$, $s \leq x$, is analytic for $\lambda \in \mathbb C^+$.  Suppose further that $\vec K(x,s;\lambda)$ extends to be continuous  for $ \lambda \in \overline{\mathbb C^+}$ satisfying
    \begin{align*}
        \|\vec K(x,s;\lambda) \| \leq C_0,\quad s \leq x,~ \lambda \in \overline{\mathbb C^+},
    \end{align*}
    and
    \begin{align*}
        \|\partial_\lambda \mathbf{K}(x,s;\lambda)\| \le C_\lambda, \quad s \leq x,~\lambda \in \mathbb C^+.
    \end{align*}
    
    \begin{enumerate}
    \item For $\mathbf{f} \in L^\infty(
    \mathbb{R},
    \mathbb C^2)$ there is a unique solution $\mathbf{r}(\diamond;\lambda) \in L^\infty(
    \mathbb{R},\mathbb C^2)$ of the Volterra integral equation
    \begin{equation*}
        \mathbf{r}(x;\lambda)=\mathbf{f}(x)+\int_{-\infty}^x\mathbf{K}(x,s;\lambda)\mathbf{P}(s)\mathbf{r}(s;\lambda)\mathrm{d}s.
    \end{equation*}
    \item With $M = \|\vec P\|_{L^1(
    \mathbb{R},\mathbb C^{2\times 2})}$, we have
    \begin{equation*}
    \|\mathbf{r}(\diamond;\lambda)\|_{L^\infty(
    \mathbb{R},\mathbb C^2)} \le e^{C_0 M}, \quad \lambda \in \mathbb C^+.
    \end{equation*}
    \item For each fixed $x \in \mathbb R$, $\mathbf{r}(\diamond;\lambda)$ is analytic as a function of $\lambda \in \mathbb C^+$ and is continuous on $\overline{\mathbb C^+}$.
    \end{enumerate}
\end{lemma}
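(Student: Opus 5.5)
The plan is to construct the solution by Picard iteration (a Neumann series). The Volterra structure then gives factorial decay of the iterates, and analyticity passes to the limit through locally uniform convergence.

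\textbf{Iterates and existence.} Define the operator
\[
(\mathcal{K}_\lambda \mathbf{r})(x)=\int_{-\infty}^x\mathbf{K}(x,s;\lambda)\mathbf{P}(s)\mathbf{r}(s)\,\mathrm{d}s,
\]
and set $\mathbf{r}_0=\mathbf{f}$ and $\mathbf{r}_{n+1}=\mathcal{K}_\lambda\mathbf{r}_n$. Write $p(s)=\|\mathbf{P}(s)\|$ and $\rho(x)=\int_{-\infty}^x p(s)\,\mathrm{d}s\le M$. Induction on $n$ gives
\[
|\mathbf{r}_n(x)|\le \|\mathbf{f}\|_\infty\, \frac{(C_0\rho(x))^n}{n!},
\]
using $\int_{-\infty}^x p(s)\rho(s)^n\,\mathrm{d}s=\rho(x)^{n+1}/(n+1)$. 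Hence $\mathbf{r}=\sum_n\mathbf{r}_n$ converges uniformly in $x$, solves the equation, and satisfies
\[
\|\mathbf{r}\|_\infty\le \|\mathbf{f}\|_\infty e^{C_0 M}.
\]
This is item 2, under the normalization $\|\mathbf{f}\|_\infty\le 1$ (as for the Jost data $\mathbf{f}=\mathbf{e}_j$); in general the bound carries the factor $\|\mathbf{f}\|_\infty$.

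\textbf{Uniqueness.} If $\mathbf{w}\in L^\infty$ solves the homogeneous equation, set $g(x)=\sup_{y\le x}|\mathbf{w}(y)|$. Then $|\mathbf{w}(x)|\le C_0\int_{-\infty}^x p\,g$, and iterating gives
\[
|\mathbf{w}(x)|\le \|\mathbf{w}\|_\infty\,\frac{(C_0\rho(x))^n}{n!}\to 0,
\]
so $\mathbf{w}=0$. This is a Gronwall-type argument; it uses only integrability of $p$ and the fact that $\rho\to0$ at $-\infty$.

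\textbf{Analyticity and continuity (item 3).} Fix $x$. I would show by induction that each $\mathbf{r}_n(x;\lambda)$ is analytic in $\mathbb{C}^+$ and continuous on $\overline{\mathbb{C}^+}$. For continuity, take $\lambda_k\to\lambda$ in $\overline{\mathbb{C}^+}$. The integrand $\mathbf{K}(x,s;\lambda_k)\mathbf{P}(s)\mathbf{r}_n(s;\lambda_k)$ converges pointwise in $s$ by the continuity hypothesis and the inductive hypothesis. It is dominated by $C_0 p(s)\|\mathbf{f}\|_\infty (C_0M)^n/n!$, so dominated convergence applies. For analyticity, the cleanest route avoids differentiating under the integral: by Fubini, the contour integral of $\mathbf{r}_{n+1}(x;\diamond)$ over any closed triangle in $\mathbb{C}^+$ equals
\[
\int_{-\infty}^x \oint \mathbf{K}(x,s;\lambda)\mathbf{P}(s)\mathbf{r}_n(s;\lambda)\,\mathrm{d}\lambda\,\mathrm{d}s .
\]
The inner integral vanishes because its integrand is analytic in $\lambda$, and Morera's theorem then gives analyticity of $\mathbf{r}_{n+1}$. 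The hypothesis $\|\partial_\lambda\mathbf{K}\|\le C_\lambda$ offers an alternative: differentiate under the integral sign, with domination locally uniform in $\lambda$. Finally, the series converges uniformly in $\lambda\in\overline{\mathbb{C}^+}$ thanks to the bound $(C_0M)^n/n!$. Uniform limits preserve both continuity and analyticity (Weierstrass), which gives item 3.

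\textbf{Main obstacle.} The delicate point is measurability and joint regularity: I need $s\mapsto \mathbf{r}_n(s;\lambda)$ to be measurable and $(s,\lambda)\mapsto$ integrand jointly measurable so that Fubini applies. I would handle this by proving inductively that $\mathbf{r}_n$ is jointly continuous in $\lambda$ and measurable in $x$. This holds provided $\mathbf{K}$ is jointly measurable, which is implicitly assumed and is certainly true in the intended application $\mathbf{K}=\mathrm{diag}(1,e^{2i\lambda(x-s)})$ and its variants.
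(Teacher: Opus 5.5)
Your proposal is correct and follows essentially the same route as the paper's proof. Both use a Neumann series with the factorial bound $(C_0M)^n/n!$, dominated convergence for continuity of each iterate, and uniform convergence in $\lambda$ to pass analyticity and continuity to the limit. The differences are small refinements rather than a different approach: your explicit uniqueness step, your Morera/Fubini argument for the iterates (which makes the $\partial_\lambda\mathbf{K}$ hypothesis unnecessary), and your correct remark that item 2 should carry a factor $\|\mathbf{f}\|_{L^\infty}$, which the paper's own estimate contains but then drops.
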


We observe that Lemma~\ref{lemma:Jost} immediately applies to $\vec r_{1,-}$ and $\vec r_{2,+}$. It then applies to $\mathbf{r}_{2,-}$ and $\mathbf{r}_{1,+}$ after using $x \to -x$, $s \to -s$.  We arrive at the following.
\begin{corollary}\label{cor:Jost}
    The matrix functions
    \begin{align*}
    \mathbf{R}_+(x;\lambda)&=\begin{bmatrix}
    \mathbf{r}_{1,-}(x;\lambda) & \mathbf{r}_{2,+}(x;\lambda)
\end{bmatrix}, \quad (x,\lambda) \in  \mathbb R \times \mathbb C^+,\\
    \mathbf{R}_-(x;\lambda)&=\begin{bmatrix}
    \mathbf{r}_{2,-}(x;\lambda) & \mathbf{r}_{1,+}(x;\lambda)
\end{bmatrix}, \quad (x,\lambda) \in  \mathbb R \times\mathbb C^-,
\end{align*}
are uniformly bounded and analytic on their domains of definition and extend continuously up to the real axis.
\end{corollary}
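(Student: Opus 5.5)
The plan is to rewrite each of the four Volterra equations for $\mathbf{m}_{\ell,\pm}$ as an equation for $\mathbf{r}_{\ell,\pm}=\mathbf{m}_{\ell,\pm}e^{\mp i\lambda x}$. We then identify the kernel $\mathbf{K}$ and check the hypotheses of Lemma~\ref{lemma:Jost} with $\mathbf{P}=\mathbf{Q}\in L^1(\mathbb{R},\mathbb{C}^{2\times 2})$, which holds since $q$ is Schwartz.

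\textbf{The column $\mathbf{r}_{1,-}$.} Multiplying the equation for $\mathbf{m}_{1,-}$ by $e^{i\lambda x}$ gives
\begin{equation*}
\mathbf{r}_{1,-}(x;\lambda)=\begin{bmatrix}1\\0\end{bmatrix}+\int_{-\infty}^x e^{-i\lambda(\sigma_3-\mathbb{I})(x-s)}\mathbf{Q}(s)\mathbf{r}_{1,-}(s;\lambda)\,\mathrm{d}s .
\end{equation*}
So $\mathbf{K}(x,s;\lambda)=\mathrm{diag}(1,e^{2i\lambda(x-s)})$. For $s\le x$ and $\lambda\in\overline{\mathbb{C}^+}$ we have $|e^{2i\lambda(x-s)}|=e^{-2\,\mathrm{Im}\lambda\,(x-s)}\le 1$, so $C_0=1$. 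The kernel is entire in $\lambda$ and continuous up to the axis. Its derivative is $2i(x-s)e^{2i\lambda(x-s)}$. Since $te^{-2\eta t}\le 1/(2e\eta)$ for $t\ge0$, the derivative is bounded by a constant $C_\lambda$ depending only on $\mathrm{Im}\lambda>0$. This is exactly the form the lemma allows, since $C_\lambda$ may depend on $\lambda$. With $\mathbf{f}=(1,0)^T\in L^\infty$, the lemma yields a unique bounded solution with $\|\mathbf{r}_{1,-}\|_\infty\le e^{\|\mathbf{Q}\|_{L^1}}$, independent of $\lambda$. It is analytic in $\mathbb{C}^+$ and continuous on $\overline{\mathbb{C}^+}$.

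\textbf{The column $\mathbf{r}_{2,+}$.} Multiplying by $e^{-i\lambda x}$ gives
\begin{equation*}
\mathbf{r}_{2,+}(x;\lambda)=\begin{bmatrix}0\\1\end{bmatrix}-\int_x^{\infty}\mathrm{diag}\big(e^{-2i\lambda(x-s)},1\big)\mathbf{Q}(s)\mathbf{r}_{2,+}(s;\lambda)\,\mathrm{d}s .
\end{equation*}
For $s\ge x$, $|e^{-2i\lambda(x-s)}|=e^{-2\,\mathrm{Im}\lambda\,(s-x)}\le1$ when $\mathrm{Im}\lambda\ge0$. Substituting $x\to-x$, $s\to-s$ turns this into an integral over $(-\infty,x)$. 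The new data are $\tilde{\mathbf{P}}(s)=-\mathbf{Q}(-s)$, still in $L^1$, and kernel $\mathrm{diag}(e^{2i\lambda(x-s)},1)$ with $s\le x$. The same bounds as before apply, so the lemma gives the conclusions in $\overline{\mathbb{C}^+}$.

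\textbf{The lower half-plane.} For $\mathbf{r}_{2,-}$ and $\mathbf{r}_{1,+}$ the exponentials are $e^{2i\lambda(s-x)}$ with $s\ge x$, respectively $e^{-2i\lambda(x-s)}$ with $s\le x$. These are bounded for $\mathrm{Im}\lambda\le0$. The reflection $x\to-x$ (for $\mathbf{r}_{2,-}$) together with $\lambda\to\bar\lambda$, or equivalently $\lambda\to-\lambda$, maps them onto the upper half-plane setting. To stay within the lemma as stated, I would apply it to $\tilde{\mathbf{r}}(x;\lambda):=\mathbf{r}(x;-\lambda)$ for $\lambda\in\mathbb{C}^+$. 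Analyticity and continuity then transfer back because $\lambda\mapsto-\lambda$ is a holomorphic bijection $\mathbb{C}^+\to\mathbb{C}^-$.

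\textbf{Assembling the matrices.} Forming $\mathbf{R}_\pm$ column by column, uniform boundedness, analyticity and continuity up to $\mathbb{R}$ all hold columnwise, hence for the matrices. I expect the main obstacle to be bookkeeping rather than analysis. One must choose the correct reflection or sign flip in $\lambda$ for each column so that the kernel really satisfies $s\le x$, is analytic in $\mathbb{C}^+$, and has $C_0=1$. One must also note that $C_\lambda$ may blow up as $\mathrm{Im}\lambda\to0$. That blow-up is harmless, because continuity on the closure is part of the lemma's conclusion and does not rely on the derivative bound.
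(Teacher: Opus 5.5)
Your proposal is correct and takes the paper's route of reducing every column to Lemma~\ref{lemma:Jost} by a change of variables. Your bookkeeping, with the reflection $x\to-x$, $s\to-s$ for $\mathbf{r}_{2,\pm}$ and the flip $\lambda\mapsto-\lambda$ for the lower-half-plane columns, is more explicit than the paper's one-line remark, which glosses over the fact that $\mathbf{r}_{2,+}$ already needs the reflection and that $\mathbf{r}_{1,+}$, $\mathbf{r}_{2,-}$ need a flip in $\lambda$.

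Two small slips:
\begin{itemize}
\item The exponential in the $\mathbf{r}_{2,-}$ kernel is $e^{2i\lambda(x-s)}$ for $s\ge x$. As you wrote it, $e^{2i\lambda(s-x)}$ would be bounded in $\mathbb{C}^+$, not $\mathbb{C}^-$.
\item $\lambda\mapsto\bar\lambda$ is not ``equivalent'' to $\lambda\mapsto-\lambda$, since it is anti-holomorphic. You do correctly end up using the latter.
\end{itemize}
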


We now state the following lemma.  The proof is also found in Appendix~\ref{sec:Proof}.
\begin{lemma}\label{lemma:JostDeriv}
    Suppose $\mathbf{Q}\in L^1(\mathbb{R},\mathbb{C}^{2\times 2})\cap L^\infty(\mathbb{R},\mathbb{C}^{2\times 2})$ and $\partial_x\mathbf{Q}\in L^1(\mathbb{R},\mathbb{C}^{2\times 2})$. Then there exist constants $C_j$, $j = 1,2,3,4$, such that
    \begin{equation*}
        \|\partial_x\mathbf{r}_{1,-}(x;\lambda)\|\leq C_1,\quad\|\partial_x\mathbf{r}_{2,+}(x;\lambda)\|\leq C_2,\quad (x,\lambda)\in \mathbb R \times \mathbb{C}^+,
    \end{equation*}
    and
    \begin{equation*}
        \|\partial_x\mathbf{r}_{2,-}(x;\lambda)\|\leq C_3,\quad \|\partial_x\mathbf{r}_{1,+}(x;\lambda)\| \leq C_4,\quad(x,\lambda)\in \mathbb R \times \mathbb{C}^-.
    \end{equation*}
\end{lemma}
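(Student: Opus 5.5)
We will bound $\partial_x \mathbf{r}_{1,-}$ on $\mathbb{R}\times\mathbb{C}^+$; the other three estimates follow the same way, and by the reflection $x\to -x$, $s\to -s$ already used for Corollary~\ref{cor:Jost}.

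\emph{Step 1: the integral equation.} Writing $\mathbf{r}_{1,-}=\mathbf{m}_{1,-}e^{i\lambda x}$, the Volterra equation becomes
\begin{equation*}
\mathbf{r}_{1,-}(x;\lambda)=\begin{bmatrix}1\\0\end{bmatrix}+\int_{-\infty}^x e^{-i\lambda(\sigma_3-\mathbb{I})(x-s)}\mathbf{Q}(s)\mathbf{r}_{1,-}(s;\lambda)\,\mathrm{d}s.
\end{equation*}
Here $e^{-i\lambda(\sigma_3-\mathbb{I})t}=\mathrm{diag}(1,e^{2i\lambda t})$, which is bounded by $1$ for $t\ge0$ and $\lambda\in\overline{\mathbb{C}^+}$.

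\emph{Step 2: differentiate the ODE.} Differentiating the integral equation directly produces a term $2i\lambda\int\cdots$, which is not uniformly bounded in $\lambda$. We therefore work with the ODE satisfied by $\mathbf{r}_{1,-}$:
\begin{equation*}
\partial_x\mathbf{r}_{1,-}=-i\lambda(\sigma_3-\mathbb{I})\mathbf{r}_{1,-}+\mathbf{Q}\mathbf{r}_{1,-}.
\end{equation*}
Since $\mathbf{Q}\in L^\infty$ and, by Lemma~\ref{lemma:Jost}, $\|\mathbf{r}_{1,-}\|_{L^\infty}\le e^{M}$, the term $\mathbf{Q}\mathbf{r}_{1,-}$ is bounded. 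The first component of $-i\lambda(\sigma_3-\mathbb{I})\mathbf{r}_{1,-}$ vanishes. Its second component is $2i\lambda r^{(2)}$, where $r^{(2)}$ denotes the second component of $\mathbf{r}_{1,-}$. The problem thus reduces to bounding $\lambda r^{(2)}(x;\lambda)$ uniformly.

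\emph{Step 3: bound $\lambda r^{(2)}$ by integrating by parts.} The integral equation gives
\begin{equation*}
r^{(2)}(x;\lambda)=\int_{-\infty}^x e^{2i\lambda(x-s)}g(s;\lambda)\,\mathrm{d}s,\qquad g=\tau\bar q\, r^{(1)}.
\end{equation*}
Integrate by parts in $s$, using $\partial_s e^{2i\lambda(x-s)}=-2i\lambda e^{2i\lambda(x-s)}$. Because $\tau\bar q\in L^1$ vanishes at $-\infty$ (it lies in $W^{1,1}$), the boundary term at $-\infty$ is zero, and
\begin{equation*}
2i\lambda r^{(2)}(x;\lambda)=-g(x;\lambda)+\int_{-\infty}^x e^{2i\lambda(x-s)}\partial_s g(s;\lambda)\,\mathrm{d}s.
\end{equation*}
The term $g$ is bounded by $\|q\|_{L^\infty}e^{M}$. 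Next, $\partial_s g=\tau\bar q' r^{(1)}+\tau\bar q\,\partial_s r^{(1)}$. The first component of the ODE gives $\partial_s r^{(1)}=q\, r^{(2)}$, with no $\lambda$ appearing. Hence $|\partial_s g|\le e^{M}(|q'|+|q|^2)$, which lies in $L^1$ because $q'\in L^1$ and $q\in L^1\cap L^\infty$. Together with $|e^{2i\lambda(x-s)}|\le1$, this yields
\begin{equation*}
|\lambda r^{(2)}|\le C\bigl(\|q\|_\infty+\|q'\|_1+\|q\|_1\|q\|_\infty\bigr)e^{M}.
\end{equation*}

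\emph{Step 4: conclude.} Combining Steps 2 and 3 gives the constant $C_1$. The same argument, applied to $\mathbf{r}_{2,+}$ and to the reflected functions on $\mathbb{C}^-$, gives $C_2$, $C_3$ and $C_4$.

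The main obstacle is this cancellation of the factor $\lambda$ in Step 3. It requires two things: the boundary term at $-\infty$ must vanish, and $\partial_s r^{(1)}$ must be $\lambda$-free. Both are justified by the $W^{1,1}$ assumption on $\mathbf{Q}$ and by the diagonal structure of $\sigma_3-\mathbb{I}$.
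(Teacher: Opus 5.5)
Your proposal is correct and follows essentially the paper's route: integrate by parts in the Volterra equation for $r^{(2)}$, use $\partial_s r^{(1)}=q\,r^{(2)}$ so that $\partial_s g$ is $\lambda$-free and in $L^1$, and substitute into $\partial_x r^{(2)}=\tau\bar q\, r^{(1)}+2i\lambda r^{(2)}$. The only difference is cosmetic. The paper notes the exact cancellation of $\tau\bar q\, r^{(1)}$, which gives $\partial_x r^{(2)}=\int_{-\infty}^x e^{2i\lambda(x-s)}\partial_s g\,\mathrm{d}s$, whereas you bound the two terms separately with the triangle inequality. That is equally valid and only yields a slightly larger constant.
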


\begin{remark}
Lemma~\ref{lemma:JostDeriv} could be strengthened to show that $\partial_x\mathbf{r}_{\ell,\pm}=\mathcal{O}(1/|\lambda|)$, $\ell\in{1,2}$, $|\lambda|\rightarrow\infty$, for $\lambda$ in the appropriate half-plane. One way to obtain this estimate is through an additional integration by parts, which requires the stronger assumption that $\partial_x^2\mathbf{Q}\in L^1(\mathbb{R},\mathbb{C}^{2\times2})$. 
\end{remark}
Note that in the proof of Lemma~\ref{lemma:JostDeriv}, we showed that
\begin{equation*}
    \mathbf{r}_{\ell,+}(x;\lambda)\rightarrow\begin{bmatrix}
        0\\1
    \end{bmatrix}+\mathcal{O}\left(\frac{1}{|\lambda|}\right)\quad\text{and}\quad \mathbf{r}_{\ell,-}(x;\lambda)\rightarrow\begin{bmatrix}
        1\\
        0
    \end{bmatrix}+\mathcal{O}\left(\frac{1}{|\lambda|}\right),\quad\ell\in\{1,2\},
\end{equation*}
as $|\lambda|\rightarrow\infty$ in the appropriate half-plane.

We have defined two solutions, $\mathbf{M}_1(x;\lambda)$ and $\mathbf{M}_2(x;\lambda)$. Liouville's formula~\cite{chicone_ordinary_2006} implies that the determinants of these solutions are constant in $x$. Since the determinants are nonzero at $\pm\infty$, the columns of these matrix solutions define a linearly independent set which spans the solution space. Thus, the members of one set can be written as a linear combination of the members of the other set. This fact is captured by the scattering relation
\begin{equation*}
    \mathbf{M}_1(x;\lambda)=\mathbf{M}_2(x;\lambda)\mathbf{S}(\lambda), \quad \mathbf{S}(\lambda)=\begin{bmatrix}
        a(\lambda)&B(\lambda)\\
        b(\lambda)&A(\lambda)
    \end{bmatrix},
\end{equation*}
where the elements of $\mathbf{S}(\lambda)$ are known as the scattering coefficients. Let $\mathbf{M}_1(x;\lambda)=\begin{bmatrix}
    \mathbf{m}_{1,-}(x;\lambda) & \mathbf{m}_{1,+}(x;\lambda)
\end{bmatrix}$ and $\mathbf{M}_2(x;\lambda)=\begin{bmatrix}
    \mathbf{m}_{2,-}(x;\lambda) & \mathbf{m}_{2,+}(x;\lambda)
\end{bmatrix}$. Then
\begin{equation*}
    \begin{cases}
        \mathbf{m}_{1,-}(x;\lambda)=a(\lambda)\mathbf{m}_{2,-}(x;\lambda)+b(\lambda)\mathbf{m}_{2,+}(x;\lambda),\\
        \mathbf{m}_{1,+}(x;\lambda)=B(\lambda)\mathbf{m}_{2,-}(x;\lambda)+A(\lambda)\mathbf{m}_{2,+}(x;\lambda).
    \end{cases}
\end{equation*}
The scattering data can be found by taking Wronskians of the scattering relations with respect to $\mathbf{m}_{\ell,\pm}(x;\lambda)$, $\ell\in\{1,2\}$. We find that
\begin{align*}
        a(\lambda)&=\frac{\text{W}(\mathbf{m}_{1,-}(x;\lambda),\mathbf{m}_{2,+}(x;\lambda))}{\text{W}(\mathbf{m}_{2,-}(x;\lambda),\mathbf{m}_{2,+}(x;\lambda))},\quad B(\lambda)=\frac{\text{W}(\mathbf{m}_{1,+}(x;\lambda),\mathbf{m}_{2,+}(x;\lambda))}{\text{W}(\mathbf{m}_{2,-}(x;\lambda),\mathbf{m}_{2,+}(x;\lambda))}, \\
        b(\lambda)&=\frac{\text{W}(\mathbf{m}_{1,-}(x;\lambda),\mathbf{m}_{2,-}(x;\lambda))}{\text{W}(\mathbf{m}_{2,+}(x;\lambda),\mathbf{m}_{2,-}(x;\lambda))},\quad A(\lambda)=\frac{\text{W}(\mathbf{m}_{1,+}(x;\lambda),\mathbf{m}_{2,-}(x;\lambda))}{\text{W}(\mathbf{m}_{2,+}(x;\lambda),\mathbf{m}_{2,-}(x;\lambda))}.
\end{align*}
By Liouville’s formula, $\text{W}(\mathbf{m}_{2,+}(x;\lambda),\mathbf{m}_{2,-}(x;\lambda))=\text{det}\left(\begin{bmatrix}
    \mathbf{m}_{2,+}(x;\lambda) & \mathbf{m}_{2,-}(x;\lambda)
\end{bmatrix}\right)=-1$, and \newline $\text{W}(\mathbf{m}_{2,-}(x;\lambda),\mathbf{m}_{2,+}(x;\lambda))=1$. The scattering data can then be simplified to
\begin{align*}
    a(\lambda)&=\text{W}(\mathbf{m}_{1,-}(x;\lambda),\mathbf{m}_{2,+}(x;\lambda)),\quad B(\lambda) = \text{W}(\mathbf{m}_{1,+}(x;\lambda),\mathbf{m}_{2,+}(x;\lambda)),\\
    b(\lambda)&=-\text{W}(\mathbf{m}_{1,-}(x;\lambda),\mathbf{m}_{2,-}(x;\lambda)), \quad A(\lambda)=-\text{W}(\mathbf{m}_{1,+}(x;\lambda),\mathbf{m}_{2,-}(x;\lambda)).
\end{align*}
By the analyticity of $\mathbf{m}_{\ell,\pm}(x;\lambda)$, $\ell\in\{1,2\}$, $a(\lambda)$ extends analytically to $\mathbb{C}^+$ and $A(\lambda)$ to $\mathbb{C}^-$. Using the relation $\mathbf{m}_{\ell,\pm}(x;\lambda)=\mathbf{r}_{\ell,\pm}(x;\lambda)e^{\mp i\lambda x}$ together with the large-$|\lambda|$ limits of the Jost solutions in the appropriate half-planes, we obtain
\begin{align*}
    \mathbf{m}_{1,-}(x;\lambda) = e^{-i\lambda x}\left(\begin{bmatrix}1\\0\end{bmatrix}+\mathcal{O}\left(\frac{1}{|\lambda|}\right)\right),\quad
    \mathbf{m}_{2,+}(x;\lambda) = e^{i\lambda x}\left(\begin{bmatrix}0\\1\end{bmatrix}+\mathcal{O}\left(\frac{1}{|\lambda|}\right)\right),
\end{align*}
as $|\lambda|\to\infty$. Substituting these expressions into the Wronskian representation of the scattering coefficients and using the bilinearity of the Wronskian together with cancellation of the exponential factors, we find
\begin{equation*}
    a(\lambda)
    =
    1 + \mathcal{O}\left(\frac{1}{|\lambda|}\right),
\end{equation*}
as $|\lambda|\to\infty$ with $\lambda\in\overline{\mathbb{C}^+}$. It can be shown similarly that $A(\lambda)\to 1$ as $|\lambda|\to\infty$ with $\lambda\in\overline{\mathbb{C}^-}$.

Since the spectrum of $\mathcal{T}$ lies on the real axis, we seek solutions to~\eqref{eq:DiracSpatial} that are analytic in the upper and lower half-planes, respectively. To do this, we use $\mathbf{m}_{\ell,\pm}(x;\lambda)$, $\ell\in\{1,2\}$, to build solutions that are analytic in the appropriate half-plane. We define
\begin{align*}
    \mathbf{M}_+(x;\lambda) &:= \begin{bmatrix}
        \mathbf{m}_{1,-}(x;\lambda) & \mathbf{m}_{2,+}(x;\lambda)
    \end{bmatrix}=\begin{bmatrix}
        m_{1,-}^{(1)}(x;\lambda) & m_{2,+}^{(1)}(x;\lambda)\\
        m_{1,-}^{(2)}(x;\lambda) & m_{2,+}^{(2)}(x;\lambda)
    \end{bmatrix},\\
    \mathbf{M}_-(x;\lambda) &:= \begin{bmatrix}
        \mathbf{m}_{2,-}(x;\lambda) & \mathbf{m}_{1,+}(x;\lambda)
    \end{bmatrix}=\begin{bmatrix}
        m_{2,-}^{(1)}(x;\lambda) & m_{1,+}^{(1)}(x;\lambda)\\
        m_{2,-}^{(2)}(x;\lambda) & m_{1,+}^{(2)}(x;\lambda)
    \end{bmatrix},
\end{align*}
where $m_{j,\pm}^{(n)}$, $n = 1,2$, denotes the two elements of $\mathbf{m}_{j,\pm}$. To solve~\eqref{eq:DiracSpatial}, we apply Proposition~\ref{prop:VarofParam} again, using $\mathbf{M}_\pm(x;\lambda)$ as our homogeneous solution. This results in
\begin{equation}\label{eq:VarofParam}
    \mathbf{v}_\pm(x;\lambda) = \mathbf{M}_\pm(x;\lambda) \mathbf{a}_\pm(\lambda) + 
    \mathbf{M}_\pm(x;\lambda) \int_{x_1}^x \mathbf{M}_\pm(s;\lambda)^{-1} \mathbf{f}(s) \, \mathrm{d}s, 
    \quad \mathbf{a}_\pm(\lambda) \in \mathbb{C}^{2\times 1}.
\end{equation}
It is often convenient to write $\mathbf{v}_\pm(x;\lambda)$ in terms of the Jost functions since they are bounded uniformly in $x$ and $\lambda$. Recall that $\mathbf{R}_+(x;\lambda)=\begin{bmatrix}
    \mathbf{r}_{1,-}(x;\lambda) & \mathbf{r}_{2,+}(x;\lambda)
\end{bmatrix}=\mathbf{M}_+(x;\lambda)e^{i\lambda x\sigma_3}$. A uniform bound is established in Corollary~\ref{cor:Jost}.

Rewriting~\eqref{eq:VarofParam} in terms of the Jost functions
and choosing $x_1=-\infty$ results in 
\begin{align*}
    \mathbf{v}_+(x;\lambda)&=\mathbf{R}_+(x;\lambda)e^{-i\lambda x\sigma_3}\mathbf{a}_+(\lambda)+\mathbf{R}_+(x;\lambda)\int_{-\infty}^x e^{-i\lambda(x-s)\sigma_3}\mathbf{R}_+(x;\lambda)^{-1}\mathbf{f}(s)\mathrm{d}s\\
    &=\mathbf{R}_+(x;\lambda)\left(\begin{bmatrix}
        e^{-i\lambda x}a^{(1)}(\lambda)\\
        e^{i\lambda x}a^{(2)}(\lambda)
    \end{bmatrix}+\frac{1}{a(\lambda)}\begin{bmatrix}
        \int_{-\infty}^x e^{-i\lambda(x-s)}\begin{bmatrix}
            r_{2,+}^{(2)}(s;\lambda) & -r_{2,+}^{(1)}(s;\lambda)
        \end{bmatrix}\mathbf{f}(s)\mathrm{d}s\\
        \int_{-\infty}^x e^{i\lambda(x-s)}\begin{bmatrix}
            -r_{1,-}^{(2)}(s;\lambda) & r_{1,-}^{(1)}(s;\lambda)
        \end{bmatrix}\mathbf{f}(s)\mathrm{d}s
    \end{bmatrix}\right).
\end{align*}
We choose $\mathbf{a}_+(\lambda)$ to ensure analyticity for $\lambda\in\mathbb{C}^+$. This construction is summarized in Lemma~\ref{lemma:analyticity}. 

\begin{lemma}\label{lemma:analyticity}
Let $\mathbf{f}\in\mathbb{C}^{2\times 1}$ be a vector-valued Schwartz-class function and $\mathbf{Q}\in L^\infty(\mathbb{R},\mathbb{C}^{2\times 2})$. Define
\begin{align*}
    \mathbf{a}_+(\lambda)&=\begin{bmatrix}
        -\frac{1}{a(\lambda)}\int_{-\infty}^\infty \begin{bmatrix}
            m_{2,+}^{(2)}(s;\lambda) & -m_{2,+}^{(1)}(s;\lambda)
        \end{bmatrix}\mathbf{f}(s)\mathrm{d}s\\
        0
    \end{bmatrix},\\
    \mathbf{a}_-(\lambda)&=\begin{bmatrix}
        0\\
        -\frac{1}{A(\lambda)}\int_{-\infty}^\infty \begin{bmatrix}
            -m_{2,-}^{(2)}(s;\lambda) & m_{2,-}^{(1)}(s;\lambda)
        \end{bmatrix}\mathbf{f}(s)\mathrm{d}s
    \end{bmatrix}.
\end{align*}
Then~\eqref{eq:VarofParam} is analtyic for $\lambda\in\mathbb{C}^\pm \setminus \sigma(\mathcal T)$ and satisfies~\eqref{eq:DiracSpatial}, and
\[
\mathbf{v}_\pm(x;\lambda),\partial_x \mathbf{v}_\pm(x;\lambda) \in L^2(\mathbb{R},\mathbb{C}^2), \quad \lambda\in\mathbb{C}^\pm \setminus \sigma(\mathcal T).
\]
Therefore,
\[
\mathbf{v}_\pm(x;\lambda) = (\mathcal{T}-\lambda)^{-1} (i \sigma_3 \mathbf{f})(x), \quad \lambda\in\mathbb{C}^\pm \setminus \sigma(\mathcal T).
\]
\end{lemma}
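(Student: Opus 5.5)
We will substitute the choice of $\mathbf{a}_+$ into the Jost-function form of $\mathbf{v}_+$ displayed just before the lemma; the case of $\mathbf{v}_-$ is symmetric. The first component combines the $\mathbf{a}_+$ term, $-\frac{1}{a}\int_{-\infty}^\infty$, with the variation-of-parameters term, $\frac{1}{a}\int_{-\infty}^x$. Using $\mathbf{m}_{2,+}=\mathbf{r}_{2,+}e^{i\lambda s}$, the two pieces merge into $-\frac{1}{a(\lambda)}\int_x^{\infty}e^{-i\lambda(x-s)}[r_{2,+}^{(2)}\ \ {-r_{2,+}^{(1)}}](s;\lambda)\mathbf{f}(s)\,\mathrm{d}s$. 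The second component is unchanged, $\frac{1}{a(\lambda)}\int_{-\infty}^x e^{i\lambda(x-s)}[-r_{1,-}^{(2)}\ \ r_{1,-}^{(1)}](s;\lambda)\mathbf{f}(s)\,\mathrm{d}s$. For $\operatorname{Im}\lambda>0$ both kernels carry decaying exponentials: $|e^{-i\lambda(x-s)}|=e^{\eta(x-s)}\le 1$ for $s\ge x$, and $|e^{i\lambda(x-s)}|=e^{-\eta(x-s)}\le1$ for $s\le x$. So $\mathbf{v}_+(x;\lambda)=\mathbf{R}_+(x;\lambda)\,\mathbf{w}(x;\lambda)/a(\lambda)$, where $\mathbf{w}$ is a pair of convolution-type integrals with Green's kernels $K_\eta$ as in Section~\ref{sec:Fourier}.

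\textbf{Analyticity.} By Corollary~\ref{cor:Jost}, $\mathbf{R}_+$ is analytic and uniformly bounded in $\mathbb{C}^+$, and $a$ is analytic there because it is a Wronskian of analytic columns. The integrands of $\mathbf{w}$ are analytic in $\lambda$ and dominated locally uniformly by $\|\mathbf{R}_+\|_\infty|\mathbf{f}(s)|\in L^1$, so Morera/Fubini (or differentiation under the integral) gives analyticity of $\mathbf{w}$. The only possible singularities are zeros of $a$. These are the points where $\mathbf{m}_{1,-}$ and $\mathbf{m}_{2,+}$ are dependent, i.e.\ the $L^2$ eigenfunctions, hence the eigenvalues of $\mathcal{T}$; we exclude them as $\sigma(\mathcal{T})$. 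That $\mathbf{v}_+$ satisfies \eqref{eq:DiracSpatial} is immediate from Proposition~\ref{prop:VarofParam}, since adding $\mathbf{M}_+\mathbf{a}_+$ with constant $\mathbf{a}_+$ adds a homogeneous solution.

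\textbf{$L^2$ bounds.} Using $\|\mathbf{R}_+\|_\infty<\infty$ (Lemma~\ref{lemma:Jost}, part 2), we get $|\mathbf{v}_+(x;\lambda)|\le C|a(\lambda)|^{-1}\bigl((\tilde K_\eta*|\mathbf{f}|)(x)+(K_\eta*|\mathbf{f}|)(x)\bigr)$, where $\tilde K_\eta(t)=K_\eta(-t)$. Young's inequality then gives $\|\mathbf{v}_+\|_{L^2}\le 2C\|\mathbf{f}\|_{L^2}/(|a(\lambda)|\eta)$, exactly as in Section~\ref{sec:Fourier}. For the derivative we will not use Lemma~\ref{lemma:JostDeriv}, since it needs $\partial_x\mathbf{Q}$. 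Instead we use the equation itself: $\partial_x\mathbf{v}_+=\mathbf{f}-i\lambda\sigma_3\mathbf{v}_++\mathbf{Q}\mathbf{v}_+$, which lies in $L^2$ because $\mathbf{f}\in L^2$, $\mathbf{v}_+\in L^2$ and $\mathbf{Q}\in L^\infty$. Thus $\mathbf{v}_+\in H^1=D(\mathcal{T})$. Here $D(\mathcal{T})=D(\mathcal{A})$, since $\mathcal{B}$ is bounded.

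\textbf{Identification with the resolvent.} Rearranging, $(\mathcal{T}-\lambda)\mathbf{v}_+=i\sigma_3\mathbf{f}$ with $\mathbf{v}_+\in D(\mathcal{T})$. For $\lambda\in\rho(\mathcal{T})$, in particular all of $\mathbb{C}^+$ when $\tau=1$ by self-adjointness, $\mathcal{T}-\lambda$ is injective, so $\mathbf{v}_+=(\mathcal{T}-\lambda)^{-1}(i\sigma_3\mathbf{f})$. The main obstacle is the handling of zeros of $a$ together with the exact meaning of ``$\mathbb{C}^\pm\setminus\sigma(\mathcal{T})$'' when $\tau=-1$. There we must verify that $a(\lambda)=0$ in $\mathbb{C}^+$ precisely when $\lambda$ is an eigenvalue, and conversely that $a(\lambda)\ne0$ yields a bounded inverse. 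The latter follows from the constructed $\mathbf{v}_+$ being a bounded solution operator, with the $L^2$ bound above, plus injectivity. Injectivity holds because a nonzero $L^2$ homogeneous solution must decay at both ends, forcing it to be a multiple of $\mathbf{m}_{1,-}$ and of $\mathbf{m}_{2,+}$ simultaneously, so $a=0$.
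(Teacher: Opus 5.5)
Your proposal is correct and follows essentially the same route as the paper. You merge the $\mathbf a_+$ term into the Jost-function representation \eqref{eq:Jost}, get analyticity away from the zeros of $a$ (which give $L^2$ eigenfunctions and so lie in $\sigma(\mathcal T)$), bound $\mathbf v_+$ in $L^2$ via Young's inequality with $K_\eta$, get $\partial_x\mathbf v_+\in L^2$ from the ODE, and conclude by injectivity of $\mathcal T-\lambda$; your argument for $\{a=0\}\cap\mathbb C^+=\sigma(\mathcal T)\cap\mathbb C^+$ is more explicit than the paper's one-line assertion, though more than the lemma strictly needs. One point to tighten, which you share with the paper: for $\operatorname{Im}\lambda>0$, $\mathbf m_{2,+}(s;\lambda)$ grows exponentially as $s\to-\infty$ (e.g.\ $\mathbf m_{2,+}=e^{i\lambda s}\mathbf e_2$ when $\mathbf Q=0$), so the separate integrals $\int_{-\infty}^{\infty}$ in $\mathbf a_+$ and $\int_{-\infty}^{x}$ in \eqref{eq:VarofParam} need not converge for a general Schwartz $\mathbf f$, and the merged $\int_x^{\infty}$ form should be taken as the definition of $\mathbf v_+$.
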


\begin{proof}

 In this proof, we will concern ourselves with $\mathbf{v}_+(x;\lambda)$.  Analogous arguments can be made for $\mathbf{v}_-(x;\lambda)$.  We note that when $\tau = 1$, $\mathcal T$ is self-adjoint and $\sigma(\mathcal T) \cap \mathbb C^\pm  = \varnothing$.
With the choice of $\mathbf{a}_+(\lambda)$ we find that 
\begin{align}\label{eq:Jost}
    \mathbf{v}_+(x;\lambda)&=\frac{1}{a(\lambda)}\Bigg(\mathbf{r}_{1,-}(x;\lambda)\int_\infty^xe^{-i\lambda(x-s)}\begin{bmatrix}
            r_{2,+}^{(2)}(s;\lambda)&-r_{2,+}^{(1)}(s;\lambda)\end{bmatrix}\mathbf{f}(s)\mathrm{d}s \\\nonumber
            &\quad\quad\quad\quad\quad+ \mathbf{r}_{2,+}(x;\lambda)\int_{-\infty}^xe^{i\lambda(x-s)}\begin{bmatrix}
                -r_{1,-}^{(2)}(s;\lambda) & r_{1,-}^{(1)}(s;\lambda)
            \end{bmatrix}\mathbf{f}(s)\mathrm{d}s\Bigg).
\end{align}
Provided that $a(\lambda) \neq 1$, $\mathbf{v}_+(x;\lambda)$ is a solution of \eqref{eq:DiracSpatial}.

Now, suppose $\lambda \in \mathbb C^+$ is such that $a(\lambda) = 0$.  Then the solutions $\mathbf{m}_{1,-}(x;\lambda)$ and $\mathbf{m}_{2,+}(x;\lambda)$ are linearly dependent. From the boundedness of the Jost solutions $\mathbf{r}_{1,-}(x;\lambda)$ and $\mathbf{r}_{2,+}(x;\lambda)$ we conclude that $\mathbf{m}_{1,-}(x;\lambda)$ is an $L^2$ eigenfunction of $\mathcal T$, giving $\lambda \in \sigma(\mathcal T)$ and 
\begin{align*}
    \mathcal A_0 := \{ \lambda \in \mathbb C^+ \!:\! a(\lambda) = 0 \} \subset \mathbb C^+ \cap \sigma(\mathcal T).
\end{align*}

Write $\lambda=\xi+i\eta$ with $\eta>0$.
Each term in the integrands of~\eqref{eq:Jost} is analytic in the upper half-plane. 
Each exponential factor satisfies $|e^{- i\lambda(x-s)}|=e^{-\eta(\lambda)|s-x|}\leq 1$, $x\leq s$ and $|e^{i\lambda(x-s)}|=e^{-\eta|x-s|}\leq 1$, $x\geq s$.
Since $\mathbf{r}_{1,-}(x;\lambda)$ and $\mathbf{r}_{2,+}(x;\lambda)$ are uniformly bounded in $x$ and $\lambda$ (see Corollary~\ref{cor:Jost}) and $\mathbf{f}$ is Schwartz-class, the integrands and their derivatives are dominated by an integrable function independent of $\lambda$. Complex differentiability can be established with standard arguments using the dominated convergence theorem.

We now show that 
\begin{equation*}
    \mathbf{v}_+(x;\lambda)=(\mathcal{T}-\lambda)^{-1}(i\sigma_3\mathbf{f})(x), \quad \lambda \in \mathbb C^+ \setminus \mathcal A_0.
\end{equation*}
Each integral term in~\eqref{eq:Jost} is bounded by convolution with the kernel $K_\eta(t)=e^{-\eta t}\mathbbm{1}_{t\geq0}\in L^1(\mathbb{R},\mathbb{C})$, with $\|K_\eta\|_{L^1(\mathbb{R},\mathbb{C})}=1/\eta$. Since $\mathbf{f}\in L^2(\mathbb{R},\mathbb{C}^2)$ and the Jost solutions are uniformly bounded, Young’s inequality implies that $\mathbf{v}_+(x;\lambda)\in L^2(\mathbb{R},\mathbb{C}^2)$.  This also establishes that $\mathcal A_0 = \mathbb C^+ \cap \sigma(\mathcal T)$.

To show that $\partial_x\mathbf{v}_\pm(\diamond;\lambda)\in L^2(\mathbb{R},\mathbb{C}^2)$, we use the differential equation~\eqref{eq:DiracSpatial}, which may be written as
\begin{equation*}
    \partial_x\mathbf{v}(x;\lambda)=-i\lambda\mathbf{v}(x;\lambda)+\mathbf{Q}(x)\mathbf{v}(x;\lambda)+\mathbf{f}(x).
\end{equation*}
Because $\mathbf{v}_+(x;\lambda)\in L^2(\mathbb{R},\mathbb{C}^2)$, $\mathbf{f}(x)\in L^2(\mathbb{R},\mathbb{C}^2)$, and $\mathbf{Q}(x)\in L^\infty(\mathbb{R},\mathbb{C}^{2\times 2})$, each term on the right-hand side belongs to $L^2(\mathbb{R},\mathbb{C}^2)$.
The same arguments apply for $\text{Im}(\lambda)<0$ using $\mathbf{v}_-(x;\lambda)$.

\end{proof}

The recovery formula for $\mathbf{f}(x)$ is established in Lemma~\ref{lemma:Recovery}.

\begin{lemma}[Recovery formula]\label{lemma:Recovery}
    Let $q\in\mathcal{S}(\mathbb{R},\mathbb{C})$ and $\mathbf{f}\in\mathcal{S}(\mathbb{R},\mathbb{C}^2)$. Then, along the imaginary axis 
    \begin{equation*}
        \lim_{\substack{\lambda=i\eta\\ \eta\rightarrow\pm\infty}}i\lambda\sigma_3\mathbf{v}(x;\lambda)=\mathbf{f}(x).
    \end{equation*}
\end{lemma}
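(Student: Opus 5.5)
We work with the representation \eqref{eq:Jost} of $\mathbf{v}_+$ for $\lambda=i\eta$, $\eta\to+\infty$; the case $\eta\to-\infty$ with $\mathbf{v}_-$ is symmetric. Since $a(\lambda)=1+\mathcal{O}(1/|\lambda|)$ in $\overline{\mathbb{C}^+}$, the zeros of $a$ are bounded, so for $\eta$ large we may ignore $\mathcal{A}_0$ and write $1/a(i\eta)=1+\mathcal{O}(1/\eta)$. The plan is to extract the leading order $1/\lambda$ behaviour of each of the two integral terms by one integration by parts in $s$. This is the same mechanism as in the Fourier case of Section~\ref{sec:Fourier}, where $\mu_\pm=-f/(i\lambda)+\mathcal{O}(1/\lambda^2)$.

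\textbf{First term.} Set $g_1(s;\lambda)=\begin{bmatrix} r_{2,+}^{(2)}(s;\lambda) & -r_{2,+}^{(1)}(s;\lambda)\end{bmatrix}\mathbf{f}(s)$. Writing $e^{-i\lambda(x-s)}=\frac{1}{i\lambda}\partial_s e^{-i\lambda(x-s)}$ gives
\begin{equation*}
\int_\infty^x e^{-i\lambda(x-s)}g_1(s;\lambda)\,\mathrm{d}s=\frac{1}{i\lambda}g_1(x;\lambda)-\frac{1}{i\lambda}\int_\infty^x e^{-i\lambda(x-s)}\partial_s g_1(s;\lambda)\,\mathrm{d}s.
\end{equation*}
The boundary term at $+\infty$ vanishes because $\mathbf{f}$ is Schwartz and $\mathbf{r}_{2,+}$ is bounded (Corollary~\ref{cor:Jost}). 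By Lemma~\ref{lemma:JostDeriv}, $\partial_s g_1$ is bounded by $C(|\mathbf{f}|+|\mathbf{f}'|)$, which is integrable and independent of $\lambda$. For $\lambda=i\eta$ the factor $e^{-i\lambda(x-s)}=e^{-\eta(s-x)}$ tends to $0$ pointwise for $s>x$. Dominated convergence then gives that the remaining integral is $o(1)$, so the whole term is $o(1/\lambda)$.

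\textbf{Second term.} The same computation with $g_2(s;\lambda)=\begin{bmatrix}-r_{1,-}^{(2)} & r_{1,-}^{(1)}\end{bmatrix}\mathbf{f}(s)$ and $e^{i\lambda(x-s)}=-\frac{1}{i\lambda}\partial_s e^{i\lambda(x-s)}$ gives $-\frac{1}{i\lambda}g_2(x;\lambda)+o(1/\lambda)$.

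\textbf{Leading order.} Combining the two terms,
\begin{equation*}
i\lambda\,\mathbf{v}_+(x;\lambda)=\frac{1}{a(\lambda)}\Big(\mathbf{r}_{1,-}(x;\lambda)g_1(x;\lambda)-\mathbf{r}_{2,+}(x;\lambda)g_2(x;\lambda)\Big)+o(1).
\end{equation*}
We now use the large-$\lambda$ limits $\mathbf{r}_{1,-}\to(1,0)^T$ and $\mathbf{r}_{2,+}\to(0,1)^T$ established in the proof of Lemma~\ref{lemma:JostDeriv}, together with $a\to1$. Then $g_1(x;\lambda)\to f_1(x)$ and $g_2(x;\lambda)\to -f_2(x)$, so the bracket tends to $(f_1,0)^T+(0,f_2)^T$. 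Tracking the orientation and sign conventions should produce exactly $\sigma_3^{-1}\mathbf{f}=\sigma_3\mathbf{f}$ as the limit of $i\lambda\mathbf{v}$. Multiplying by $\sigma_3$ then gives $\mathbf{f}(x)$.

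A consistency check is to set $\mathbf{Q}=0$. Then $\mathbf{v}$ is diagonal and decouples into two scalar problems of the form \eqref{eq:Fourier} with $\lambda\mapsto\mp\lambda$, and the Fourier computation gives each component to be $\pm f_j/(i\lambda)$. This fixes the signs.

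The main obstacle is the bookkeeping of signs together with the uniformity of the error terms. One must check carefully that the $\mathcal{O}(1/|\lambda|)$ convergence of the Jost functions holds uniformly in $x$, or at least at the fixed $x$ needed here. One must also check that the $o(1)$ remainder truly vanishes: this uses pointwise decay of $e^{-\eta|x-s|}$ together with the uniform derivative bound of Lemma~\ref{lemma:JostDeriv}. The Schwartz hypothesis on $q$ supplies the hypotheses of that lemma.
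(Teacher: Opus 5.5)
Your overall structure is correct, but the final sign bookkeeping is wrong as written, so the proposal does not yet reach the lemma.

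\textbf{The sign slip.} You write $g_2(x;\lambda)\to -f_2(x)$. In fact
\[
g_2(x;\lambda)=-r_{1,-}^{(2)}(x;\lambda)f_1(x)+r_{1,-}^{(1)}(x;\lambda)f_2(x),
\]
and $\mathbf{r}_{1,-}(x;i\eta)\to(1,0)^T$, so $g_2(x;i\eta)\to f_2(x)$.

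With your sign, the bracket tends to $(f_1,f_2)^T=\mathbf{f}$. That would give $i\lambda\sigma_3\mathbf{v}\to\sigma_3\mathbf{f}$, which contradicts both the lemma and your own next sentence.

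With the correct sign, $\mathbf{r}_{1,-}g_1-\mathbf{r}_{2,+}g_2\to(f_1,-f_2)^T=\sigma_3\mathbf{f}$. Hence $i\lambda\mathbf{v}_+\to\sigma_3\mathbf{f}$ and $i\lambda\sigma_3\mathbf{v}_+\to\mathbf{f}$.

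So the step you defer with \emph{should produce} is a two-line direct computation. You do not need the $\mathbf{Q}=\mathbf{0}$ comparison to fix it, though that check does agree with the corrected result.

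\textbf{How your route differs from the paper's.} The paper inserts the asymptotics $\mathbf{r}_{2,+}(s;i\eta)=(0,1)^T+\mathcal{O}(1/\eta)$ and $\mathbf{r}_{1,-}(s;i\eta)=(1,0)^T+\mathcal{O}(1/\eta)$ directly into the integrands of $v_1,v_2$. This requires the asymptotics to hold uniformly in $s$. It then uses that $\eta e^{-\eta t}$, $t\ge 0$, is an approximate identity, so only continuity and boundedness of $\mathbf{f}$ are needed.

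You instead integrate by parts once in $s$. This is the computation of Lemma~\ref{lemma:LargeLambda}, but keeping the boundary term, so the leading term is read off at $s=x$. You therefore need the Jost limits only at the single point $x$. The price is the $\lambda$-uniform derivative bounds of Lemma~\ref{lemma:JostDeriv} and integrability of $\mathbf{f}'$ to control the remainder.

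Both routes work under the Schwartz hypotheses. Your remark that the zeros of $a$ lie in a bounded set, so $1/a(i\eta)$ is defined for large $\eta$ even when $\tau=-1$, is a point the paper leaves implicit.
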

\begin{proof}
    We prove the result for $\mathbf{v}_+(x;\lambda)$. The $\mathbf{v}_-(x;\lambda)$ case is analogous. Using the representation of $\mathbf{v}_+(x;\lambda)$ in terms of the Jost solutions,
    \begin{equation*}
    \mathbf{v}_+(x;\lambda)
    = \frac{1}{a(\lambda)}\Big(
    \mathbf{r}_{1,-}(x;\lambda)\,v_1(x;\lambda)
    + \mathbf{r}_{2,+}(x;\lambda)\,v_2(x;\lambda)
    \Big),
    \end{equation*}
    where
    \begin{align*}
    v_1(x;\lambda)
    &= \int_\infty^x e^{-i\lambda(x-s)}
    \begin{bmatrix}
    r_{2,+}^{(2)}(s;\lambda) & -r_{2,+}^{(1)}(s;\lambda)
    \end{bmatrix}
    \mathbf{f}(s)\,\mathrm{d}s\\
    v_2(x;\lambda)&=\int_{-\infty}^xe^{i\lambda(x-s)}\begin{bmatrix}
                -r_{1,-}^{(2)}(s;\lambda) & r_{1,-}^{(1)}(s;\lambda)
            \end{bmatrix}\mathbf{f}(s)\mathrm{d}s.
    \end{align*}
    We multiply through by $i\lambda\sigma_3$ to obtain
    \begin{equation*}
        i\lambda\sigma_3\mathbf{v}_+(x;\lambda)=\frac{1}{a(\lambda)}\left(\sigma_3\mathbf{r}_{1,-}(x;\lambda)[i\lambda v_1(x;\lambda)] + \sigma_3\mathbf{r}_{2,+}(x;\lambda)[i\lambda v_2(x;\lambda)]\right).
    \end{equation*}
    By Lemma~\ref{lemma:JostDeriv}, along the imaginary axis $\lambda=i\eta$ with $\eta\rightarrow +\infty$,
    \begin{equation*}
        \mathbf{r}_{1,-}(x;i\eta)=\begin{bmatrix}
            1\\0
        \end{bmatrix}+\mathcal{O}\left(\frac{1}{\eta}\right),\quad \mathbf{r}_{2,+}(x;i\eta)=\begin{bmatrix}
            0\\1
        \end{bmatrix} + \mathcal{O}\left(\frac{1}{\eta}\right),\quad a(i\eta)=1+\mathcal{O}\left(\frac{1}{\eta}\right),
    \end{equation*}
    uniformly in $x$. It therefore suffices to show that 
    \begin{equation*}
        -\eta v_1(x;i\eta)\rightarrow \begin{bmatrix}
            1 & 0
        \end{bmatrix}\mathbf{f}(x),\quad -\eta v_2(x;i\eta)\rightarrow -\begin{bmatrix}
            0 & 1
        \end{bmatrix}\mathbf{f}(x),\quad \eta\rightarrow\infty.
    \end{equation*}

    We have that
    \begin{align*}
        -\eta v_1(x;i\eta)&=\eta\int_x^\infty e^{-\eta(s-x)} \begin{bmatrix}
    r_{2,+}^{(2)}(s;i\eta) & -r_{2,+}^{(1)}(s;i\eta)
    \end{bmatrix}
    \mathbf{f}(s)\,\mathrm{d}s\\
    &=\eta\int_0^\infty e^{-\eta t}\begin{bmatrix}
        1 & 0
    \end{bmatrix}\mathbf{f}(x+t)\mathrm{d}t + \mathcal{O}\left(\frac{1}{\eta}\right).
    \end{align*}
    Since $\vec f(x)$ is assumed continuous, the result for $v_1(x;i \eta)$ follows.    An analogous computation for $v_2(x;i \eta)$ gives the result.
\end{proof}

Our ultimate goal is to form an \gls{rhp}. Solving this \gls{rhp} will give an alternate representation of $\vec v(x;\lambda)$ making the recovery formula in Lemma~\ref{lemma:Recovery} useful. To do this, we must establish a jump condition and a condition as $|\lambda|\rightarrow\infty$, $\lambda\in\mathbb{C}\setminus\mathbb{R}$. 

\begin{lemma}[Jump condition]\label{lemma:jump}
    Let $\mathbf{v}(x;\lambda)$ be defined by
    \begin{equation*}
        \mathbf{v}(x;\lambda)=\begin{cases}
            \mathbf{v}_+(x;\lambda),\quad&\lambda\in\mathbb{C}^+,\\
            \mathbf{v}_-(x;\lambda),\quad&\lambda\in\mathbb{C}^-,
        \end{cases}
    \end{equation*}
    where $\mathbf{v}_\pm(x;\lambda)$ are given in~\eqref{eq:VarofParam} with $\mathbf{a}_\pm(\lambda)$ defined in Lemma~\ref{lemma:analyticity}. 
    Then for each fixed $x\in\mathbb{R}$, the boundary values
    \begin{equation*}
        \mathbf{v}^\pm(x;\lambda)=\lim_{\epsilon\rightarrow0^+}\mathbf{v}(x;\lambda\pm i\epsilon)
    \end{equation*}
    exist for $\lambda\in\mathbb{R}$ and satisfy
    \begin{equation}\label{eq:jump}
        \mathbf{v}^+(x;\lambda)-\mathbf{v}^-(x;\lambda)=c_+(\lambda)\mathbf{m}_{2,+}(x;\lambda)+c_-(\lambda)\mathbf{m}_{2,-}(x;\lambda),
    \end{equation}
    where
    \begin{align*}
        c_+(\lambda)&=\frac{1}{a(\lambda)}\int_{-\infty}^\infty\begin{bmatrix}
            -m_{1,-}^{(2)}(s;\lambda) & m_{1,-}^{(1)}(s;\lambda)
        \end{bmatrix}\mathbf{f}(s)\mathrm{d}s,\\
        c_-(\lambda)&=-\frac{1}{A(\lambda)}\int_{-\infty}^\infty\begin{bmatrix}
            m_{1,+}^{(2)}(s;\lambda) & -m_{1,+}^{(1)}(s;\lambda)
        \end{bmatrix}\mathbf{f}(s)\mathrm{d}s.
    \end{align*}
\end{lemma}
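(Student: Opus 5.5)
The plan is to compute both boundary values explicitly from the representations of $\mathbf{v}_\pm$ in Lemma~\ref{lemma:analyticity}, and then use the scattering relations to express everything in the basis $\{\mathbf{m}_{2,-},\mathbf{m}_{2,+}\}$. First, existence of the boundary values. By Corollary~\ref{cor:Jost}, the Jost matrices $\mathbf{R}_\pm$ extend continuously to $\mathbb{R}$ and are uniformly bounded. The scattering coefficients $a$ and $A$ also extend continuously, being Wronskians of continuous functions. In \eqref{eq:Jost} the exponential factors have modulus at most one, and $\mathbf{f}$ is Schwartz, so dominated convergence shows that $\mathbf{v}_+(x;\lambda+i\epsilon)\to\mathbf{v}^+(x;\lambda)$ as $\epsilon\to0^+$. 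This holds wherever $a(\lambda)\neq0$; for $\tau=1$ that is all of $\mathbb{R}$ by self-adjointness, and otherwise we assume it. The same argument applies to $\mathbf{v}_-$. The boundary values are given by the same formulas evaluated at real $\lambda$, and there $\mathbf{m}_{\ell,\pm}$ are all defined simultaneously.

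Next, write both boundary values in variation-of-parameters form. For real $\lambda$ we have $\mathbf{M}_+^{-1}=\frac{1}{a}\begin{bmatrix} m_{2,+}^{(2)} & -m_{2,+}^{(1)}\\ -m_{1,-}^{(2)} & m_{1,-}^{(1)}\end{bmatrix}$, so
\[
\mathbf{v}^+=\frac{1}{a}\Big(-\mathbf{m}_{1,-}(x)\int_x^\infty W_{2,+}(s)\,\mathrm{d}s+\mathbf{m}_{2,+}(x)\int_{-\infty}^x W_{1,-}(s)\,\mathrm{d}s\Big),
\]
where $W_{2,+}=[m_{2,+}^{(2)}\ \ -m_{2,+}^{(1)}]\mathbf{f}$ and $W_{1,-}=[-m_{1,-}^{(2)}\ \ m_{1,-}^{(1)}]\mathbf{f}$. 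Similarly,
\[
\mathbf{v}^-=\frac{1}{A}\Big(\mathbf{m}_{2,-}(x)\int_{-\infty}^x[m_{1,+}^{(2)}\ \ -m_{1,+}^{(1)}]\mathbf{f}\,\mathrm{d}s-\mathbf{m}_{1,+}(x)\int_x^\infty[-m_{2,-}^{(2)}\ \ m_{2,-}^{(1)}]\mathbf{f}\,\mathrm{d}s\Big).
\]
The integrals $\int_{-\infty}^x$ and $\int_x^\infty$ are then completed to $\int_{-\infty}^\infty$. This generates the stated $c_\pm$ terms plus remainders of the form $\int_x^\infty$ or $\int_{-\infty}^x$.

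The core step is to show that all the remainder terms cancel. The cleanest route avoids tracking each remainder separately. Both $\mathbf{v}^+$ and $\mathbf{v}^-$ solve \eqref{eq:DiracSpatial} at real $\lambda$, so their difference solves the homogeneous equation \eqref{eq:f=0} and hence equals $\alpha\mathbf{m}_{2,-}+\beta\mathbf{m}_{2,+}$ with constants $\alpha,\beta$ independent of $x$. To identify $\alpha$ and $\beta$, send $x\to+\infty$. There $\int_x^\infty\to0$, $\mathbf{m}_{2,\pm}\sim e^{\mp i\lambda x}$ times unit vectors, and $\mathbf{m}_{1,-}$ and $\mathbf{m}_{1,+}$ are bounded. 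In this limit $\mathbf{v}^+\approx \mathbf{m}_{2,+}\,c_+(\lambda)$, since $\int_{-\infty}^xW_{1,-}\to a\,c_+$. Likewise $\mathbf{v}^-\approx -c_-(\lambda)\mathbf{m}_{2,-}$. Therefore
\[
\mathbf{v}^+-\mathbf{v}^- - c_+\mathbf{m}_{2,+}-c_-\mathbf{m}_{2,-}\to 0 \quad\text{as } x\to+\infty.
\]
The left side is itself a homogeneous solution $\alpha'\mathbf{m}_{2,-}+\beta'\mathbf{m}_{2,+}$. Since $\mathbf{m}_{2,\pm}$ have modulus-one leading behaviour $e^{\mp i\lambda x}$ in distinct components, tending to zero forces $\alpha'=\beta'=0$, which gives \eqref{eq:jump}.

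I expect the main obstacle to be justifying these $x\to+\infty$ asymptotics uniformly enough. Concretely, I must check that $\mathbf{m}_{2,\pm}(x)e^{\pm i\lambda x}$ converge to the unit vectors as $x\to+\infty$, which follows from the Volterra equation and $\mathbf{Q}\in L^1$. I must also verify that the tail integrals times bounded factors vanish. As a fallback, I would carry out the direct algebraic cancellation of the remainders. That route substitutes the scattering relations $\mathbf{m}_{1,-}=a\mathbf{m}_{2,-}+b\mathbf{m}_{2,+}$ and $\mathbf{m}_{1,+}=B\mathbf{m}_{2,-}+A\mathbf{m}_{2,+}$, uses $\det\mathbf{S}=1$, and exploits the Wronskian identity $\mathbf{M}_+\mathbf{M}_+^{-1}$ versus $\mathbf{M}_-\mathbf{M}_-^{-1}$ to show that the remainder kernels coincide.
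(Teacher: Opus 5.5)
Your proposal is correct and follows the paper's proof essentially step for step. Both arguments obtain the boundary values by dominated convergence, note that $\mathbf{v}^+-\mathbf{v}^-$ solves the homogeneous equation, and identify its coefficients in the basis $\mathbf{m}_{2,\pm}$ from the $x\to+\infty$ asymptotics $\mathbf{v}^+=c_+\mathbf{m}_{2,+}+o(1)$, $\mathbf{v}^-=-c_-\mathbf{m}_{2,-}+o(1)$. Your use of the normalization of $\mathbf{m}_{2,\pm}$ at $+\infty$, to show that a homogeneous solution tending to zero must vanish, is slightly more careful than the paper's bare appeal to linear independence.

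One small correction: for $\tau=1$, $a(\lambda)\neq0$ on $\mathbb{R}$ does not follow from self-adjointness, which only rules out zeros in $\mathbb{C}^+$. It follows instead from the symmetry $A(\lambda)=\overline{a(\bar\lambda)}$, $B(\lambda)=\overline{b(\bar\lambda)}$ together with $aA-bB=\det\mathbf{S}=1$, which gives $|a|^2-|b|^2=1$ on $\mathbb{R}$.
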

\begin{proof}
    By the dominated convergence theorem, using the continuity and uniform bounds that follow from applying Corollary~\ref{cor:Jost}, the boundary values $\mathbf{v}^\pm(x;\lambda)$ exist pointwise for each fixed $x\in\mathbb{R}$. To derive the jump condition, we first determine the leading-order asymptotic behavior of $\mathbf{v}^\pm(x;\lambda)$ as $x\rightarrow+\infty$. The absolute convergence of the integrals in~\eqref{eq:VarofParam} imply the leading order behavior as $x\rightarrow+\infty$ is given by
    \[
    \mathbf v^+(x;\lambda) = c_+(\lambda)\mathbf m_{2,+}(x;\lambda) + o(1),\qquad
    \mathbf v^-(x;\lambda) = -c_-(\lambda)\mathbf m_{2,-}(x;\lambda) + o(1).
    \]

    For $\lambda\in\mathbb{R}$, the boundary values $\mathbf{v}^\pm(x;\lambda)$ satisfy $(\mathcal{T}-\lambda)\mathbf{v}^\pm(x;\lambda)=i\sigma_3\mathbf{f}(x)$. Hence, their difference
    \begin{equation*}
        \mathbf{w}(x;\lambda):=\mathbf{v}^+(x;\lambda)-\mathbf{v}^-(x;\lambda)
    \end{equation*}
    satisfies the homogeneous equation $(\mathcal{T}-\lambda)\mathbf{w}(x;\lambda)=\mathbf{0}$. For fixed $\lambda\in\mathbb{R}$, the solution space of the homogeneous system is spanned by $\mathbf{m}_{2,+}(x;\lambda)$ and $\mathbf{m}_{2,-}(x;\lambda)$. Thus, there exist coefficients $\alpha(\lambda)$, $\beta(\lambda)$ such that
    \begin{equation}\label{eq:w}
        \mathbf{w}(x;\lambda)=\alpha(\lambda)\mathbf{m}_{2,+}(x;\lambda)+\beta(\lambda)\mathbf{m}_{2,-}(x;\lambda).
    \end{equation}
  We obtain 
    \begin{equation*}
        \mathbf{w}(x;\lambda)=c_+(\lambda)\mathbf{m}_{2,+}(x;\lambda)+c_-(\lambda)\mathbf{m}_{2,-}(x;\lambda)+o(1)\quad\text{as}\quad x\rightarrow\infty.
    \end{equation*}
    Comparing this with the representation~\eqref{eq:w}, and using the linear independence of $\mathbf{m}_{2,\pm}$, we conclude that $\alpha(\lambda)=c_+(\lambda)$ and $\beta(\lambda)=c_-(\lambda)$. This yields~\eqref{eq:jump}.
\end{proof}

\begin{lemma}[Large-$|\lambda|$ behavior of $\mathbf{v}_\pm$]\label{lemma:LargeLambda}
Let $q \in \mathcal{S}(\mathbb{R},\mathbb{C})$ and $\mathbf{f} \in \mathcal{S}(\mathbb{R},\mathbb{C}^2)$. Then for $\lambda \in \mathbb{C}^\pm$,
\begin{equation*}
\mathbf{v}_\pm(x;\lambda)= \mathcal{O}\!\left(\frac{1}{|\lambda|}\right),
\quad \text{as } |\lambda|\to\infty,
\end{equation*}
uniformly in $x \in \mathbb{R}$.
\end{lemma}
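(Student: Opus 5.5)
We work with the Jost-function representation \eqref{eq:Jost} of $\mathbf{v}_+$, as in the proof of Lemma~\ref{lemma:Recovery}, writing
\[
\mathbf{v}_+(x;\lambda)=\frac{1}{a(\lambda)}\Big(\mathbf{r}_{1,-}(x;\lambda)v_1(x;\lambda)+\mathbf{r}_{2,+}(x;\lambda)v_2(x;\lambda)\Big).
\]
The case of $\mathbf{v}_-$ is symmetric. By Corollary~\ref{cor:Jost}, $\mathbf{r}_{1,-}$ and $\mathbf{r}_{2,+}$ are bounded uniformly in $(x,\lambda)\in\mathbb{R}\times\overline{\mathbb{C}^+}$. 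Since $a(\lambda)=1+\mathcal{O}(1/|\lambda|)$, we have $|a(\lambda)|\ge 1/2$ for $|\lambda|$ large. It therefore suffices to show that
\[
\sup_{x}|v_j(x;\lambda)|=\mathcal{O}(1/|\lambda|),\qquad j=1,2,
\]
as $|\lambda|\to\infty$ in $\mathbb{C}^+$.

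We integrate by parts in $v_1$ and $v_2$, exactly as in Section~\ref{sec:Fourier}. Write
\[
g_1(s;\lambda)=\begin{bmatrix} r_{2,+}^{(2)}(s;\lambda) & -r_{2,+}^{(1)}(s;\lambda)\end{bmatrix}\mathbf{f}(s),
\]
so that $v_1(x;\lambda)=\int_\infty^x e^{-i\lambda(x-s)}g_1(s;\lambda)\,\mathrm{d}s$. Since $\partial_s e^{-i\lambda(x-s)}=i\lambda e^{-i\lambda(x-s)}$, integration by parts gives
\[
v_1(x;\lambda)=\frac{1}{i\lambda}\Big(g_1(x;\lambda)-\int_\infty^x e^{-i\lambda(x-s)}\partial_s g_1(s;\lambda)\,\mathrm{d}s\Big).
\]
The boundary term at $s=\infty$ vanishes because $\mathbf{f}$ decays and $|e^{-i\lambda(x-s)}|\le 1$ for $s\ge x$ and $\operatorname{Im}\lambda\ge 0$. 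The first term is bounded by $\sup|\mathbf{r}_{2,+}|\,\|\mathbf{f}\|_\infty/|\lambda|$. The integral term is bounded by
\[
\int_{\mathbb{R}}|\partial_s g_1|\,\mathrm{d}s\le \sup|\partial_s\mathbf{r}_{2,+}|\,\|\mathbf{f}\|_{L^1}+\sup|\mathbf{r}_{2,+}|\,\|\mathbf{f}'\|_{L^1},
\]
which is independent of $x$ and $\lambda$. Lemma~\ref{lemma:JostDeriv} supplies the bound on $\partial_s\mathbf{r}_{2,+}$; its hypotheses hold because $q\in\mathcal{S}$. The same argument handles $v_2$, using the bounds on $\mathbf{r}_{1,-}$ and $\partial_s\mathbf{r}_{1,-}$ and the fact that $|e^{i\lambda(x-s)}|\le1$ for $s\le x$. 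Both estimates are uniform in $x$, so the claimed $\mathcal{O}(1/|\lambda|)$ bound follows.

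The main obstacle is the factor $1/a(\lambda)$. The bound $|a|\ge 1/2$ holds only for $|\lambda|\ge R$, with $R$ uniform in $\arg\lambda$. I would justify this uniformity from the $\mathcal{O}(1/|\lambda|)$ asymptotics established in the Wronskian computation, which hold uniformly on $\overline{\mathbb{C}^+}$. This also places the finitely many zeros of $a$ (the discrete spectrum, relevant when $\tau=-1$) inside a bounded set, so they do not affect the large-$|\lambda|$ statement.

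A second point to check is that the $\mathcal{O}(1/|\lambda|)$ asymptotics of the Jost functions, proved in Lemma~\ref{lemma:JostDeriv}, are uniform in $x$ and hold on all of $\overline{\mathbb{C}^+}$, not only along the imaginary axis. This matters only if one wants the refinement
\[
\mathbf{v}_+ = -\frac{i\sigma_3\mathbf{f}}{\lambda}+o(1/|\lambda|).
\]
For the bare $\mathcal{O}(1/|\lambda|)$ claim, uniform boundedness of $\mathbf{r}$ and $\partial_x\mathbf{r}$ suffices.
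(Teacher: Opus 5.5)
Your proposal is correct and follows the paper's proof essentially step for step. Both use the Jost representation of $\mathbf{v}_+$ and control $1/a(\lambda)$ through $a(\lambda)=1+\mathcal{O}(1/|\lambda|)$. Both then integrate by parts once in $v_1,v_2$, using the uniform bounds on $\mathbf{r}_{\ell,\pm}$ (Corollary~\ref{cor:Jost}) and on $\partial_x\mathbf{r}_{\ell,\pm}$ (Lemma~\ref{lemma:JostDeriv}), to get an $x$-uniform bound of order $|\lambda|^{-1}\bigl(\|\mathbf{f}\|_{L^\infty}+\|\mathbf{f}\|_{L^1}+\|\mathbf{f}'\|_{L^1}\bigr)$. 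Your observation that only large $|\lambda|$ matters for $1/a$, so zeros of $a$ when $\tau=-1$ are harmless, is a mild extension of the paper's standing assumption $\tau=1$ rather than a different route.
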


\begin{proof}
We prove the result for $\lambda \in \mathbb{C}^+$. The $\mathbb{C}^-$ case is analogous.
We use the representation of $\mathbf{v}_+$ in terms of the Jost solutions,
\begin{equation*}
\mathbf{v}_+(x;\lambda)
= \frac{1}{a(\lambda)}\Big(
\mathbf{r}_{1,-}(x;\lambda)\,v_1(x;\lambda)
+ \mathbf{r}_{2,+}(x;\lambda)\,v_2(x;\lambda)
\Big),
\end{equation*}
where
\begin{align*}
v_1(x;\lambda)
&= \int_\infty^x e^{-i\lambda(x-s)}
\begin{bmatrix}
r_{2,+}^{(2)}(s;\lambda) & -r_{2,+}^{(1)}(s;\lambda)
\end{bmatrix}
\mathbf{f}(s)\,\mathrm{d}s = \int_\infty^x e^{-i\lambda(x-s)}g_1(s;\lambda)\mathrm{d}s,\\
v_2(x;\lambda)&=\int_{-\infty}^xe^{i\lambda(x-s)}\begin{bmatrix}
            -r_{1,-}^{(2)}(s;\lambda) & r_{1,-}^{(1)}(s;\lambda)
        \end{bmatrix}\mathbf{f}(s)\mathrm{d}s = \int_{-\infty}^x e^{i\lambda(x-s)}g_2(s;\lambda)\mathrm{d}s.
\end{align*}
The Jost solutions and their $x$-derivatives are uniformly bounded in $x$ and $\lambda$ (see Corollary~\ref{cor:Jost} and Lemma~\ref{lemma:JostDeriv}).
Under the standing assumption $\tau=1$, $a(\lambda)\neq 0$ for $\lambda \in \mathbb{C}^+$, and $1/a(\lambda)=1+\mathcal{O}(1/|\lambda|)$ as $|\lambda|\rightarrow\infty$ with $\lambda\in\overline{\mathbb{C}^+}$. Thus, it suffices to show that $v_1$ and $v_2$ are $\mathcal{O}(1/|\lambda|)$.
Integrating by parts yields
\begin{equation*}
v_1(x;\lambda)
= \frac{1}{i\lambda}
\left(
g_1(x;\lambda)
- \int_\infty^x e^{-i\lambda(x-s)} g_1'(s;\lambda)\,\mathrm{d}s
\right),
\end{equation*}
where
\begin{equation*}
g_1'(s;\lambda)
=
\begin{bmatrix}
\partial_s r_{2,+}^{(2)}(s;\lambda) & -\partial_s r_{2,+}^{(1)}(s;\lambda)
\end{bmatrix}
\mathbf{f}(s)
+
\begin{bmatrix}
r_{2,+}^{(2)}(s;\lambda) & -r_{2,+}^{(1)}(s;\lambda)
\end{bmatrix}
\mathbf{f}'(s).
\end{equation*}
Using the uniform bounds on the Jost solutions,
\begin{equation*}
|g_1(s;\lambda)| \le C |\mathbf{f}(s)|, 
\quad 
|g_1'(s;\lambda)| \le C'\big(|\mathbf{f}(s)| + |\mathbf{f}'(s)|\big),
\end{equation*}
so that
\begin{equation*}
|v_1(x;\lambda)|
\le \frac{\hat{C}}{|\lambda|}
\left(
\|\mathbf{f}\|_{L^\infty(\mathbb{R},\mathbb{C}^2)}
+ \|\mathbf{f}\|_{L^1(\mathbb{R},\mathbb{C}^2)}
+ \|\mathbf{f}'\|_{L^1(\mathbb{R},\mathbb{C}^2)}
\right).
\end{equation*}
Since $\mathbf{f} \in \mathcal{S}(\mathbb{R},\mathbb{C}^2)$, this implies
\begin{equation*}
v_1(x;\lambda) = \mathcal{O}(|\lambda|^{-1}),\quad |\lambda|\rightarrow\infty.
\end{equation*} 
An analogous estimate holds for $v_2(x;\lambda)$.
\end{proof}

Utilizing the jump from Lemma~\ref{lemma:jump} 
and the condition at infinity from Lemma~\ref{lemma:LargeLambda}, 
we arrive at an \gls{rhp} that applies when $\sigma(\mathcal T) \subset \mathbb R$ (recall $\tau =1$ guarantees this).
\begin{rhp}\label{rhp:2}
    Find $\mathbf{v}(x;\diamond):\mathbb{C}\setminus\mathbb{R}\rightarrow\mathbb{C}^{2\times1}$ analytic such that for $\lambda\in\mathbb{R}$,
    \begin{align*}
        \mathbf{v}^+(x;\lambda)-\mathbf{v}^-(x;\lambda)
        &=c_+(\lambda)\mathbf{m}_{2,+}(x;\lambda)+c_-(\lambda)\mathbf{m}_{2,-}(x;\lambda),\\
        \mathbf{v}(x;\lambda)&=\mathcal O\left(\frac{1}{|\lambda|} \right),
\quad \text{as } |\lambda|\to\infty,\ \lambda\in\mathbb{C}\setminus\mathbb{R}.
    \end{align*}
\end{rhp}
The solution to RHP~\ref{rhp:2} is given by the Cauchy integral,
\begin{equation*}
    \mathbf{v}(x;\lambda)=\frac{1}{2\pi i}\int_{-\infty}^\infty\frac{c_+(\lambda')\mathbf{m}_{2,+}(x;\lambda')+c_-(\lambda')\mathbf{m}_{2,-}(x;\lambda')}{\lambda'-\lambda}\mathrm{d}\lambda'.
\end{equation*}
Substituting the Cauchy integral solution into the recovery formula established in Lemma~\ref{lemma:Recovery} gives
\begin{equation*}
    \mathbf{f}(x)=i\sigma_3\lim_{\substack{\lambda=i\eta\\ \eta\rightarrow+\infty}}\left(\frac{1}{2\pi i}\int_{-\infty}^\infty\frac{\lambda\left(c_+(\lambda')\mathbf{m}_{2,+}(x;\lambda')+c_-(\lambda')\mathbf{m}_{2,-}(x;\lambda')\right)}{\lambda'-\lambda}\mathrm{d}\lambda'\right).
\end{equation*}
Since $\mathbf{m}_{\ell,\pm}(x;\lambda')=e^{\pm i\lambda' x}\mathbf{r}_{\ell,\pm}(x;\lambda'),~\ell\in\{1,2\}$, the coefficients $c_\pm(\lambda')$ are oscillatory integrals in $s$ with amplitude functions involving Jost functions and $\mathbf{f}$. Assume that $q\in L^1(\mathbb{R},\mathbb{C})\cap L^\infty(\mathbb{R},\mathbb{C})$ and that $\mathbf{f}$ has two weak derivatives in $L^1(\mathbb{R},\mathbb{C}^2)$. Lemma~\ref{lemma:JostDeriv} guarantees boundedness of the Jost functions and their first $x$-derivatives. Under these assumptions, the amplitudes arising in $c_\pm(\lambda')$ are twice differentiable in $s$ with all derivatives in $L^1(\mathbb{R})$, so two integrations by parts in $s$ are justified. This yields
$c_\pm(\lambda') = 
\mathcal{O}(|\lambda'|^{-2})$ as $|\lambda'|\to\infty$. Consequently,
\begin{equation*}
    \int_{-\infty}^\infty\left|c_+(\lambda')\mathbf{m}_{2,+}(x;\lambda')
    +c_-(\lambda')\mathbf{m}_{2,-}(x;\lambda')\right|\mathrm{d}\lambda'<\infty.
\end{equation*}
The dominated convergence theorem allows the limit to pass inside the integral, yielding
\begin{equation*}
    \mathbf{f}(x)=-\frac{\sigma_3}{2\pi}\int_{-\infty}^\infty \left(c_+(\lambda)\mathbf{m}_{2,+}(x;\lambda)+c_-(\lambda)\mathbf{m}_{2,-}(x;\lambda)\right)\mathrm{d}\lambda.
\end{equation*}
This results in the following transform pair for the variable-coefficient Dirac equation, which we call the Dirac transform, when $\sigma(\mathcal T) \subset \mathbb R$,
\begin{align}\label{eq:InvTrans}
    \mathbf{f}(x)&=-\frac{\sigma_3}{2\pi}\int_{-\infty}^\infty\begin{bmatrix}
        \mathbf{m}_{2,+}(x;\lambda) & \mathbf{m}_{2,-}(x;\lambda)
    \end{bmatrix}\hat{\mathbf{f}}(\lambda)\mathrm{d}\lambda,\\
    \hat{\mathbf{f}}(\lambda)&=\begin{bmatrix}
        c_+(\lambda)\\
        c_-(\lambda)
    \end{bmatrix}=\begin{bmatrix}
        \frac{1}{a(\lambda)}\int_{-\infty}^\infty\begin{bmatrix}
            -m_{1,-}^{(2)}(s;\lambda) & m_{1,-}^{(1)}(s;\lambda)
        \end{bmatrix}\mathbf{f}(s)\mathrm{d}s\\
        -\frac{1}{A(\lambda)}\int_{-\infty}^\infty\begin{bmatrix}
            m_{1,+}^{(2)}(s;\lambda) & -m_{1,+}^{(1)}(s;\lambda)
        \end{bmatrix}\mathbf{f}(s)\mathrm{d}s
    \end{bmatrix}.
\end{align}
We refer to $1/a(\lambda)$ and $1/A(\lambda)$ as the transmission coefficients.

This derivation is equivalent to the recovery of $\mathbf{f}$ given by Theorem~\ref{thm:Stone} when $\tau = 1$. Using RHP~\ref{rhp:2} and Lemma~\ref{lemma:analyticity}, we find that 
\begin{align*}
    i\sigma_3\mathbf{f}(x) &= -(i\sigma_3)\frac{\sigma_3}{2\pi}\int_{-\infty}^\infty \left(c_+(\lambda)\mathbf{m}_{2,+}(\lambda)+c_-(\lambda)\mathbf{m}_{2,-}(x;\lambda)\right)\mathrm{d}\lambda = \frac{1}{2\pi i}\int_{-\infty}^\infty \left( \mathbf{v}^+(x;\lambda)-\mathbf{v}^-(x;\lambda)\right)\mathrm{d}\lambda\\
    &= \frac{1}{2\pi i}\int_{-\infty}^\infty \left(\lim_{\epsilon\rightarrow0^+}\left[\mathbf{v}(x;\lambda+i\epsilon)-\mathbf{v}(x;\lambda-i\epsilon)\right]\right)\mathrm{d}\lambda\\
    &=\frac{1}{2\pi i}\int_{-\infty}^\infty \left(\lim_{\epsilon\rightarrow0^+}\left[(\mathcal{T}-\lambda-i\epsilon)^{-1}-(\mathcal{T}-\lambda+i\epsilon)^{-1}\right](i\sigma_3\mathbf{f})(x)\right)\mathrm{d}\lambda\\
    &=\lim_{\epsilon\rightarrow 0^+}\left(\frac{1}{2\pi i}\int_{-\infty}^\infty\left[(\mathcal{T}-\lambda-i\epsilon)^{-1}-(\mathcal{T}-\lambda+i\epsilon)^{-1}\right](i\sigma_3\mathbf{f})(x)\mathrm{d}\lambda\right).
\end{align*}
This matches the recovery formula in Theorem~\ref{thm:Stone} for~\eqref{eq:DiracResolvent}.
Note that we were able to pass the limit outside of the integral using an argument similar to that in Section~\ref{sec:Fourier}.

\subsection{Alternate Representation of the Inverse Transform}\label{sec:DiracAlternate}

In Section~\ref{sec:Fourier}, the inverse transform recovery formula was expressed via a solution to an inhomogeneous \gls{rhp}. We now look for a similar formulation for the Dirac transform. We assume $\tau = 1$ in this subsection. We begin by constructing a jump matrix $\mathbf{G}(x;\lambda)$ from the scattering data. Define the matrix
\begin{equation*}
    \mathbf{K}(x;\lambda)=\begin{cases}
        \begin{bmatrix}
            \mathbf{m}_{1,-}(x;\lambda) & \mathbf{m}_{2,+}(x;\lambda)
        \end{bmatrix},\quad\lambda\in\mathbb{C}^+,\\[2ex]
        \begin{bmatrix}
            \mathbf{m}_{2,-}(x;\lambda) & \mathbf{m}_{1,+}(x;\lambda)
        \end{bmatrix},\quad\lambda\in\mathbb{C}^-.
    \end{cases}
\end{equation*}
We showed above that,
\begin{equation*}
\mathbf m_{\ell,-}(x;\lambda)=e^{-i\lambda x}(\mathbf e_1+o(1)),\quad
\mathbf m_{\ell,+}(x;\lambda)=e^{i\lambda x}(\mathbf e_2+o(1)),\quad\ell\in\{1,2\},\quad |\lambda|\rightarrow\infty.
\end{equation*}
Consequently,
\begin{equation*}
\mathbf K(x;\lambda)
=
(\mathbb I + o(1))
\begin{bmatrix}
e^{-i\lambda x} & 0\\
0 & e^{i\lambda x}
\end{bmatrix}\quad \text{as}\quad |\lambda|\to\infty.
\end{equation*}
Using the scattering relations, we then have
\begin{equation*}
    \mathbf{K}^+(x;\lambda)=\mathbf{K}^-(x;\lambda)\begin{bmatrix}
        \frac{1}{A(\lambda)} & -\frac{B(\lambda)}{A(\lambda)}\\
        \frac{b(\lambda)}{A(\lambda)} & \frac{1}{A(\lambda)}
    \end{bmatrix}.
\end{equation*}
The determinant of the jump matrix is not unity:
\[
\det \begin{bmatrix}
\frac{1}{A(\lambda)} & -\frac{B(\lambda)}{A(\lambda)}\\
\frac{b(\lambda)}{A(\lambda)} & \frac{1}{A(\lambda)}
\end{bmatrix} = \frac{1}{A^2(\lambda)} + \frac{b(\lambda)B(\lambda)}{A^2(\lambda)} \neq 1.
\] 
We factor the jump matrix as
\begin{equation*}
    \begin{bmatrix}
        \frac{1}{A(\lambda)} & -\frac{B(\lambda)}{A(\lambda)}\\
        \frac{b(\lambda)}{A(\lambda)} & \frac{1}{A(\lambda)}
    \end{bmatrix}=\begin{bmatrix}
        1 & 0\\
        0 & \frac{1}{A(\lambda)}
    \end{bmatrix}\begin{bmatrix}
        \frac{1}{a(\lambda)A(\lambda)} & -\frac{B(\lambda)}{A(\lambda)}\\
        \frac{b(\lambda)}{a(\lambda)} & 1
    \end{bmatrix}\begin{bmatrix}
        a(\lambda) & 0\\
        0 & 1
    \end{bmatrix},
\end{equation*}
where $a(\lambda)$ is analytic in $\mathbb{C}^+$ and $A(\lambda)$ is analytic in $\mathbb{C}^-$. This factorization separates the analytic contributions in the upper and lower half-planes and will be useful in constructing an \gls{rhp} with analytic jump factors.
Next, define 
\begin{equation*}
    \mathbf{L}(x;\lambda)=\begin{cases}\begin{bmatrix}
        \frac{\mathbf{m}_{1,-}(x;\lambda)}{a(\lambda)} & \mathbf{m}_{2,+}(x;\lambda)   
    \end{bmatrix},\quad\lambda\in\mathbb{C}^+,\\[5pt]
    \begin{bmatrix}
        \mathbf{m}_{2,-}(x;\lambda) & \frac{\mathbf{m}_{1,+}(x;\lambda)}{A(\lambda)}
    \end{bmatrix},\quad\lambda\in\mathbb{C}^-.
    \end{cases}
\end{equation*}
Then
\[
\mathbf{L}^+(x;\lambda) = \mathbf{L}^-(x;\lambda) 
\begin{bmatrix}
\frac{1}{a(\lambda)A(\lambda)} & -\frac{B(\lambda)}{A(\lambda)}\\
\frac{b(\lambda)}{a(\lambda)} & 1
\end{bmatrix},\quad
\mathbf{L}(x;\lambda)=\left(\mathbb{I}+o(1)\right)
\begin{bmatrix}
e^{-i\lambda x} & 0\\
0 & e^{i\lambda x}
\end{bmatrix}\quad \text{as}\quad |\lambda|\to\infty.
\]
Using $\text{det}(\mathbf{S}(\lambda))=1$, we have
\begin{equation*}
    \frac{1}{a(\lambda)A(\lambda)}=1-\frac{b(\lambda)B(\lambda)}{a(\lambda)A(\lambda)},\quad\text{det}\begin{bmatrix}
        \frac{1}{a(\lambda)A(\lambda)} & -\frac{B(\lambda)}{A(\lambda)}\\
        \frac{b(\lambda)}{a(\lambda)} & 1
    \end{bmatrix}=1.
\end{equation*}
Finally, to adjust the asymptotic condition to the identity, define
\begin{equation*}
    \mathbf{T}(x;\lambda)=\mathbf{L}(x;\lambda)\begin{bmatrix}
        e^{i\lambda x} & 0\\
        0 & e^{-i\lambda x}
    \end{bmatrix}.
\end{equation*}
We then define the jump matrix
\begin{equation*}
    \mathbf{G}(x;\lambda)=\begin{bmatrix}
        1-\rho_1(\lambda)\rho_2(\lambda) & -\rho_2(\lambda)e^{-2i\lambda x}\\
        \rho_1(\lambda)e^{2i\lambda x} & 1
    \end{bmatrix},
\end{equation*}
where $\rho_1(\lambda)=b(\lambda)/a(\lambda)$ and $\rho_2(\lambda)=B(\lambda)/A(\lambda)$ are the reflection coefficients. This formulation corresponds to the right scattering problem. For the left scattering problem, the definitions of the reflection coefficients and the resulting jump matrix must be modified. These changes are discussed in Section~\ref{sec:LeftScattering}.

Consider the following homogeneous \gls{rhp}. 
\begin{rhp}\label{rhp:3}
    Find $\mathbf{T}(x;\diamond):\mathbb{C}\setminus\mathbb{R}\rightarrow\mathbb{C}^{2\times2}$ analytic such that for $\lambda\in\mathbb{R}$,
    \begin{align*}
        \mathbf{T}^+(x;\lambda)&=\mathbf{T}^-(x;\lambda)\mathbf{G}(x;\lambda),
        \end{align*}
        and
        \begin{align*}
        \mathbf{T}(x;\lambda) &= \mathbb{I}+o(1)\quad\text{as}\quad |\lambda|\rightarrow\infty.
    \end{align*}
\end{rhp}
The fact that $\det(\mathbf{G}(x;\lambda))=1$ implies that $\det(\mathbf{T}^+(x;\lambda))=\det(\mathbf{T}^-(x;\lambda))$ and $\det(\mathbf{T}(x;\infty))=1$, so that $\det(\mathbf{T}(x;\lambda))=1$. We now consider the following inhomogeneous \gls{rhp}. 
\begin{rhp}\label{rhp:4}
    Find $\mathbf{V}(x;\diamond):\mathbb{C}\setminus\mathbb{R}\rightarrow\mathbb{C}^{2\times2}$ analytic such that for $\lambda\in\mathbb{R}$,
    \begin{align*}
        \mathbf{V}^+(x;\lambda)&=\mathbf{V}^-(x;\lambda)\mathbf{G}(x;\lambda)+\mathbf{J}(x;\lambda),
        \end{align*}
        and
        \begin{align*}
        \mathbf{V}(x;\lambda)&=o(1)\quad\text{as}\quad|\lambda|\rightarrow\infty.
    \end{align*}
\end{rhp}
At this stage, the jump inhomogeneity $\mathbf{J}(x;\lambda)$ is left unspecified. It will be constructed subsequently so that the corresponding solution $\mathbf{V}(x;\lambda)$ encodes the desired transform. We can factor $\mathbf{G}(x;\lambda)=[\mathbf{T}^{-1}]^{-}(x;\lambda)\mathbf{T}^+(x;\lambda)$ from RHP~\ref{rhp:3}, allowing us to rewrite the jump condition from RHP~\ref{rhp:4} as
\begin{equation*}
    [\mathbf{V}\mathbf{T}^{-1}]^+(x;\lambda)=[\mathbf{V}\mathbf{T}^{-1}]^-(x;\lambda)+\mathbf{J}(x;\lambda)[\mathbf{T}^{-1}]^{+}(x;\lambda).
\end{equation*}
The conditions of RHPs~\ref{rhp:3} and~\ref{rhp:4} imply that $[\mathbf{V}\mathbf{T}^{-1}](x;\lambda)=o(1)$ as $|\lambda|\rightarrow\infty$. 
Assuming RHPs~\ref{rhp:3} and~\ref{rhp:4} admit solutions with the prescribed jump and normalization conditions, the Plemelj lemma implies
\begin{equation*}
    [\mathbf{V}\mathbf{T}^{-1}](x;\lambda)=\frac{1}{2\pi i}\int_{-\infty}^\infty \frac{\mathbf{J}(x;\lambda')[\mathbf{T}^{-1}]^+(x;\lambda')}{\lambda'-\lambda}\mathrm{d}\lambda'.
\end{equation*}
Multiplying by $\lambda$ and taking a limit as $|\lambda|\rightarrow\infty$ results in
\begin{equation}\label{eq:InhomoRHP}
    \lim_{|\lambda|\rightarrow\infty}\lambda\mathbf{V}(x;\lambda)\mathbf{T}^{-1}(x;\lambda)=\lim_{|\lambda|\rightarrow\infty}\lambda\mathbf{V}(x;\lambda)=-\frac{1}{2\pi i}\int_{-\infty}^\infty\mathbf{J}(x;\lambda')[\mathbf{T}^{-1}]^+(x;\lambda')\mathrm{d}\lambda',
\end{equation}
where the last equality follows from yet another application of the dominated convergence theorem. Recall from~\eqref{eq:InvTrans} that
\begin{align*}
    \mathbf{f}(x)=
    -\frac{\sigma_3}{2\pi}\int_{-\infty}^\infty \left(c_+(\lambda)\mathbf{m}_{2,+}(x;\lambda)+c_-(\lambda)\mathbf{m}_{2,-}(x;\lambda)\right)\mathrm{d}\lambda.
\end{align*}
This tells us we can expect to be able to recover $\mathbf{f}(x)$ if we choose $\mathbf{J}(x;\lambda)$ such that $\mathbf{J}(x;\lambda)[\mathbf{T}^{-1}]^+(x;\lambda)$ contains $c_+(\lambda)\mathbf{m}_{2,+}(x;\lambda)$ and $c_-(\lambda)\mathbf{m}_{2,-}(x;\lambda)$, respectively.

We have 
\begin{equation*}
    [\mathbf{T}^{-1}]^+(x;\lambda)=\begin{bmatrix}
        m_{2,+}^{(2)}(x;\lambda)e^{-i\lambda x} & -m_{2,+}^{(1)}(x;\lambda)e^{-i\lambda x}\\
        -\frac{m_{1,-}^{(2)}(x;\lambda)}{a(\lambda)}e^{i\lambda x} & \frac{m_{1,-}^{(1)}(x;\lambda)}{a(\lambda)}e^{i\lambda x}
    \end{bmatrix}.
\end{equation*}
To recover \(c_+(\lambda)\mathbf{m}_{2,+}(x;\lambda)\), define $\mathbf{j}_1(x;\lambda)=\begin{bmatrix}
    c_+(\lambda)e^{i\lambda x} & 0
\end{bmatrix}$. Then we find that
\begin{equation*}
    \mathbf{j}_1(x;\lambda)[\mathbf{T}^{-1}]^+(x;\lambda)=c_+(\lambda)\begin{bmatrix}
        m_{2,+}^{(2)}(x;\lambda) & -m_{2,+}^{(1)}(x;\lambda)
    \end{bmatrix},
\end{equation*}
giving us access to $c_+(\lambda)\mathbf{m}_{2,+}(x;\lambda)$, as desired. We now want to make an analogous choice for $\mathbf{j}_2(x;\lambda)$ to be able to extract $c_-(\lambda)\mathbf{m}_{2,-}(x;\lambda)$. This is not possible using $[\mathbf{T}^{-1}]^+(x;\lambda)$ in its current form. We instead need to rewrite $[\mathbf{T}^{-1}]^+(x;\lambda)$ using RHP~\ref{rhp:3}, resulting in $[\mathbf{T}^{-1}]^+(x;\lambda)=\mathbf{G}^{-1}(x;\lambda)[\mathbf{T}^{-1}]^-(x;\lambda)$.
Choosing $\mathbf{j}_2(x;\lambda)=\begin{bmatrix}
    -\rho_1(\lambda)c_-(\lambda)e^{i\lambda x} & -c_-(\lambda)e^{-i\lambda x}
\end{bmatrix}$ then results in
\begin{equation*}
    \mathbf{j}_2(x;\lambda)[\mathbf{T}^{-1}]^+(x;\lambda) = c_-(\lambda)\begin{bmatrix}
        m_{2,-}^{(2)}(x;\lambda) & -m_{2,-}^{(1)}(x;\lambda)
    \end{bmatrix}.
\end{equation*}
Let 
\begin{equation*}
    \mathbf{J}(x;\lambda)=\begin{bmatrix}
        \mathbf{j}_1(x;\lambda)\\
        \mathbf{j}_2(x;\lambda)
    \end{bmatrix}.
\end{equation*}
Then
\begin{equation*}
    \mathbf{J}(x;\lambda)[\mathbf{T}^{-1}]^+(x;\lambda)=\mathbf{A}(x;\lambda)=\begin{bmatrix}
        c_+(\lambda)m_{2,+}^{(2)}(x;\lambda) & -c_+(\lambda)m_{2,+}^{(1)}(x;\lambda)\\
        c_-(\lambda)m_{2,-}^{(2)}(x;\lambda) & -c_-(\lambda)m_{2,-}^{(1)}(x;\lambda)
    \end{bmatrix}=\begin{bmatrix}
        A^{(11)} & A^{(12)}\\
        A^{(21)} & A^{(22)}
    \end{bmatrix}.
\end{equation*}
It is then clear that the inverse transform can be formulated via
\begin{equation*}
    \mathbf{f}(x) = -\frac{\sigma_3}{2\pi}\int_{-\infty}^\infty\begin{bmatrix}
        -(A^{(12)}+A^{(22)})\\
        A^{(11)} + A^{(21)}
    \end{bmatrix}\mathrm{d}\lambda=-\frac{\sigma_3}{2\pi}\int_{-\infty}^\infty\left(\text{diagvec}\left(\begin{bmatrix}
        -1 & -1\\
        1 & 1
    \end{bmatrix}\mathbf{A}(x;\lambda)\sigma_1\right)\right)\mathrm{d}\lambda.
\end{equation*}
Here \(\operatorname{diagvec}(\mathbf{M}):=\begin{bmatrix}
    M^{(11)} & M^{(22)}
\end{bmatrix}^T\). We can then use~\eqref{eq:InhomoRHP} to write $\mathbf{f}(x)$ in terms of the solution to RHP~\ref{rhp:4} as follows:
\begin{equation}\label{eq:InverseTransInhomo}
    \mathbf{f}(x)=i\sigma_3\left(\lim_{\substack{\lambda=i\eta\\ \eta\rightarrow+\infty}}\left[\lambda\cdot\text{diagvec}\left(\begin{bmatrix}
        -1 & -1\\
        1 & 1
    \end{bmatrix}\mathbf{V}(x;\lambda)\sigma_1\right)\right]\right).
\end{equation}
While solving \glspl{ode} naturally produces a forward transform, the formulation of a systematic inverse-transform framework remains comparatively underdeveloped.
The alternate representation of the inverse transform given by~\eqref{eq:InverseTransInhomo} shows that solving an inhomogeneous \gls{rhp} leads to the inverse transform. 

\begin{remark}
    Although the inverse transform can be formulated via an inhomogeneous \gls{rhp}, we do not employ this representation in our numerical implementation. This representation is included for its potential theoretical implications.
\end{remark}

\subsection{Accounting for Poles}\label{sec:DiracPoles}

Up to this point, we have assumed $a(\lambda)\neq 0$ and $A(\lambda)\neq 0$, corresponding to the self-adjoint setting ($\tau=1$), where no discrete spectrum arises. In the non-self-adjoint case ($\tau=-1$), the scattering coefficients may admit zeros in $\mathbb{C}^\pm$, giving rise to discrete spectral contributions. Such zeros introduce poles into our \gls{rhp}, and we must account for them to properly recover the solution. Let $\{z_{j,+}\}_{j=1}^{n}$ denote the zeros of $a(\lambda)$ in $\mathbb{C}^+$, and $\{z_{j,-}\}_{j=1}^{n}$ denote the zeros of $A(\lambda)$ in $\mathbb{C}^-$. We assume a finite number of simple zeros.  In the specific examples we consider below, we have exponentially decaying potential functions, guaranteeing that these sets are finite. Moreover, $a(\lambda), A(\lambda)\neq0$ for all $\lambda\in\mathbb{R}$.

For $\lambda\in\mathbb{C}^+$, the residues of $\mathbf{v}(x;\lambda)$ at the zeros of $a(\lambda)$ are
\begin{align*}
    \mathrm{Res}_{\lambda=z_{j,+}}\mathbf{v}(x;\lambda) = \mathbf{w}_{j,+}(x;z_{j,+}) 
    &= \frac{1}{a'(z_{j,+})}\mathbf{m}_{2,+}(x;z_{j,+}) 
    \int_{-\infty}^{\infty} \begin{bmatrix}-m_{1,-}^{(2)}(s;z_{j,+}) & m_{1,-}^{(1)}(s;z_{j,+})\end{bmatrix} \mathbf{f}(s)\, \mathrm{d}s\\
    &=\frac{a(z_{j,+})c_+(z_{j,+})}{a'(z_{j,+})}\mathbf{m}_{2,+}(x;z_{j,+}),
\end{align*}
and similarly, for $\lambda\in\mathbb{C}^-$, the residues of $\mathbf{v}(x;\lambda)$ at the zeros of $A(\lambda)$ are
\begin{align*}
    \mathrm{Res}_{\lambda=z_{j,-}}\mathbf{v}(x;\lambda) = \mathbf{w}_{j,-}(x;z_{j,-}) 
    &= \frac{1}{A'(z_{j,-})}\mathbf{m}_{2,-}(x;z_{j,-}) 
    \int_{-\infty}^{\infty} \begin{bmatrix} m_{1,+}^{(2)}(s;z_{j,-}) & -m_{1,+}^{(1)}(s;z_{j,-}) \end{bmatrix} \mathbf{f}(s)\, \mathrm{d}s\\
    &=-\frac{A(z_{j,-})c_-(z_{j,-})}{A'(z_{j,-})}\mathbf{m}_{2,-}(x;z_{j,-}).
\end{align*}
Hence, $\mathbf{v}(x;\lambda)$ can be expressed as
\begin{equation*}
    \mathbf{v}(x;\lambda)=\frac{1}{2\pi i}\int_{-\infty}^\infty\frac{c_+(\lambda')\mathbf{m}_{2,+}(x;\lambda')+c_-(\lambda')\mathbf{m}_{2,-}(x;\lambda')}{\lambda'-\lambda}\mathrm{d}\lambda' + \sum_{j=1}^{n}\frac{\mathbf{w}_{j,+}(x;z_{j,+})}{\lambda-z_{j,+}} + \sum_{j=1}^{n}\frac{\mathbf{w}_{j,-}(x;z_{j,-})}{\lambda-z_{j,-}}.
\end{equation*}
Using Lemma~\ref{lemma:Recovery}, the recovery formula for $\mathbf{f}(x)$ including contributions from the poles is
\begin{align}\label{eq:RecoveryPoles}
    \mathbf{f}(x)&=
    i\sigma_3\lim_{\substack{\lambda=i\eta\\ \eta\rightarrow + \infty}}\lambda\mathbf{v}(x;\lambda)\\\nonumber
    &=-\frac{\sigma_3}{2\pi}\int_{-\infty}^\infty \left(c_+(\lambda)\mathbf{m}_{2,+}(x;\lambda)+c_-(\lambda)\mathbf{m}_{2,-}(x;\lambda)\right)\mathrm{d}\lambda + i\sigma_3\sum_{j=1}^{n}\frac{a(z_{j,+})c_+(z_{j,+})}{a'(z_{j,+})}\mathbf{m}_{2,+}(x;z_{j,+})\\
    &\quad\quad - i\sigma_3\sum_{j=1}^{n}\frac{A(z_{j,-})c_-(z_{j,-})}{A'(z_{j,-})}\mathbf{m}_{2,-}(x;z_{j,-}).\nonumber
\end{align}

We note that a zero of $a(\lambda)$ at $\lambda=z_{j,+}$ implies that $\mathbf{m}_{1,-}(x;z_{j,+})=b_{j,+}\mathbf{m}_{2,+}(x;z_{j,+})$ for some constant $b_{j,+}$. Therefore, 
\begin{equation*}
    \text{Res}_{\lambda=z_{j,+}}\frac{\mathbf{m}_{1,-}(x;\lambda)}{a(\lambda)}=\frac{\mathbf{m}_{1,-}(x;z_{j,+})}{a'(z_{j,+})}=\frac{b_{j,+}}{a'(z_{j,+})}\mathbf{m}_{2,+}(x;z_{j,+}).
\end{equation*}
Similarly, for a constant $b_{j,-}$ satisfying $\mathbf{m}_{1,+}(x;z_{j,-})=b_{j,-}\mathbf{m}_{2,-}(x;z_{j,-})$,
\begin{equation*}
    \text{Res}_{\lambda=z_{j,-}}\frac{\mathbf{m}_{1,+}(x;\lambda)}{A(\lambda)}=\frac{\mathbf{m}_{1,+}(x;z_{j,-})}{A'(z_{j,-})}=\frac{b_{j,-}}{A'(z_{j,-})}\mathbf{m}_{2,-}(x;z_{j,-}).
\end{equation*}
This leads us to define the norming constants
\begin{equation}\label{eq:normingConstants}
    c_{j,+}=\frac{b_{j,+}}{a'(z_{j,+})},\quad c_{j,-}=\frac{b_{j,-}}{A'(z_{j,-})}.
\end{equation}


\subsection{Computation}\label{sec:DiracComputation}

Again using our Fourier transform example as a guide, we break down the computation of generalized transform pairs as follows:
\begin{enumerate}
    \item (\emph{Forward transform}) Solve the associated \glspl{ode} numerically using the \gls{uclm} to compute the scattering data, comprising both continuous spectral functions $c_\pm(\lambda)$ and discrete spectral data (poles), with the latter detailed in Section~\ref{sec:PolesComputation}.
    \item (\emph{Inverse transform}) Approximate a solution to the associated \gls{rhp} by solving the equivalent singular integral equation numerically. Use this solution to reconstruct the generalized eigenfunctions as a function of the spectral variable, and evaluate the inverse transform.
\end{enumerate}

\textbf{Step 1:} We obtain computationally convenient expressions for $c_+(\lambda)$ and $c_-(\lambda)$ using the Levin-type ideas described in Section~\ref{sec:Fourier}.
Let $\mathbf{p}_\pm:\mathbb{R}\rightarrow\mathbb{C}^{2\times 1}$ satisfy
\begin{align*}
    &\mathbf{p}_\pm'(x)+\left(i\lambda\sigma_3-\mathbf{Q}(x)\right)\mathbf{p}_\pm(x)=\mathbf{f}(x),\quad\lambda\in\mathbb{R},\\
    &\mathbf{p}_\pm(\pm\infty)=\mathbf{0}.
\end{align*}
The boundary value problems for $\mathbf{p}_\pm$ are solved on a truncated interval $[0,L]$ (or $[L,0]$ for $L<0$) via a Chebyshev spectral discretization. The decay condition at $\pm\infty$ is enforced at the finite endpoint through homogeneous Dirichlet conditions.
We find that
\begin{equation*}
    \mathbf{p}_{\pm}(x;\lambda)=
    \begin{cases}
        \frac{1}{a(\lambda)}\mathbf{M}_+(x;\lambda)\begin{bmatrix}
            \int_{\pm\infty}^x\begin{bmatrix}
                m_{2,+}^{(2)}(s;\lambda) & -m_{2,+}^{(1)}(s;\lambda)
            \end{bmatrix}\mathbf{f}(s)\mathrm{d}s\\[.5em]
            \int_{\pm\infty}^x\begin{bmatrix}
                -m_{1,-}^{(2)}(s;\lambda) & m_{1,-}^{(1)}(s;\lambda)
            \end{bmatrix}\mathbf{f}(s)\mathrm{d}s
        \end{bmatrix},\quad\lambda\in\mathbb{C}^+,\\[2em]
        \frac{1}{A(\lambda)}\mathbf{M}_-(x;\lambda)\begin{bmatrix}
            \int_{\pm\infty}^x\begin{bmatrix}
                m_{1,+}^{(2)}(s;\lambda) & -m_{1,+}^{(1)}(s;\lambda)
            \end{bmatrix}\mathbf{f}(s)\mathrm{d}s\\[.5em]
            \int_{\pm\infty}^x\begin{bmatrix}
                -m_{2,-}^{(2)}(s;\lambda) & m_{2,-}^{(1)}(s;\lambda)
            \end{bmatrix}\mathbf{f}(s)\mathrm{d}s
        \end{bmatrix},\quad\lambda\in\mathbb{C}^-.
    \end{cases}
\end{equation*}
Then $\mathbf{p}_+$ and $\mathbf{p}_-$ satisfy
\begin{align*}
(\mathbf{p}_+-\mathbf{p}_-)(x;\lambda)=
\begin{cases}
\left(-\frac{1}{a(\lambda)}\int_{-\infty}^\infty
\begin{bmatrix}
m_{2,+}^{(2)}(s;\lambda) & -m_{2,+}^{(1)}(s;\lambda)
\end{bmatrix}\mathbf{f}(s)\mathrm{d}s\right)\mathbf{m}_{1,-}(x;\lambda)\\
\hspace{2em} -\,c_+(\lambda)\mathbf{m}_{2,+}(x;\lambda), \quad\quad\quad\quad\quad\quad\quad\quad\quad\quad\quad\quad\quad\quad\quad\quad\quad\quad\quad \lambda\in\mathbb{C}^+,\\[0.6em]
c_-(\lambda)\mathbf{m}_{2,-}(x;\lambda)\\
\hspace{2em} -\left(\frac{1}{A(\lambda)}\int_{-\infty}^\infty
\begin{bmatrix}
-m_{2,-}^{(2)}(s;\lambda) & m_{2,-}^{(1)}(s;\lambda)
\end{bmatrix}\mathbf{f}(s)\mathrm{d}s\right)\mathbf{m}_{1,+}(x;\lambda),
\quad \lambda\in\mathbb{C}^-.
\end{cases}
\end{align*}
Taking the Wronskian of $(\mathbf{p}_+-\mathbf{p}_-)(x;\lambda)$ with respect to 
$\mathbf{m}_{1,-}(x;\lambda)$ and 
$\mathbf{m}_{1,+}(x;\lambda)$ for $\lambda \in \mathbb{R}$ gives
\begin{align*}
    c_+(\lambda)&=-\frac{\text{W}((\mathbf{p}_+-\mathbf{p}_-)(x;\lambda),\mathbf{m}_{1,-}(x;\lambda))}{\text{W}(\mathbf{m}_{2,+}(x;\lambda),\mathbf{m}_{1,-}(x;\lambda))}=\frac{\text{W}((\mathbf{p}_+-\mathbf{p}_-)(x;\lambda),\mathbf{m}_{1,-}(x;\lambda))}{a(\lambda)},\\
    c_-(\lambda)&=\frac{\text{W}((\mathbf{p}_+-\mathbf{p}_-)(x;\lambda),\mathbf{m}_{1,+}(x;\lambda))}{\text{W}(\mathbf{m}_{2,-}(x;\lambda),\mathbf{m}_{1,+}(x;\lambda))}=\frac{\text{W}((\mathbf{p}_+-\mathbf{p}_-)(x;\lambda),\mathbf{m}_{1,+}(x;\lambda))}{A(\lambda)}.
\end{align*}
Thus, the forward transforms are expressed in terms of solutions to boundary-value problems, which are computationally more convenient than the original oscillatory integral forms.
To compute solutions to the boundary-value problems, we again use the the ultraspherical rectangular collocation method \cite{trogdon_ultraspherical_2024}, essentially amounting to a \gls{uclm} implementation. Figure~\ref{fig:DiracForward} shows the forward transforms $c_+(\lambda)$ and $c_-(\lambda)$ for $\mathbf{f}(x)=\begin{bmatrix}
    \exp(-x^2)&\exp(-x^2)
\end{bmatrix}^T$, $q(x)=\exp(-x^2)$, and $\tau=1$. Figure~\ref{fig:cp_err} shows the maximum absolute error between $c_+(\lambda)$ and a highly resolved computation of $c_+(\lambda)$ as a function of the number of nodes used.
\begin{figure}[htbp]
    \centering
    \begin{subfigure}[b]{0.49\textwidth}
        \centering
        \includegraphics[width=\textwidth]{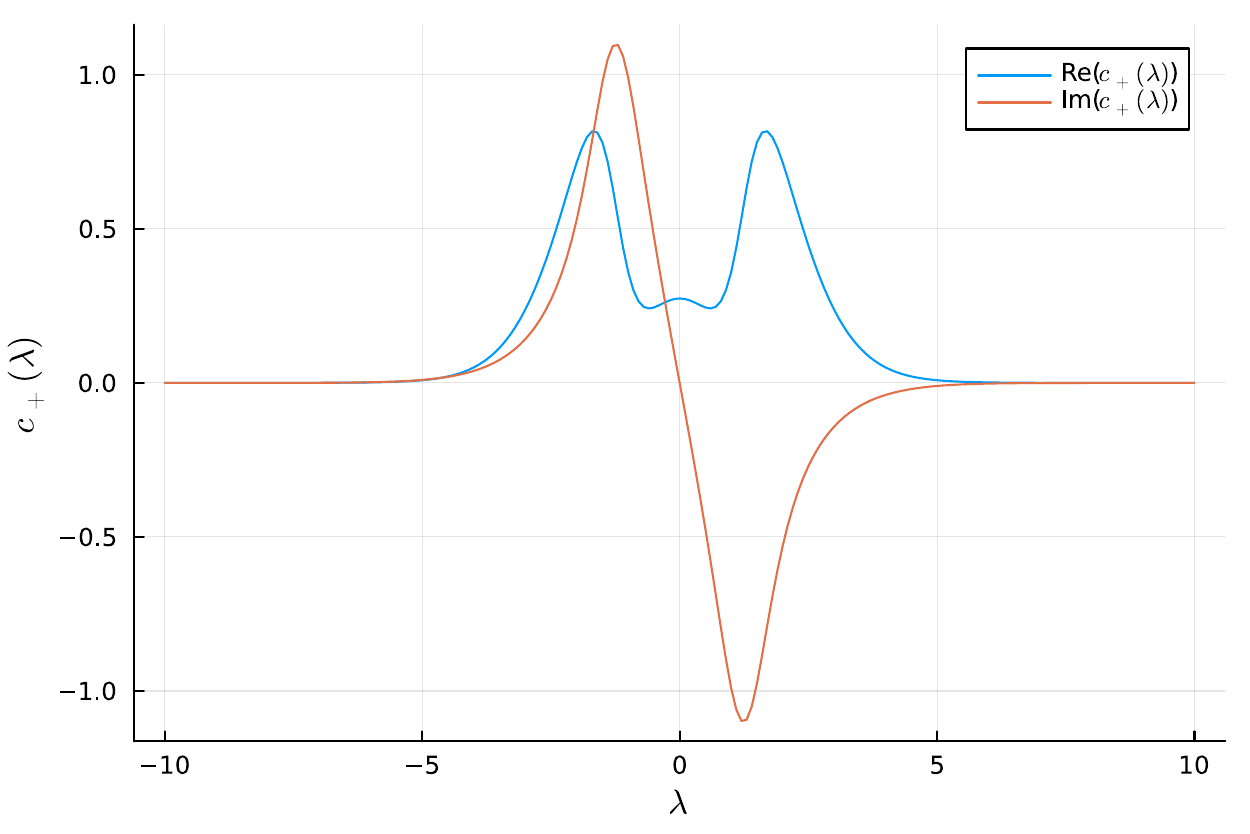}
        \caption{$c_+(\lambda)$}
        \label{fig:Dirac_cp}
    \end{subfigure}
    \hfill
    \begin{subfigure}[b]{0.49\textwidth}
        \centering
        \includegraphics[width=\textwidth]{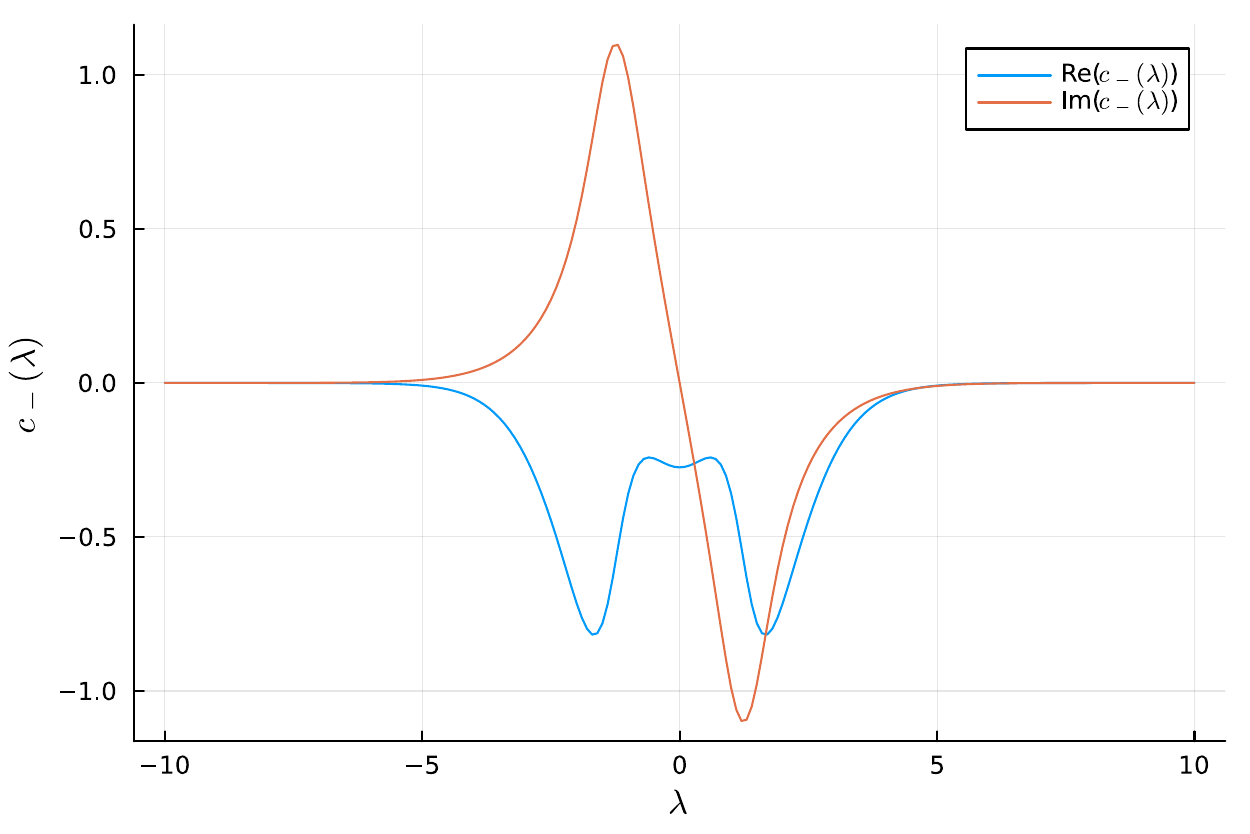}
        \caption{$c_-(\lambda)$}
        \label{fig:Dirac_cm}
    \end{subfigure}
    \caption{Forward transforms $c_+(\lambda)$ and $c_-(\lambda)$ of $f(x)=\begin{bmatrix}
    \exp(-x^2)&\exp(-x^2)
\end{bmatrix}^T$ and $q(x)=\exp(-x^2)$ computed using the \gls{uclm}. The real parts are negatives of one another, while the imaginary parts agree, reflecting an underlying conjugation symmetry between the forward transforms.}
    \label{fig:DiracForward}
\end{figure}

\begin{figure}
    \centering
    \includegraphics[width=0.5\linewidth]{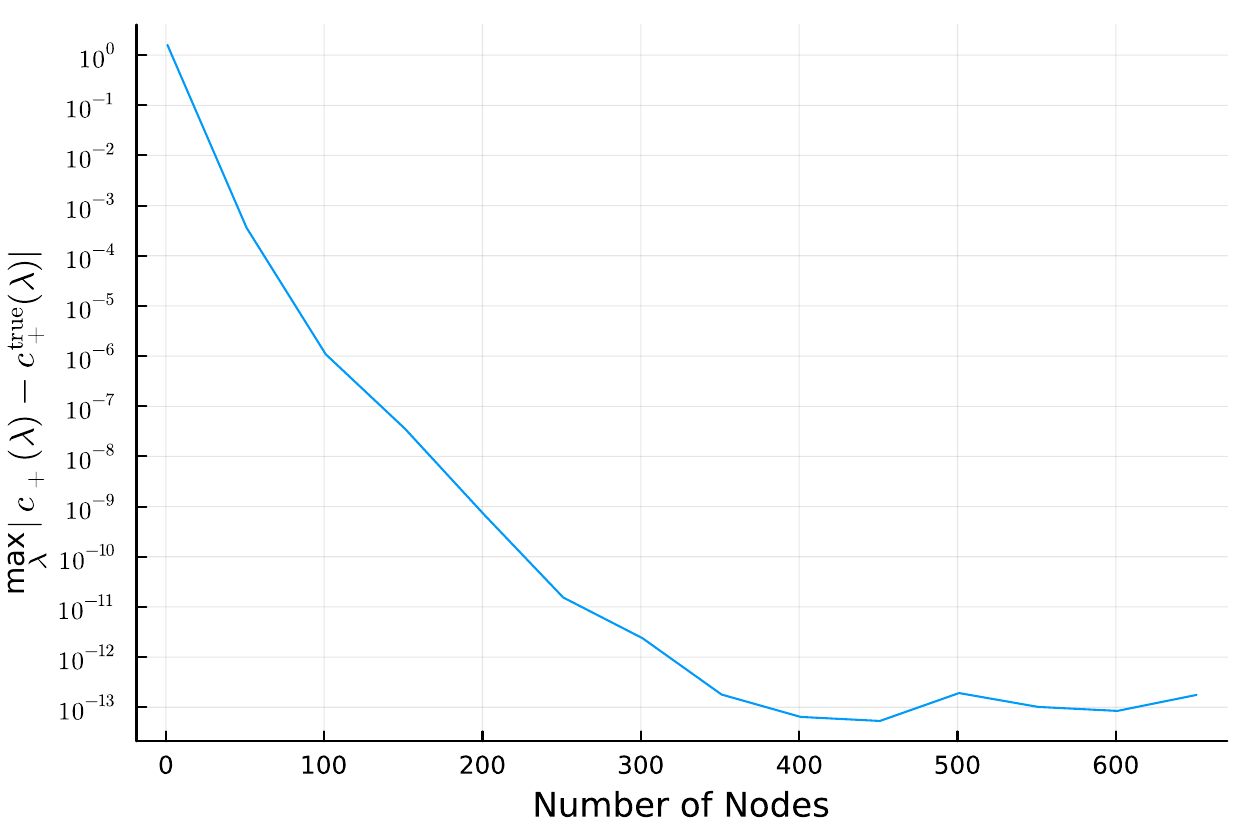}
    \caption{Max absolute error between $c_+(\lambda)$ and a reference solution computed via the same method using 550 coefficients, for which the coefficients decay to machine precision evaluated on a uniform grid in $\lambda \in [-10,10]$ as a function of the number of nodes used.}
    \label{fig:cp_err}
\end{figure}

\begin{remark}
    Note that we could alternatively take the Wronskian of $(\mathbf{p}_+-\mathbf{p}_-)(x;\lambda)$ with respect to $\mathbf{m}_{2,+}(x;\lambda)$ (using the $\mathbb{C}^+$ expression) and with respect to $\mathbf{m}_{2,-}(x;\lambda)$ (using the $\mathbb{C}^-$ expression) to solve for 
    \begin{equation*}
        -\int_{-\infty}^\infty\begin{bmatrix}
            m_{2,+}^{(2)}(s;\lambda) & -m_{2,+}^{(1)}(s;\lambda)
        \end{bmatrix}\mathbf{f}(s)\mathrm{d}s\quad\text{and}\quad -\int_{-\infty}^\infty\begin{bmatrix}
            -m_{2,-}^{(2)}(s;\lambda) & m_{2,-}^{(1)}(s;\lambda)
        \end{bmatrix}\mathbf{f}(s)\mathrm{d}s,
    \end{equation*}
    respectively. This alternative Wronskian approach is useful when computing $\hat{\mathbf{w}}_{j,\pm}$ in the recovery formula for left scattering (see Section~\ref{sec:LeftScatteringPoles}).
\end{remark}

\textbf{Step 2:} We now compute the inverse transform. RHP~\ref{rhp:3} is useful here because its solution is given by
\begin{equation}\label{eq:DiracRHPSol}
    \mathbf{T}^+(x;\lambda)=\begin{bmatrix}
        \frac{\mathbf{m}_{1,-}(x;\lambda)}{a(\lambda)} & \mathbf{m}_{2,+}(x;\lambda)
    \end{bmatrix}\quad\text{and}\quad\mathbf{T}^-(x;\lambda)=\begin{bmatrix}
        \mathbf{m}_{2,-}(x;\lambda) & \frac{\mathbf{m}_{1,+}(x;\lambda)}{A(\lambda)}
    \end{bmatrix}.
\end{equation}
We compute an approximate solution to RHP~\ref{rhp:3}, resulting in an approximation of $\mathbf{m}_{2,\pm}(x;\lambda)$ as a function of $\lambda$.
For an unknown function $\mathbf{U}(x;\lambda)$ set 
\begin{equation*}
    \mathbf{T}^\pm(x;\lambda)=\mathcal{C}^\pm_\mathbb{R}\mathbf{U}(x;\lambda)+\mathbb{I},
\end{equation*}
where the Cauchy operators $\mathcal{C}^\pm_\mathbb{R}$ are defined by
\begin{equation*}
    \mathcal{C}^\pm_{\mathbb{R}}g(x)=\mathcal{C}^\pm_\mathbb{R}\left[g\right](x)=\lim_{\epsilon\rightarrow0^+}\frac{1}{2\pi i}\int_\mathbb{R}\frac{g(s)}{s-(x\pm i\epsilon)}\mathrm{d}s.
\end{equation*}
It is known that if $g\in L^2(\mathbb{R},\mathbb{C})$ then this limit exists almost everywhere and is an $L^2(\mathbb{R},\mathbb{C})$ function that satisfies $\|\mathcal{C}^\pm_\mathbb{R}g\|_{L^2(\mathbb{R},\mathbb{C})}\leq\|g\|_{L^2(\mathbb{R},\mathbb{C})}$, see~\cite{deift_orthogonal_2000,titchmarsh_e_c_introduction_1948}. With this ansatz, the jump condition in RHP~\ref{rhp:3} becomes
\begin{align}\label{eq:DiracSingInt1}
    \mathcal{C}^+_\mathbb{R}\left[\mathbf{U}(x;\diamond)\right](\lambda)-\mathcal{C}^-_\mathbb{R}\left[\mathbf{U}(x;\diamond)\right](\lambda)\mathbf{G}(x;\lambda)=\mathbf{G}(x;\lambda)-\mathbb{I}.
\end{align}
Factoring $\mathbf{G}(x;\lambda)$ into upper- and lower-triangular parts,
\begin{equation*}
    \mathbf{G}(x;\lambda)=\begin{bmatrix}
        1-\rho_1(\lambda)\rho_2(\lambda) & -\rho_2(\lambda)e^{-2i\lambda x}\\
        \rho_1(\lambda)e^{2i\lambda x} & 1
    \end{bmatrix}=\begin{bmatrix}
        1 & -\rho_2(\lambda)e^{-2i\lambda x}\\
        0 & 1
    \end{bmatrix}\begin{bmatrix}
        1 & 0\\
        \rho_1(\lambda)e^{2i\lambda x} & 1
    \end{bmatrix}=\mathbf{G}_u(x;\lambda)\mathbf{G}_l(x;\lambda),
\end{equation*}
gives
\begin{equation*}
    \mathcal{C}^+_\mathbb{R}\left[\mathbf{U}(x;\diamond)\right](\lambda)\mathbf{G}_l^{-1}(x;\lambda)-\mathcal{C}^-_\mathbb{R}\left[\mathbf{U}(x;\diamond)\right](\lambda)\mathbf{G}_u(x;\lambda)=\mathbf{G}_u(x;\lambda)-\mathbf{G}_l^{-1}(x;\lambda).
\end{equation*}

Applying the fact that $\mathcal{C}^+_\mathbb{R}-\mathcal{C}^-_{\mathbb{R}}=\mathbb{I}$ for $L^2(\mathbb{R},\mathbb{C})$ functions gives
\begin{equation*}
\begin{aligned}
\mathbf{U}(x;\lambda)
&+ \mathcal{C}^+_\mathbb{R}\!\left[\mathbf{U}(x;\diamond)\right](\lambda)
\begin{bmatrix}
0 & 0\\
-\rho_1(\lambda)e^{2i\lambda x} & 0
\end{bmatrix} - \mathcal{C}^-_\mathbb{R}\!\left[\mathbf{U}(x;\diamond)\right](\lambda)
\begin{bmatrix}
0 & -\rho_2(\lambda)e^{-2i\lambda x}\\
0 & 0
\end{bmatrix} \\
&=
\begin{bmatrix}
0 & -\rho_2(\lambda)e^{-2i\lambda x}\\
\rho_1(\lambda)e^{2i\lambda x} & 0
\end{bmatrix}.
\end{aligned}
\end{equation*}
This results in the following two systems of singular integral equations
\begin{align*}
    &\begin{cases}
    U_{11}(x;\lambda) + \mathcal{C}^+_\mathbb{R}\left[U_{12}(x;\diamond)\right](\lambda)\left(-\rho_1(\lambda)e^{2i\lambda x}\right)=0,\\
    U_{12}(x;\lambda) + \mathcal{C}^-_\mathbb{R}\left[U_{11}(x;\diamond)\right](\lambda)\left(\rho_2(\lambda)e^{-2i\lambda x}\right)=-\rho_2(\lambda)e^{-2i\lambda x},
    \end{cases}\\
    &\begin{cases}
    U_{21}(x;\lambda)+\mathcal{C}^+_\mathbb{R}\left[U_{22}(x;\diamond)\right](\lambda)\left(-\rho_1(\lambda)e^{2i\lambda x}\right)=\rho_1(\lambda)e^{2i\lambda x},\\
    U_{22}(x;\lambda)+\mathcal{C}^-_\mathbb{R}\left[U_{21}(x;\diamond)\right](\lambda)\left(\rho_2(\lambda)e^{-2i\lambda x}\right)=0.
    \end{cases}
\end{align*}
The first two equations represent a system that can be solved for the first row of $\mathbf{U}(x;\lambda)$, $\mathbf{u}_1(x;\lambda)$, while the last two equations represent a system that can be solved for the second row of $\mathbf{U}(x;\lambda)$, $\mathbf{u}_2(x;\lambda)$. Define the operator $\mathcal{B}$ by
\begin{equation*}
    \mathcal{B}\mathbf{u}:= \mathbf{u}(x;\lambda)+\begin{bmatrix}
        \mathcal{C}^+_\mathbb{R}\left[u^{(2)}(x;\diamond)\right](\lambda)\left(-\rho_1(\lambda)e^{2i\lambda x}\right) & \mathcal{C}^-_{\mathbb{R}}\left[u^{(1)}(x;\diamond)\right](\lambda)\left(\rho_2(\lambda)e^{-2i\lambda x}\right)
    \end{bmatrix}.
\end{equation*}
To compute $\mathbf{U}(x;\lambda)$, we use the infinite-dimensional \gls{gmres} to solve
\begin{equation*}
    \mathcal{B}\mathbf{u}_1(x;\lambda)=\begin{bmatrix}
        0 & -\rho_2(\lambda)e^{-2i\lambda x}
    \end{bmatrix}\quad\text{and}\quad\mathcal{B}\mathbf{u}_2(x;\lambda)=\begin{bmatrix}
        \rho_1(\lambda)e^{2i\lambda x} & 0
    \end{bmatrix},
\end{equation*}
for $\mathbf{u}_1(x;\lambda)$ and $\mathbf{u}_2(x;\lambda)$, respectively. The oscillatory functions $-\rho_1(\lambda)e^{2i\lambda x}$ and $-\rho_2(\lambda)e^{-2i\lambda x}$ are projected onto the oscillatory rational basis functions~\cite{trogdon_application_2015}, $R_{j,\alpha}(\lambda)$, described in Appendix~\ref{sec:RationalBasis}.  We use the so-called infinite-dimensional \gls{gmres} framework, in which the operator equation is interpreted on the coefficient space induced by the oscillatory rational basis $\{R_{j,\alpha}\}$. Functions are expanded as infinite series in the basis, and \gls{gmres} is applied formally in this infinite-dimensional sequence space, with all computations carried out on truncated expansions. 
Inner products of such functions are readily computed. After each operation in the iteration, coefficients with magnitude below $10^{-15}$ are discarded (``chopped'').  The output of \gls{gmres}, at a given value of $x$, is an approximation of the form
\begin{equation*}
    \mathbf{u}(x;\lambda)\approx\sum_{\alpha'}\sum_{j=-N}^Nd_j(\alpha')R_{j,\alpha'}(\lambda),
\end{equation*}
where the outer sum is taken over a finite collection of parameters $\{\alpha'\}$. Recall that 
\begin{align*}
    \mathbf{T}^+(x;\lambda)&=\mathcal{C}^\pm_\mathbb{R}\left[\mathbf{U}(x;\diamond)\right](\lambda)+\mathbb{I}=\begin{bmatrix}
        \frac{\mathbf{m}_{1,-}(x;\lambda)}{a(\lambda)} & \mathbf{m}_{2,+}(x;\lambda)
    \end{bmatrix},\\
    \mathbf{T}^-(x;\lambda)&=\mathcal{C}^-_\mathbb{R}\left[\mathbf{U}(x;\diamond)\right](\lambda)+\mathbb{I}=\begin{bmatrix}
        \mathbf{m}_{2,-}(x;\lambda) & \frac{\mathbf{m}_{1,+}(x;\lambda)}{A(\lambda)}
    \end{bmatrix}.
\end{align*}
The original function $\mathbf{f}(x)$ can then be recovered via
\begin{equation}\label{eq:DiracCompInvTrans}
    \mathbf{f}(x)=-\frac{\sigma_3}{2\pi}\int_{-\infty}^\infty \left(c_+(\lambda)\mathbf{T}^+_2(x;\lambda)+c_-(\lambda)\mathbf{T}^-_1(x;\lambda)\right)\mathrm{d}\lambda,
\end{equation}
where $\mathbf{T}^\pm_n$ denotes the $n$-th column of $\mathbf{T}^\pm$. 

We now approximate the integral~\eqref{eq:DiracCompInvTrans}. Our approach is to expand the integrand in the oscillatory rational basis $\{R_{j,\alpha}\}$, exploit that this basis is closed under the action of Cauchy operators and multiplication, and then evaluate the resulting integral using explicit formulas for the integrals of the basis functions (see Appendix~\ref{sec:RationalBasis} for details).

The integrand of~\eqref{eq:DiracCompInvTrans} is approximated as a linear combination of basis functions:
\begin{equation*}
    c_+(\lambda)\mathbf{T}^+_2(x;\lambda)+c_-(\lambda)\mathbf{T}^-_1(x;\lambda)\approx\sum_{\alpha',j}c_j(\alpha')R_{j,\alpha'}(\lambda).
\end{equation*}
Then $\mathbf{f}(x)$ is computed via
\begin{align*}
    \mathbf{f}(x)&\approx\int_{-\infty}^\infty\sum_{\alpha',j}c_j(\alpha')R_{j,\alpha'}(\lambda)\mathrm{d}\lambda=\sum_{\alpha',j}c_j(\alpha')\int_{-\infty}^\infty R_{j,\alpha'}(\lambda)\mathrm{d}\lambda\\
    &=\sum_{\alpha',j}c_j(\alpha')\cdot\begin{cases}
        0,\quad &\mathrm{sign}(j)=\mathrm{sign}(\alpha'),\\
        -2\pi|j|,\quad &\alpha'=0,\\
        -4\pi e^{-|\alpha'|}L_{|j|-1}^{(1)}(2|\alpha'|),\quad &\mathrm{otherwise}.
    \end{cases}
\end{align*}

\subsubsection{Accounting for Poles}\label{sec:PolesComputation}

We describe a method to construct the discrete scattering data. To find the zeros of $a(\lambda)$, denoted $z_{j,+}$, $j=1,\dots,n$, we first map the real line to the unit circle via
\begin{equation*}
    z=\frac{\lambda -i}{\lambda + i},\quad \lambda=-i\frac{z+1}{z-1},
\end{equation*}
and parameterize the unit circle by $z=e^{i\theta}$, $\theta\in[0,2\pi)$.
Sampling $a(\lambda(\theta))$ at equispaced values of $\theta$, we compute its Fourier coefficients using the \gls{fft}, yielding the approximation
\begin{equation*}
    a(\lambda(\theta))\approx\sum_{k=-N}^Nc_ke^{ik\theta}=\sum_{k=-N}^Nc_kz^k.
\end{equation*}
From these coefficients, we extract a polynomial approximation in $z$, whose coefficients define a companion matrix. In practice, we retain only the coefficients corresponding to nonnegative Fourier modes to construct the polynomial. The eigenvalues of this matrix give the roots in the mapped $z$-plane, and applying the inverse transformation yields the zeros, $z_{j,+}$, of $a(\lambda)$. 
An analogous procedure applies to $A(\lambda)$ to find $z_{j,-}=\overline{z_{j,+}}$, $j=1,\dots,n$. 

To compute derivatives $a'(z_{j,+})$ and $A'(z_{j,-})$, we differentiate the truncated expansions of $a(\lambda)$ and $A(\lambda)$ in the oscillatory rational basis. Using the recurrence relation for the derivatives of the basis functions (see Appendix~\ref{sec:Appendix_DiffMult}), the derivative of a basis function $R_{j,\alpha}(\lambda)$ is expressed as a linear combination of $R_{j+1}(\lambda)$, $R_j(\lambda)$, and $R_{j-1}(\lambda)$. This structure allows for efficient evaluation of $a'(\lambda)$ and $A'(\lambda)$ at the discrete eigenvalues using Horner's method.

In exact arithmetic, $b_{j,\pm}$ are determined from the proportionality relations
\begin{equation*}
    \mathbf{m}_{1,-}(0;z_{j,+})=b_{j,+}\mathbf{m}_{2,+}(0;z_{j,+})\quad\text{and}\quad \mathbf{m}_{1,+}(0;z_{j,-})=b_{j,-}\mathbf{m}_{2,-}(0;z_{j,-}),
\end{equation*}
so that $b_{j,\pm}$ could in principle be computed by dividing one component of the vectors by the corresponding component of the other. However, due to numerical error, these vectors are not exactly proportional in finite precision arithmetic. We therefore compute $b_{j,\pm}$ as the least-squares solution to
\[
\min_{b \in \mathbb{C}} \| \mathbf{m}_{1,\mp}(0;z_{j,\pm}) - b\,\mathbf{m}_{2,\pm}(0;z_{j,\pm}) \|_2^2,
\]
which yields
\begin{equation*}
    b_{j,+}=\frac{\mathbf{m}_{1,-}(0;z_{j,+})\cdot\mathbf{m}_{2,+}(0;z_{j,+})}{\mathbf{m}_{2,+}(0;z_{j,+})\cdot\mathbf{m}_{2,+}(0;z_{j,+})},\quad
    b_{j,-}=\frac{\mathbf{m}_{1,+}(0;z_{j,-})\cdot\mathbf{m}_{2,-}(0;z_{j,-})}{\mathbf{m}_{2,-}(0;z_{j,-})\cdot\mathbf{m}_{2,-}(0;z_{j,-})},
\end{equation*}
where $\cdot$ denotes the Euclidean inner product.
We evaluate $\mathbf{m}_{\ell,\pm}(x;\lambda)$, $\ell\in\{1,2\}$, at $x=0$ because $b_{j,\pm}$ are constant in $x$. The norming constants $c_{j,\pm}$ follow from~\eqref{eq:normingConstants}. 

To illustrate pole contributions, we consider RHP~\ref{rhp:5}, following the formulation in~\cite{trogdon_scattering_2021}, see also \cite{ablowitz_solitons_1991}.

\begin{rhp}\label{rhp:5}
    Find $\mathbf{Y}(x;\diamond):\mathbb{R}\rightarrow\mathbb{C}^{2\times 2}$ and $\mathbf{Y}_{j,\pm}(x)\in\mathbb{C}^{2\times 2}$, $j=1,2,...,n$, such that
    \begin{align*}
        \mathbf{T}(x;\lambda)=\mathbb{I}+&\frac{1}{2\pi i}\int_{-\infty}^\infty\frac{\mathbf{Y}(x;\lambda')}{\lambda'-\lambda}\mathrm{d}\lambda' + \sum_{j=1}^{n}\frac{\mathbf{Y}_{j,+}(x)}{\lambda-z_{j,+}} + \sum_{j=1}^{n}\frac{\mathbf{Y}_{j,-}(x)}{\lambda-z_{j,-}},\\
        \mathbf{T}^+(x;\lambda)&=\mathbf{T}^-(x;\lambda)\begin{bmatrix}
            1-\rho_1(\lambda)\rho_2(\lambda) & -\rho_2(\lambda)e^{-2i\lambda x}\\
            \rho_1(\lambda)e^{2i\lambda x} & 1
        \end{bmatrix},
    \end{align*}
    and
    \begin{align*}
        \mathrm{Res}_{\lambda=z_{j,+}}\mathbf{T}(x;\lambda)=\lim_{\lambda\rightarrow z_{j,+}}\mathbf{T}(x;\lambda)\begin{bmatrix}
            0 & 0\\
            c_{j,+}e^{2iz_{j,+}x} & 0
        \end{bmatrix},\\
        \mathrm{Res}_{\lambda=z_{j,-}}\mathbf{T}(x;\lambda)=\lim_{\lambda\rightarrow z_{j,-}}\mathbf{T}(x;\lambda)\begin{bmatrix}
            0 & c_{j,-}e^{-2iz_{j,-}x}\\
            0 & 0
        \end{bmatrix}.
    \end{align*}
\end{rhp}

Now, with $\rho_1=\rho_2=0$, we have RHP~\ref{rhp:6}.

\begin{rhp}\label{rhp:6}
    Find $\mathbf{Y}_{j,\pm}(x)\in\mathbb{C}^{2\times 2}$, $j=1,2,...,n$ such that
    \begin{equation*}
        \mathbf{T}_d(x;\lambda)=\mathbb{I}+\sum_{j=1}^{n}\frac{\mathbf{Y}_{j,+}(x)}{\lambda-z_{j,+}}+\sum_{j=1}^{n}\frac{\mathbf{Y}_{j,-}(x)}{\lambda-z_{j,-}},
    \end{equation*}
    satisfies
    \begin{equation*}
        \mathrm{Res}_{\lambda=z_{j,+}}\mathbf{T}_d(x;\lambda)=\lim_{\lambda\rightarrow z_{j,+}}\mathbf{T}_d(x;\lambda)\begin{bmatrix}
            0 & 0\\
            c_{j,+}e^{2iz_{j,+}x} & 0
        \end{bmatrix},
    \end{equation*}
    and
    \begin{equation*}
        \mathrm{Res}_{\lambda=z_{j,-}}\mathbf{T}_d(x;\lambda)=\lim_{\lambda\rightarrow z_{j,-}}\mathbf{T}_d(x;\lambda)\begin{bmatrix}
            0 & c_{j,-}e^{-2iz_{j,-}x}\\
            0 & 0
        \end{bmatrix}.
    \end{equation*}
\end{rhp}
The matrices $\mathbf{Y}_{j,\pm}(x)\in\mathbb{C}^{2\times2}$ can be obtained row-by-row. The residue conditions imply that
\begin{equation*}
    \mathbf{Y}_{j,+}(x)=\begin{bmatrix}
        Y_{j,+}^{(11)}(x) & 0\\
        Y_{j,+}^{(21)}(x) & 0
    \end{bmatrix},\quad \mathbf{Y}_{j,-}(x)=\begin{bmatrix}
        0 & Y_{j,-}^{(12)}(x)\\
        0 & Y_{j,-}^{(22)}(x)
    \end{bmatrix}.
\end{equation*}
Define $\mathbf{Z}$ entrywise as
\begin{equation*}
    Z_{jk} = \frac{1}{z_{j,+}-z_{k,-}}.
\end{equation*}
Construct diagonal matrices
\begin{align*}
    &\quad\mathbf{C}_+=\text{diag}(\mathbf{c}_+),\quad\mathbf{C}_-=\text{diag}(\mathbf{c}_-),\\
    \mathbf{c}_+=&\begin{bmatrix}
        c_{1,+}e^{2iz_{1,+}x}\\
        \vdots\\
        c_{n,+}e^{2iz_{n,+}x}
    \end{bmatrix},\quad\mathbf{c}_-=\begin{bmatrix}
        c_{1,-}e^{-2iz_{1,-}x}\\
        \vdots\\
        c_{n,-}e^{-2iz_{n,-}x}
    \end{bmatrix}.
\end{align*}
The residue conditions give the block linear system
\begin{equation*}
    \left[
    \begin{array}{c|c}
    \mathbb{I} & -\mathbf{C}_+\mathbf{Z} \\
    \hline
    \mathbf{C}_-\mathbf{Z}^T & \mathbb{I}
    \end{array}
    \right]\left[
    \begin{array}{c|c}
    \begin{array}{c}
    Y_{1,+}^{(11)}(x) \\
    \vdots\\
    Y_{n,+}^{(11)}(x)
    \end{array} &
    \begin{array}{c}
    Y_{1,+}^{(21)}(x) \\
    \vdots\\
    Y_{n,+}^{(21)}(x)
    \end{array} \\
    \hline
    \begin{array}{c}
    Y_{1,-}^{(12)}(x) \\
    \vdots\\
    Y_{n,-}^{(12)}(x)
    \end{array} &
    \begin{array}{c}
    Y_{1,-}^{(22)}(x) \\
    \vdots\\
    Y_{n,-}^{(22)}(x)
    \end{array}
    \end{array}
    \right]=\left[
    \begin{array}{c|c}
    \mathbf{0} & \mathbf{c}_+ \\
    \hline
    \mathbf{c}_- & \mathbf{0}
    \end{array}
    \right].
\end{equation*}
This reduces solving for $\mathbf{T}_d(x;\lambda)$ to pure linear algebra. 

Define $\mathbf{T}_0(x;\lambda)=\mathbf{T}(x;\lambda)\mathbf{T}_d(x;\lambda)^{-1}$. Then $\mathbf{T}_0(x;\lambda)$ satisfies an \gls{rhp} with no residue conditions, which we state in a different form.

\begin{rhp}\label{rhp:7}
    Find $\mathbf{Y}_0(x;\diamond):\mathbb{R}\rightarrow\mathbb{C}^{2\times 2}$, such that
    \begin{align*}
        \mathbf{T}_0(x;\lambda)&=\mathbb{I}+\frac{1}{2\pi i}\int_{-\infty}^\infty\frac{\mathbf{Y}_0(x;\lambda')}{\lambda'-\lambda}\mathrm{d}\lambda',\\
        \mathbf{T}_0^+(x;\lambda)&=\mathbf{T}_0^-(x;\lambda)\mathbf{T}_d(x;\lambda)\begin{bmatrix}
            1-\rho_1(\lambda)\rho_2(\lambda) & -\rho_2(\lambda)e^{-2i\lambda x}\\
            \rho_1(\lambda)e^{2i\lambda x} & 1
        \end{bmatrix}\mathbf{T}_d^{-1}(x;\lambda).
    \end{align*}
\end{rhp}
Factor as
\begin{equation*}
    \mathbf{T}_d(x;\lambda)\begin{bmatrix}
        1-\rho_1(\lambda)\rho_2(\lambda) & -\rho_2(\lambda)e^{-2i\lambda x}\\
        \rho_1(\lambda)e^{2i\lambda x} & 1
    \end{bmatrix}\mathbf{T}_d(x;\lambda)^{-1}=\mathbf{U}_d(x;\lambda)\mathbf{L}_d(x;\lambda)^{-1},
\end{equation*}
where
\begin{align*}
    \mathbf{U}_d(x;\lambda)&=\mathbf{T}_d(x;\lambda)\begin{bmatrix}
        1 & -\rho_2(\lambda)e^{-2i\lambda x}\\
        0 & 1
    \end{bmatrix}\mathbf{T}_d(x;\lambda)^{-1},\\
    \mathbf{L}_d(x;\lambda)&=\mathbf{T}_d(x;\lambda)\begin{bmatrix}
        1 & 0\\
        -\rho_1(\lambda)e^{2i\lambda x} & 1
    \end{bmatrix}\mathbf{T}_d(x;\lambda)^{-1}.
\end{align*}
The singular integral equation for $\mathbf{Y}_0(x;\lambda)$ takes the form
\begin{equation*}
    \mathcal{C}^+_\mathbb{R}\left[\mathbf{Y}_0(x;\diamond)\right](\lambda)\mathbf{L}_d(x;\lambda)-\mathcal{C}^-_{\mathbb{R}}\left[\mathbf{Y}_0(x;\diamond)\right](\lambda)\mathbf{U}_d(x;\lambda)=\mathbf{U}_d(x;\lambda)-\mathbf{L}_d(x;\lambda).
\end{equation*}

Each row of $\mathbf{Y}_0$ can be solved for independently using infinite-dimensional \gls{gmres} as described in Section~\ref{sec:DiracComputation}. We can then compute
\begin{equation*}
    \mathbf{T}_0^\pm(x;\lambda) = \mathcal{C}^\pm_{\mathbb{R}} \mathbf{Y}_0(x;\lambda) + \mathbb{I},\qquad
    \mathbf{T}(x;\lambda) = \mathbf{T}_0(x;\lambda) \mathbf{T}_d(x;\lambda).
\end{equation*}

\subsubsection{Modifications for Left Scattering}\label{sec:LeftScattering}

The method in Section~\ref{sec:DiracComputation} is efficient for right scattering ($x\geq0$). The modifications introduced here for left scattering extend this efficiency to $x<0$, as reflected in the \gls{gmres} iteration counts: for symmetric potentials, the iterations are symmetric in $x$ and decrease as $|x|$ increases (see Section~\ref{sec:Results}). We develop an analogous approach for left scattering ($x<0$), using a hat to represent the modified quantities. The left scattering formulation differs from the right scattering case in that the placement of the Jost columns in the complex $\lambda$-plane is reversed. We exploit this symmetry by conjugating with $\sigma_1$ and renormalizing by the transmission coefficients. This transformation restores the same determinant normalization and half-plane analyticity structure as in the right scattering case, allowing the computational framework of Section~\ref{sec:DiracComputation} to be applied.

We define $\hat{\mathbf{L}}(x;\lambda)$ for $x<0$ as
\begin{align*}
    \hat{\mathbf{L}}(x;\lambda) &= \begin{cases}
        \mathbf{T}^+(x;\lambda)\sigma_1\begin{bmatrix}
            \frac{1}{a(\lambda)} & 0\\
            0 & a(\lambda)
        \end{bmatrix},\quad\lambda\in\mathbb{C}^+,\\[1.25em]
    \mathbf{T}^-(x;\lambda)\sigma_1\begin{bmatrix}
        A(\lambda) & 0\\
        0 & \frac{1}{A(\lambda)}
    \end{bmatrix},\quad\lambda\in\mathbb{C}^-,
    \end{cases}\\
    &=\begin{cases}
        \begin{bmatrix}
            \frac{\mathbf{m}_{2,+}(x;\lambda)}{a(\lambda)} & \mathbf{m}_{1,-}(x;\lambda)
        \end{bmatrix},\quad\lambda\in\mathbb{C}^+,\\[0.5em]
        \begin{bmatrix}
            \mathbf{m}_{1,+}(x;\lambda) & \frac{\mathbf{m}_{2,-}(x;\lambda)}{A(\lambda)}
        \end{bmatrix},\quad\lambda\in\mathbb{C}^-.
    \end{cases}
\end{align*}
The jump relation for $\hat{\mathbf{L}}(x;\lambda)$ is
\begin{equation*}
    \hat{\mathbf{L}}^+(x;\lambda)=\hat{\mathbf{L}}^-(x;\lambda)\begin{bmatrix}
        \frac{1}{a(\lambda)A(\lambda)} & \frac{b(\lambda)}{A(\lambda)}\\
        -\frac{B(\lambda)}{a(\lambda)} & 1
    \end{bmatrix},\quad\hat{\mathbf{L}}(x;\lambda)=\left(\mathbb{I}+o(1)\right)\begin{bmatrix}
        0 & e^{-i\lambda x}\\
        e^{i\lambda x} & 0
    \end{bmatrix}\quad\text{as}\quad |\lambda|\rightarrow\infty.
\end{equation*}
The asymptotic condition follows from the large-$|\lambda|$ behavior of the Jost solutions. Arguing as above, 
\[
\mathbf m_{\ell,-}(x;\lambda)=e^{-i\lambda x}(\mathbf e_1+o(1)),\quad
\mathbf m_{\ell,+}(x;\lambda)=e^{i\lambda x}(\mathbf e_2+o(1)),\quad\ell\in\{1,2\},
\]
uniformly for $x \in \mathbb R$ and $a(\lambda),A(\lambda)= 1+\mathcal{O}(1/|\lambda|)$ as $|\lambda|\to\infty$. Thus, from the definition of $\hat{\mathbf L}$, the stated asymptotics follow.
Define $\acute{\mathbf{L}}(x;\lambda)=\sigma_1\hat{\mathbf{L}}(x;\lambda)$. The jump matrix has determinant 1. To match the asymptotic condition in RHP~\ref{rhp:3}, set
\begin{equation*}
    \hat{\mathbf{T}}(x;\lambda) = \acute{\mathbf{L}}(x;\lambda)\begin{bmatrix}
        e^{-i\lambda x} & 0\\
        0 & e^{i\lambda x}
    \end{bmatrix}.
\end{equation*}
Define $\hat{\rho}_1(\lambda)=-B(\lambda)/a(\lambda)$ and $\hat{\rho}_2(\lambda)=-b(\lambda)/A(\lambda)$. Then
\begin{align*}
    \hat{\mathbf{T}}^+(x;\lambda) &= \hat{\mathbf{T}}^-(x;\lambda)\begin{bmatrix}
        1-\hat{\rho}_1(\lambda)\hat{\rho}_2(\lambda) & -\hat{\rho}_2(\lambda)e^{2i\lambda x}\\
        \hat{\rho}_1(\lambda)e^{-2i\lambda x} & 1
    \end{bmatrix}=\hat{\mathbf{T}}(x;\lambda)\hat{\mathbf{G}}(x;\lambda),\\
    \hat{\mathbf{T}}(x;\lambda)&=\mathbb{I}+o(1)\quad\text{as}\quad|\lambda|\rightarrow\infty.
\end{align*}
From here we can repeat the procedure above from Step 2 onwards in Section~\ref{sec:DiracComputation}, replacing $\mathbf{T}^\pm(x;\lambda)$ and $\mathbf{G}(x;\lambda)$ with $\hat{\mathbf{T}}^\pm(x;\lambda)$ and $\hat{\mathbf{G}}(x;\lambda)$, respectively.

\subsubsection{Modifications for Left Scattering with Poles}\label{sec:LeftScatteringPoles}

We find that as $x\rightarrow-\infty$, for $\lambda\in\mathbb{C}^+$,
\begin{equation*}
    \text{Res}_{\lambda=z_{j,+}}\mathbf{v}(x;\lambda)=\hat{\mathbf{w}}_{j,+}(x;z_{j,+})=-\frac{1}{a'(z_{j,+})}\mathbf{m}_{1,-}(x;z_{j,+})\int_{-\infty}^\infty\begin{bmatrix}
        m_{2,+}^{(2)}(s;z_{j,+}) & -m_{2,+}^{(1)}(s;z_{j,+})
    \end{bmatrix}\mathbf{f}(s)\mathrm{d}s,
\end{equation*}
and for $\lambda\in\mathbb{C}^-$,
\begin{equation*}
    \text{Res}_{\lambda=z_{j,-}}\mathbf{v}(x;\lambda)=\hat{\mathbf{w}}_{j,-}(x;z_{j,-})=-\frac{1}{A'(z_{j,-})}\mathbf{m}_{1,+}(x;z_{j,-})\int_{-\infty}^\infty\begin{bmatrix}
        -m_{2,-}^{(2)}(s;z_{j,-}) & m_{2,-}^{(1)}(s;z_{j,-})
    \end{bmatrix}\mathbf{f}(s)\mathrm{d}s.
\end{equation*}
We modify the recovery formula in~\eqref{eq:RecoveryPoles} by replacing $\mathbf{w}_{j,\pm}$ with $\hat{\mathbf{w}}_{j,\pm}$. 

A zero of $a(\lambda)$ at $\lambda=z_{j,+}$ implies that $\mathbf{m}_{1,-}(x;z_{j,+})=b_{j,+}\mathbf{m}_{2,+}(x;z_{j,+})$ for some $b_{j,+}.$ Therefore, 
\begin{equation*}
    \mathrm{Res}_{\lambda=z_{j,+}}\frac{\mathbf{m}_{2,+}(x;\lambda)}{a(\lambda)}=\frac{\mathbf{m}_{2,+}(x;z_{j,+})}{a'(z_{j,+})}=\frac{1}{b_{j,+}a'(z_{j,+})}\mathbf{m}_{1,-}(x;z_{j,+}).
\end{equation*}
Similarly, if $\mathbf{m}_{1,+}(x; z_{j,-}) = b_{j,-} \mathbf{m}_{2,-}(x; z_{j,-})$, then
\begin{equation*}
    \mathrm{Res}_{\lambda=z_{j,-}}\frac{\mathbf{m}_{2,-}(x;\lambda)}{A(\lambda)}=\frac{\mathbf{m}_{2,-}(x;z_{j,-})}{A'(z_{j,-})}=\frac{1}{b_{j,-}A'(z_{j,-})}\mathbf{m}_{1,+}(x;z_{j,-}).
\end{equation*}
Define
\begin{equation*}
    \hat{c}_{j,+}=\frac{1}{b_{j,+}a'(z_{j,+})},\quad \hat{c}_{j,-}=\frac{1}{b_{j,-}a'(z_{j,-})}.
\end{equation*}
We then repeat the workflow presented in Section~\ref{sec:DiracPoles}, 
replacing any quantities with their corresponding hat quantities.

\subsection{Results}\label{sec:Results}

We now demonstrate the computation of the Dirac transform pair. For each example, we compute the forward transform, reconstruct the solution via the inverse transform, and report the associated error and \gls{gmres} iteration counts. Unless otherwise noted, all computations achieve near machine precision. Code used to produce all figures in this paper can be found at~\cite{Code}.

\subsubsection{Example 1: Gaussian potential, Gaussian data, and \texorpdfstring{$\tau=1$}{tau=1}}\label{sec:Ex1}

Let
\begin{equation}\label{eq:Ex1}
    \mathbf{f}(x)=\begin{bmatrix} e^{-x^2} \\ e^{-x^2} \end{bmatrix},\quad q(x)=e^{-x^2},\quad \tau=1.
\end{equation}
Observe that the forward transforms, $c_+(\lambda)$ and $c_-(\lambda)$, and the max absolute error of $c_+(\lambda)$ with this choice of $\mathbf{f}$ and $q$ have already been plotted in Figures~\ref{fig:DiracForward} and~\ref{fig:cp_err}. This choice of $\tau$ does not produce a discrete spectrum.  
Figure~\ref{fig:DiracInverse} shows the recovered function and the absolute difference from the original.  
Since both components of $\mathbf{f}(x)$ are identical, we plot only one component.
\begin{figure}[htbp]
    \centering
    \begin{subfigure}[b]{0.49\textwidth}
        \centering
        \includegraphics[width=\textwidth]{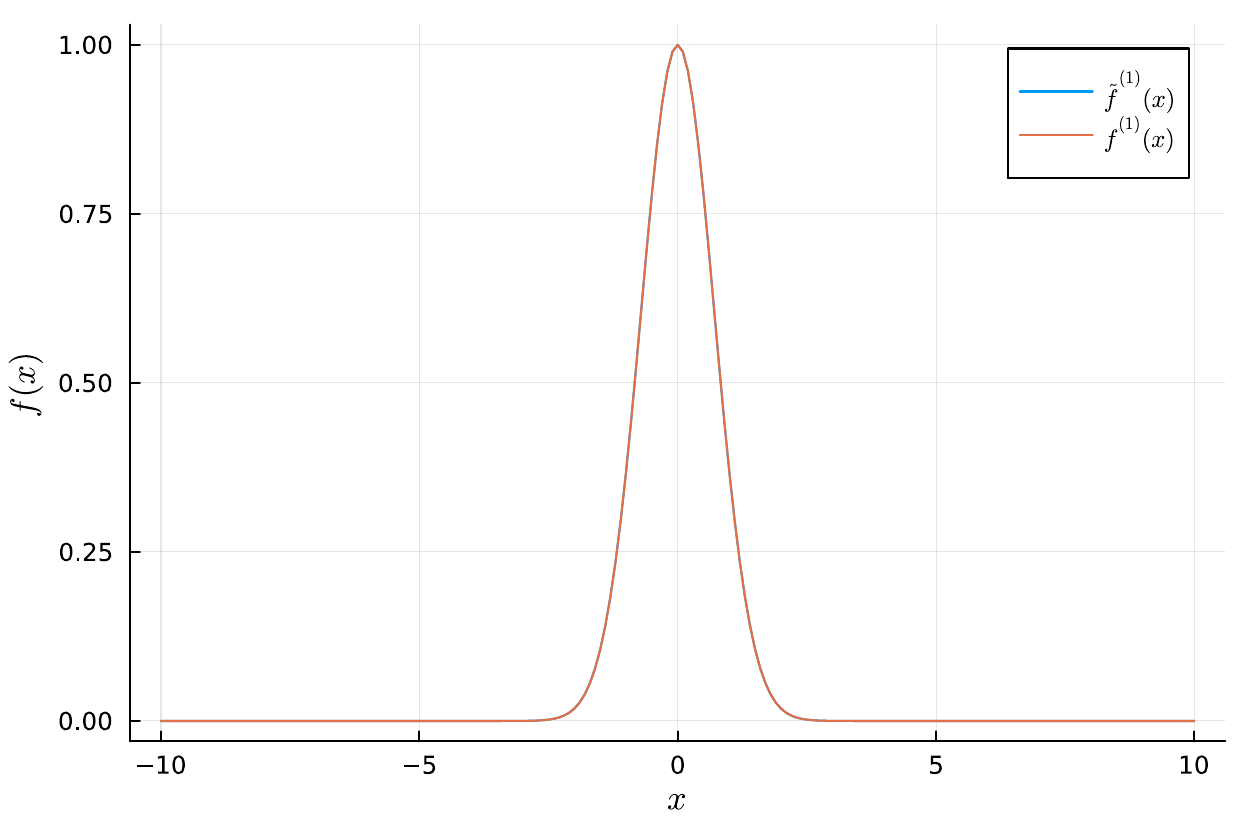}
        \caption{Inverse transform}
        \label{fig:DiracInversePlot1}
    \end{subfigure}
    \hfill
    \begin{subfigure}[b]{0.49\textwidth}
        \centering
        \includegraphics[width=\textwidth]{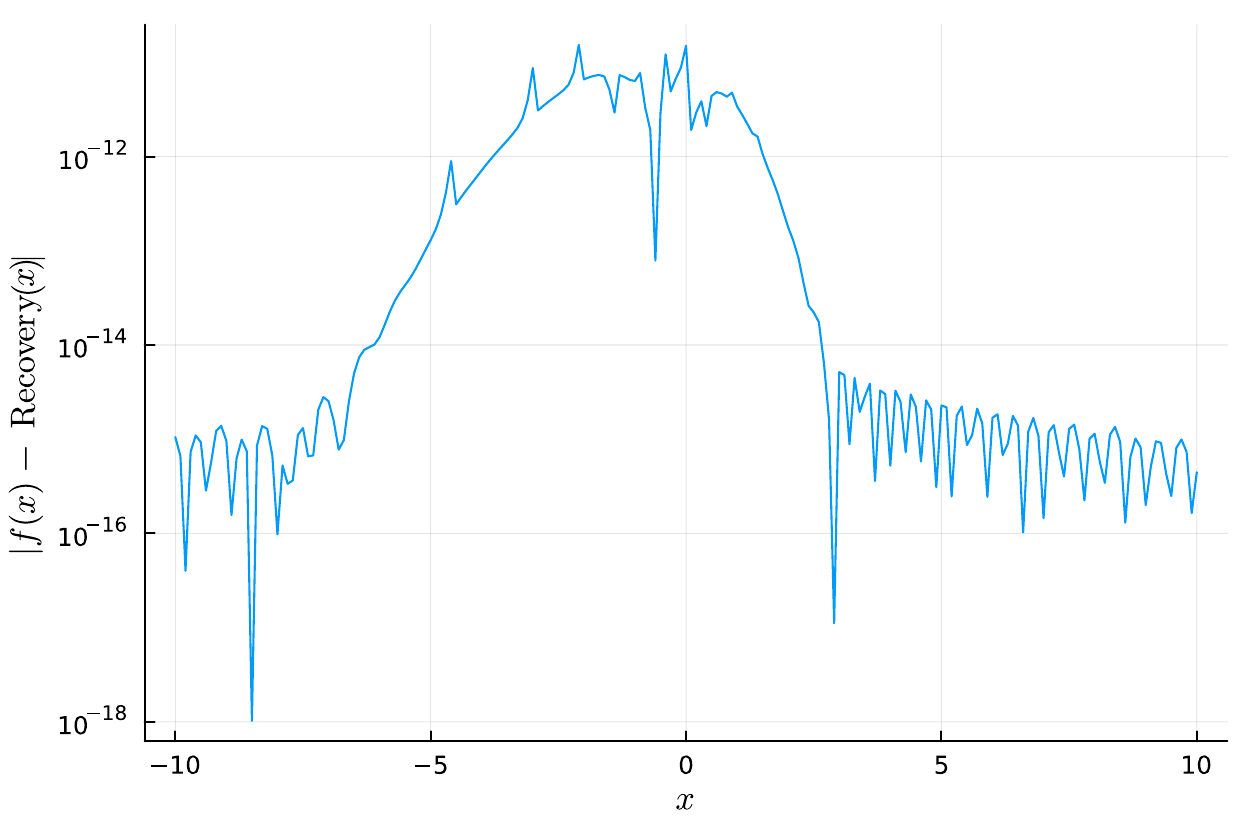}
        \caption{Error}
        \label{fig:DiracError}
    \end{subfigure}
    \caption{(a) Recovered first component of $\mathbf{f}(x)$, $\tilde{f}^{(1)}(x)$, and the original $f^{(1)}(x)$ with data and potential given by~\eqref{eq:Ex1}. (b) Absolute difference between the recovery and the original. The \glspl{ode} for $\mathbf{m}_{\ell,\pm}(x;\lambda)$, $\ell\in\{1,2\}$, were solved using 300 collocation nodes in $x$, while those for $\mathbf{p}_\pm(x;\lambda)$ used 500 nodes. The reflection coefficients were represented using 270 coefficients, $c_\pm(\lambda)$ were expanded with 498 coefficients, and $\mathbf{m}_{\ell,\pm}(x;\lambda)$ were expanded using 232 coefficients.}
    \label{fig:DiracInverse}
\end{figure}
The reconstruction (Figure~\ref{fig:DiracInverse}) achieves a worst-case error of approximately $10^{-11}$, improving as $|x|$ increases due to the $e^{-\alpha}$ factor in our Cauchy integral (Appendix~\ref{sec:RationalBasis}). Figure~\ref{fig:DiracGMRES} shows the number of \gls{gmres} iterations needed to reach a residual below $10^{-10}$.
\begin{figure}
    \centering
    \includegraphics[width=0.5\linewidth]{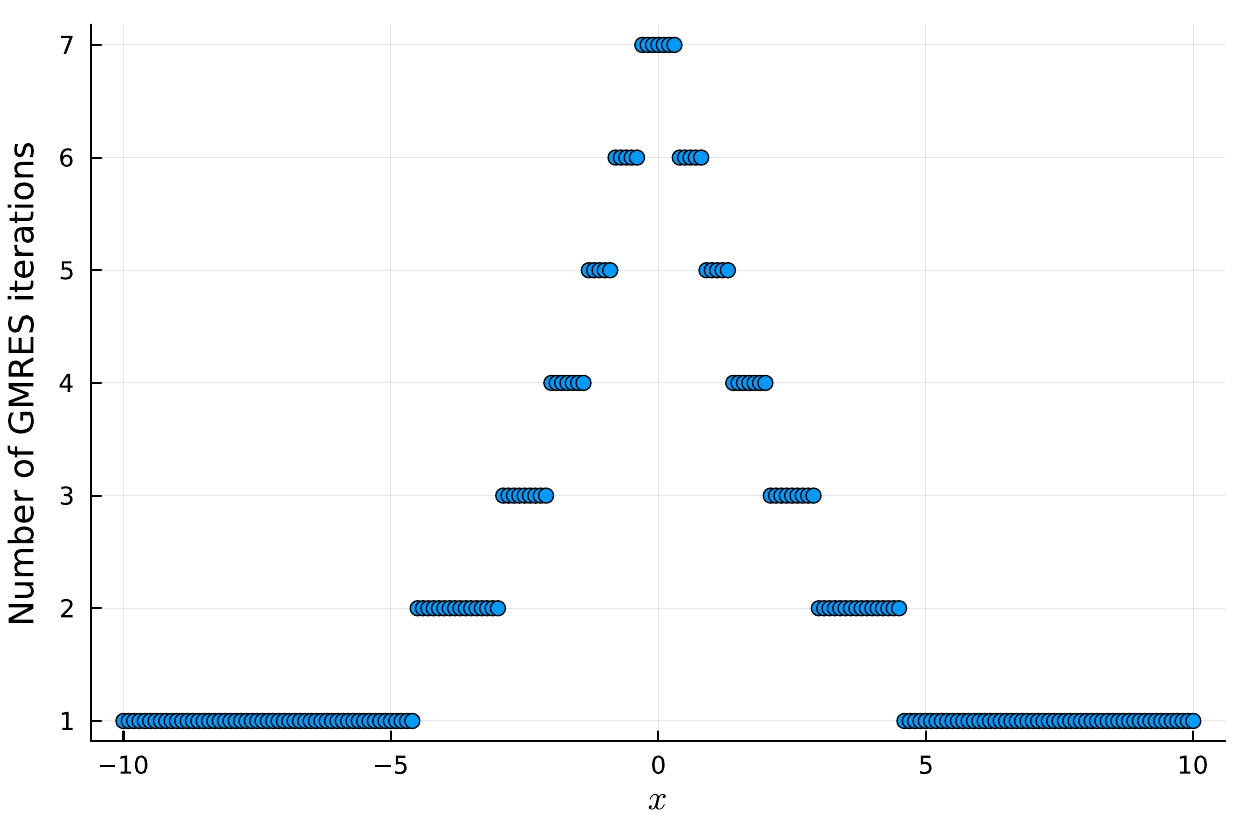}
    \caption{Number of \gls{gmres} iterations required to achieve a residual less than $10^{-10}$ for data and potential given by~\eqref{eq:Ex1}.}
    \label{fig:DiracGMRES}
\end{figure}
The required \gls{gmres} iterations are small and decrease as $|x|$ increases, so both accuracy and speed improve with $|x|$. 

\subsubsection{Example 2: Gaussian potential, Gaussian data, and \texorpdfstring{$\tau=1$}{tau=-1}}

Let 
\begin{equation}\label{eq:Ex2}
    \mathbf{f}(x)=\begin{bmatrix} e^{-x^2} \\ e^{-x^2} \end{bmatrix},\quad q(x)=e^{-x^2},\quad \tau=-1.
\end{equation} 
This potential produces the eigenvalues
\begin{equation*}
    z_{1,\pm}\approx \pm 0.13331628147293i,
\end{equation*}
with norming constants
\begin{equation*}
    c_{1,\pm}=\hat{c}_{1,\pm}=-0.77741091603772i.
\end{equation*}

\begin{figure}[htbp]
    \centering
    \begin{subfigure}[b]{0.49\textwidth}
        \centering
        \includegraphics[width=\textwidth]{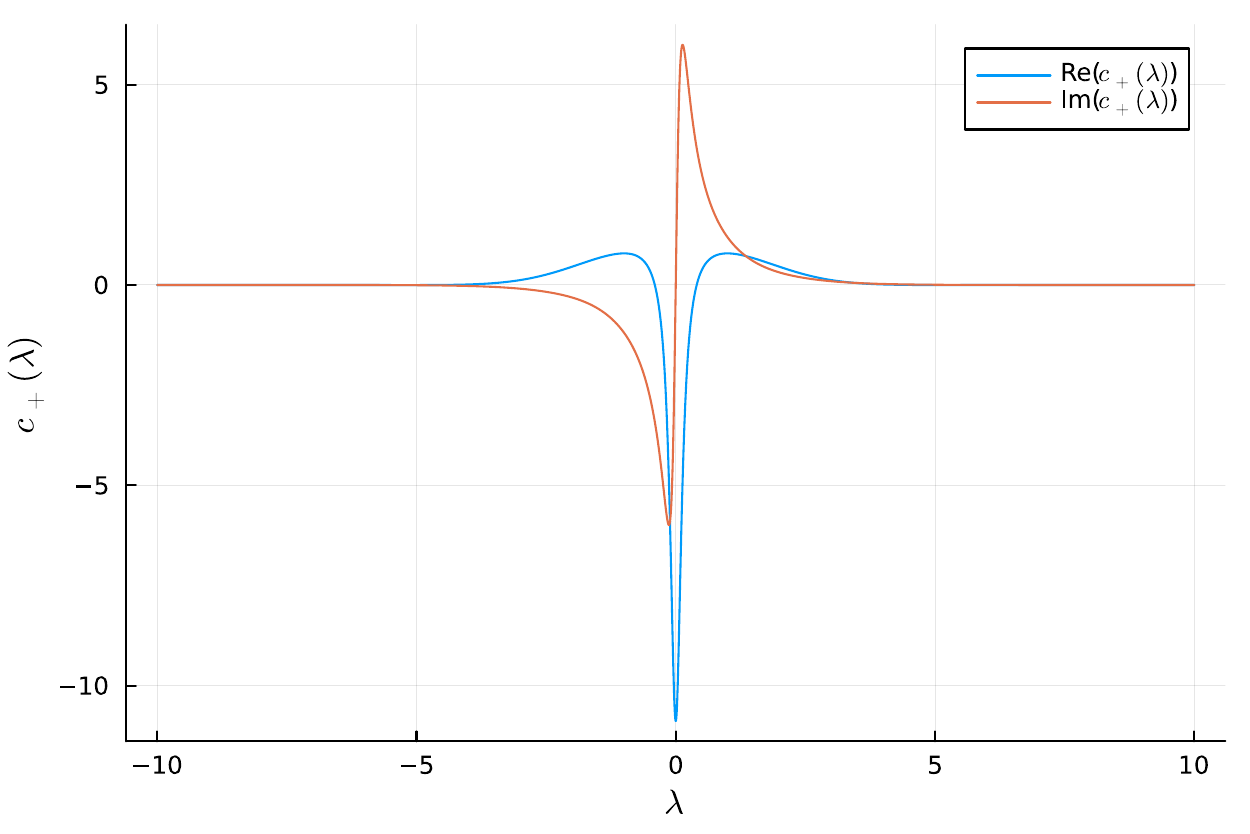}
        \caption{$c_+(\lambda)$}
    \end{subfigure}
    \hfill
    \begin{subfigure}[b]{0.49\textwidth}
        \centering
        \includegraphics[width=\textwidth]{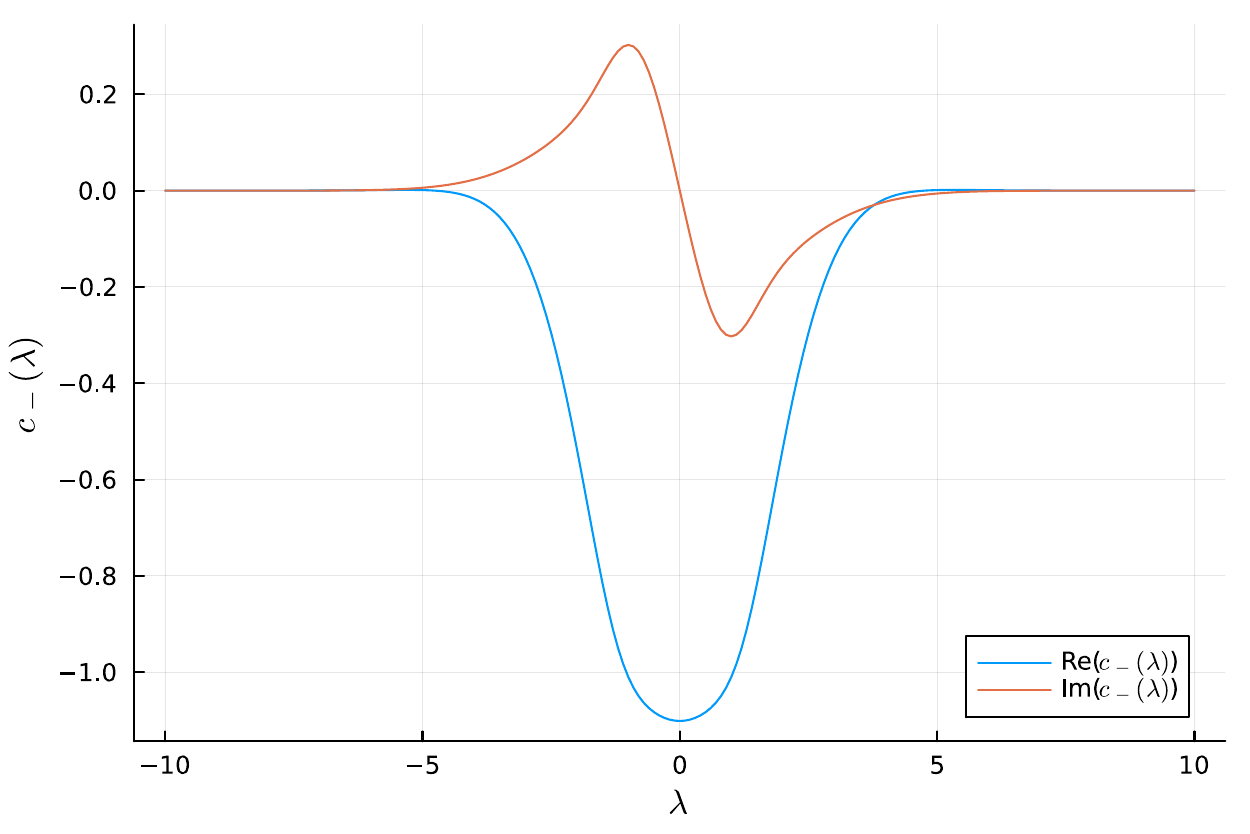}
        \caption{$c_-(\lambda)$}
    \end{subfigure}
    \caption{Forward transforms $c_+(\lambda)$ and $c_-(\lambda)$ with data and potential functions given by~\eqref{eq:Ex2} computed using the \gls{uclm}.}
    \label{fig:PolesForward}
\end{figure}
\begin{figure}[htbp]
    \centering
    \begin{subfigure}[b]{0.49\textwidth}
        \centering
        \includegraphics[width=\textwidth]{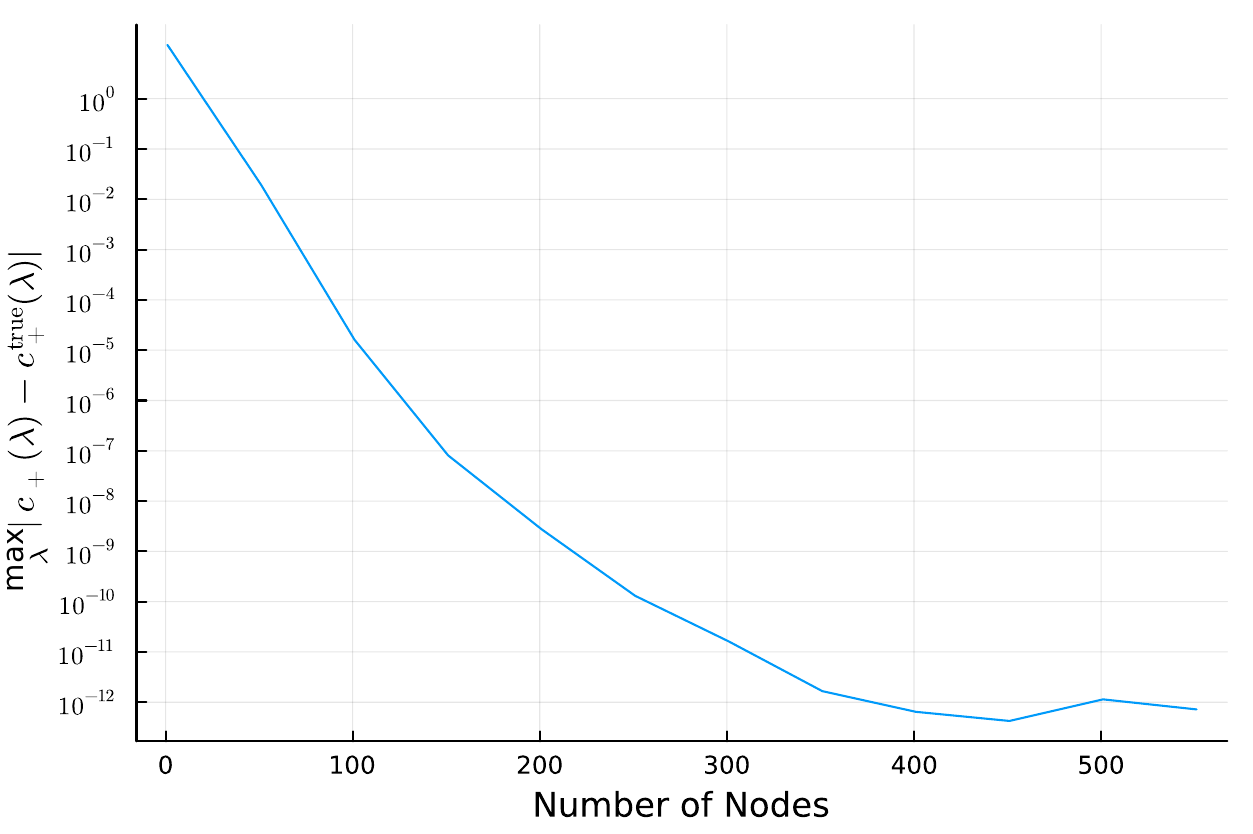}
        \caption{$c_+(\lambda)$}
    \end{subfigure}
    \hfill
    \begin{subfigure}[b]{0.49\textwidth}
        \centering
        \includegraphics[width=\textwidth]{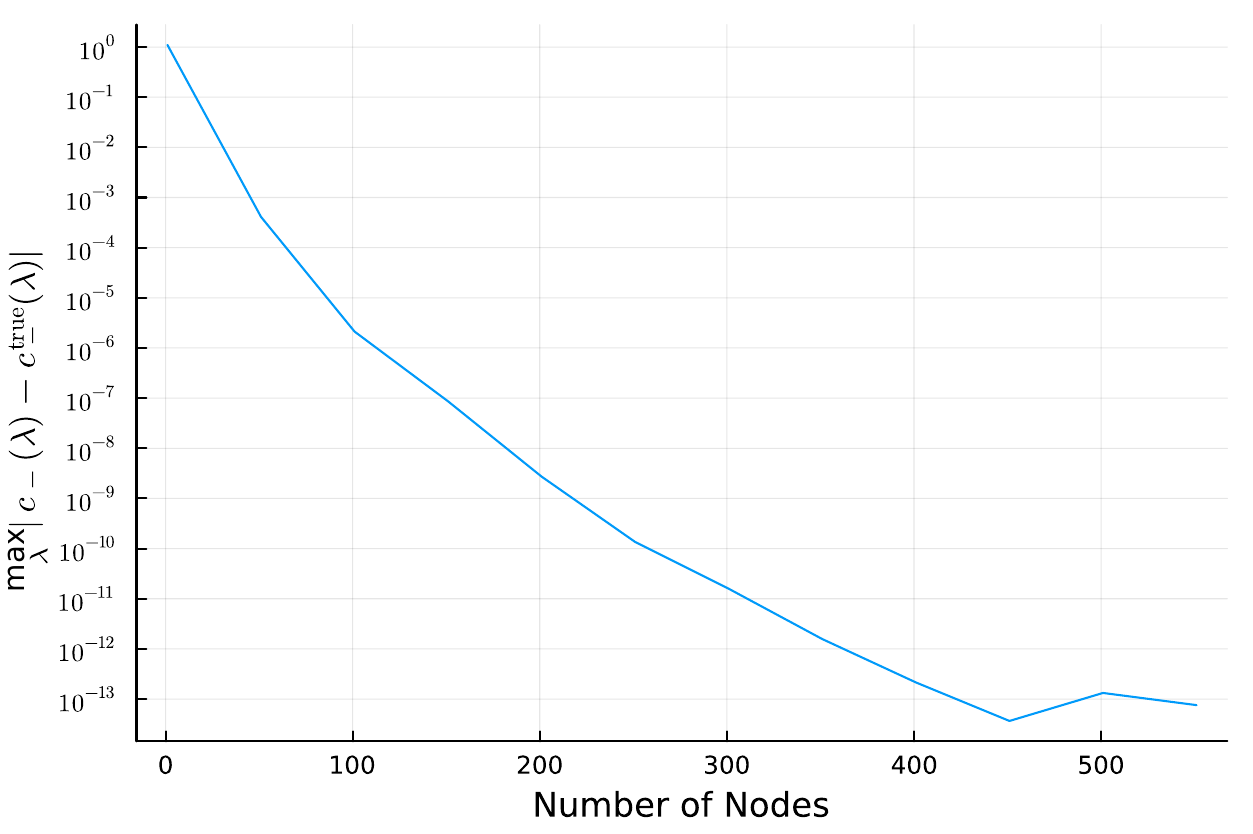}
        \caption{$c_-(\lambda)$}
    \end{subfigure}
    \caption{Max absolute error between $c_\pm(\lambda)$ for data and potential given by~\eqref{eq:Ex2} and highly resolved solution evaluated on a uniform grid in $\lambda \in [-10,10]$ as a function of the number of nodes used.}
    \label{fig:TBD}
\end{figure}

\begin{figure}[htbp]
    \centering
    \begin{subfigure}[b]{0.49\textwidth}
        \centering
        \includegraphics[width=\textwidth]{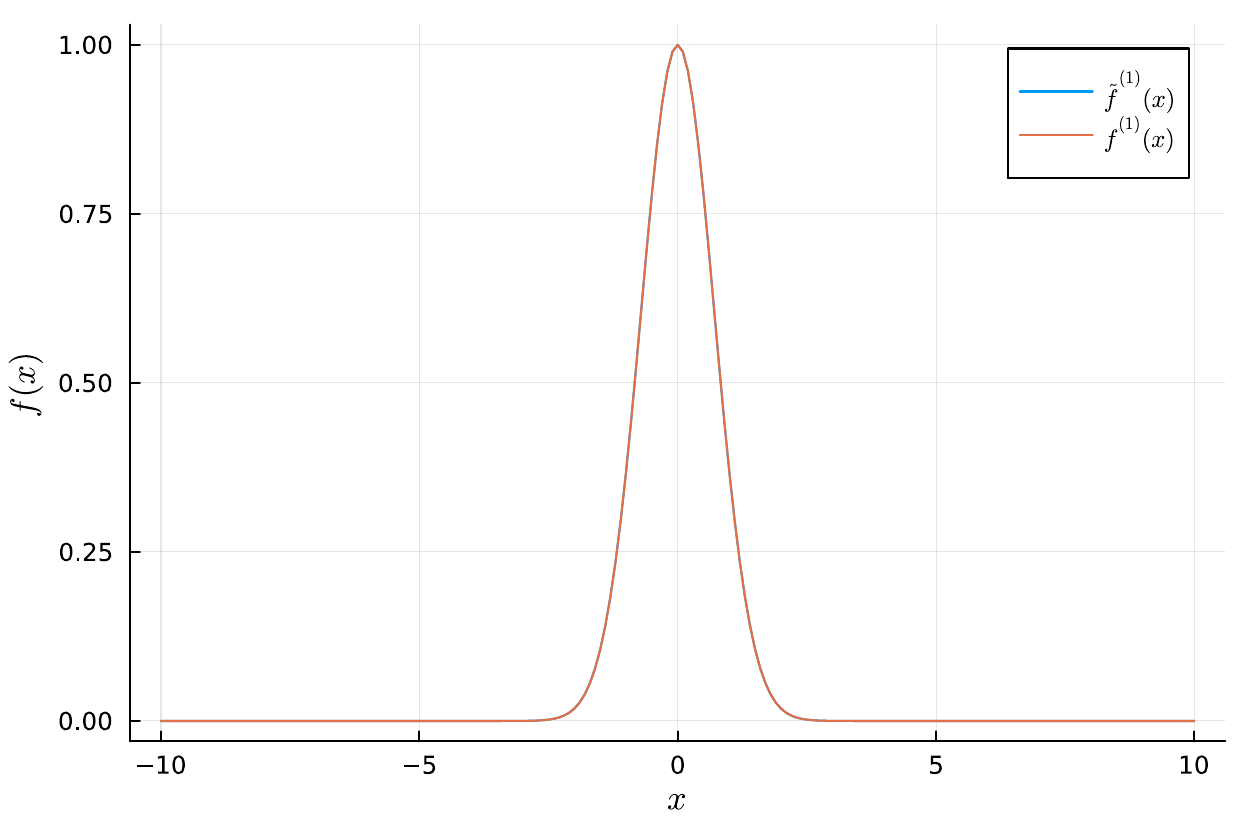}
        \caption{Inverse transform}
        \label{fig:DiracInversePlot}
    \end{subfigure}
    \hfill
    \begin{subfigure}[b]{0.49\textwidth}
        \centering
        \includegraphics[width=\textwidth]{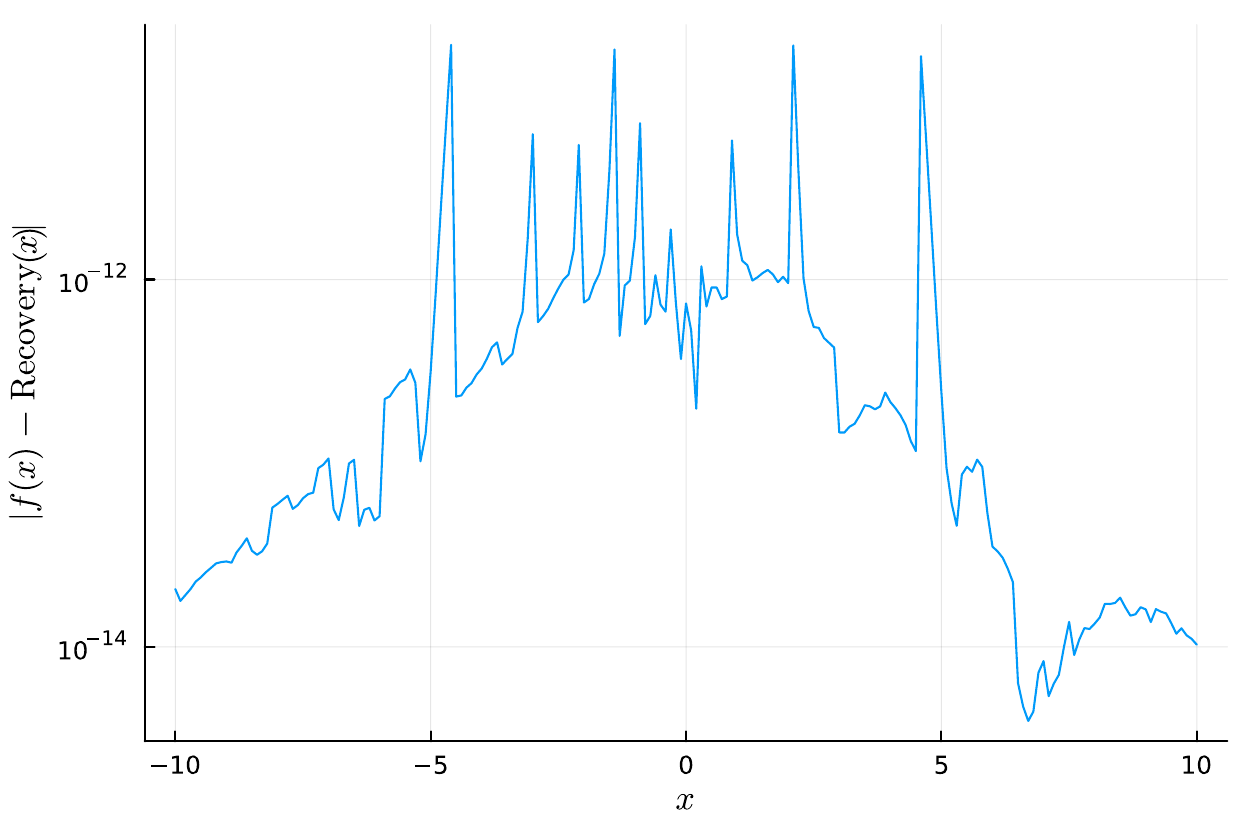}
        \caption{Error}
        \label{fig:DiracError1}
    \end{subfigure}
    \caption{(a) Recovered first component of $\mathbf{f}(x)$, $\tilde{f}^{(1)}(x)$, and the original $f^{(1)}(x)$ with data and potential given by~\eqref{eq:Ex2}. (b) Absolute difference between the recovery and the original. The \glspl{ode} for $\mathbf{m}_{\ell,\pm}(x;\lambda)$, $\ell\in\{1,2\}$, were solved using 300 collocation nodes in $x$, while those for $\mathbf{p}_\pm(x;\lambda)$ used 500 nodes. The reflection coefficients were represented using 292 coefficients, $c_\pm(\lambda)$ were expanded with 546 coefficients, and $\mathbf{m}_{\ell,\pm}(x;\lambda)$ were expanded using 330 coefficients.}
    \label{fig:DiracInversePoles}
\end{figure}

Despite the presence of discrete spectrum, the reconstruction maintains the same level of accuracy as in Example~\ref{sec:Ex1}. The \gls{gmres} iteration counts exhibit similar behavior, indicating that the inclusion of poles does not significantly affect convergence. However, in practice, each iteration becomes substantially more expensive when poles are included, leading to a noticeable slowdown in overall solver performance. Identifying the source of this increased cost and improving the efficiency of transform pair computations in this setting remains an area for future work.
\begin{figure}
    \centering
    \includegraphics[width=0.5\linewidth]{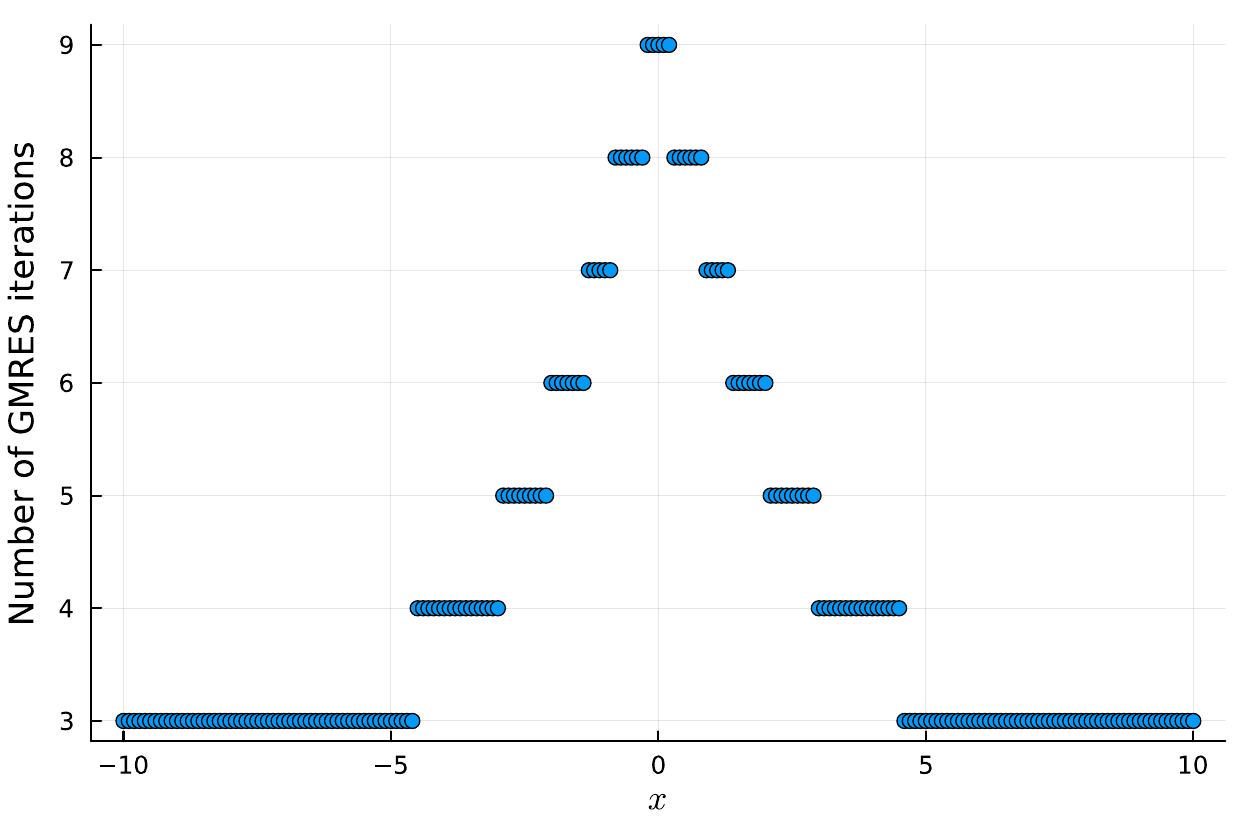}
    \caption{Number of \gls{gmres} iterations required to achieve a residual less than $10^{-10}$ for data and potential given by~\eqref{eq:Ex2}.}
    \label{fig:DiracGMRESPoles}
\end{figure}

\subsubsection{Example 3: Discontinuous potential, Gaussian data, and \texorpdfstring{$\tau=1$}{tau=1}}

Let 
\begin{equation}\label{eq:Ex3}
    \mathbf{f}(x)=\begin{bmatrix} e^{-x^2} \\ e^{-x^2} \end{bmatrix},\quad q(x)=\begin{cases}
        0,\quad &x\leq0,\\
        e^{-x},\quad &\text{otherwise}, 
    \end{cases}\quad \tau=1.
\end{equation}
This choice of $\tau$ produces no discrete spectrum. In contrast to the previous smooth examples, the discontinuity in $q(x)$ leads to two notable changes in behavior. First, the forward transform exhibits algebraic decay (Figure~\ref{fig:DiscontinuousPotentialDecay}), rather than the exponential decay obsered in Example~\ref{sec:Ex1}. Despite this slower decay, the inverse transform remains accurate to near machine precision (Figure~\ref{fig:DiscontinuousPotentialInverse}), demonstrating the robustness of the method. Second, the \gls{gmres} iteration counts are no longer symmetric in $x$ (Figure~\ref{fig:DiscontinuousPotentialGMRES}). In particular, \gls{gmres} converges in a single iteration for all $x\leq 0$. This is because $q(x)=0$ on $(-\infty,0]$, so the problem is significantly simplified in this region. For $x>0$, the nonzero potential produces a nontrivial operator leading to an increased iteration count.

\begin{figure}[htbp]
    \centering
    \begin{subfigure}[b]{0.49\textwidth}
        \centering
        \includegraphics[width=\textwidth]{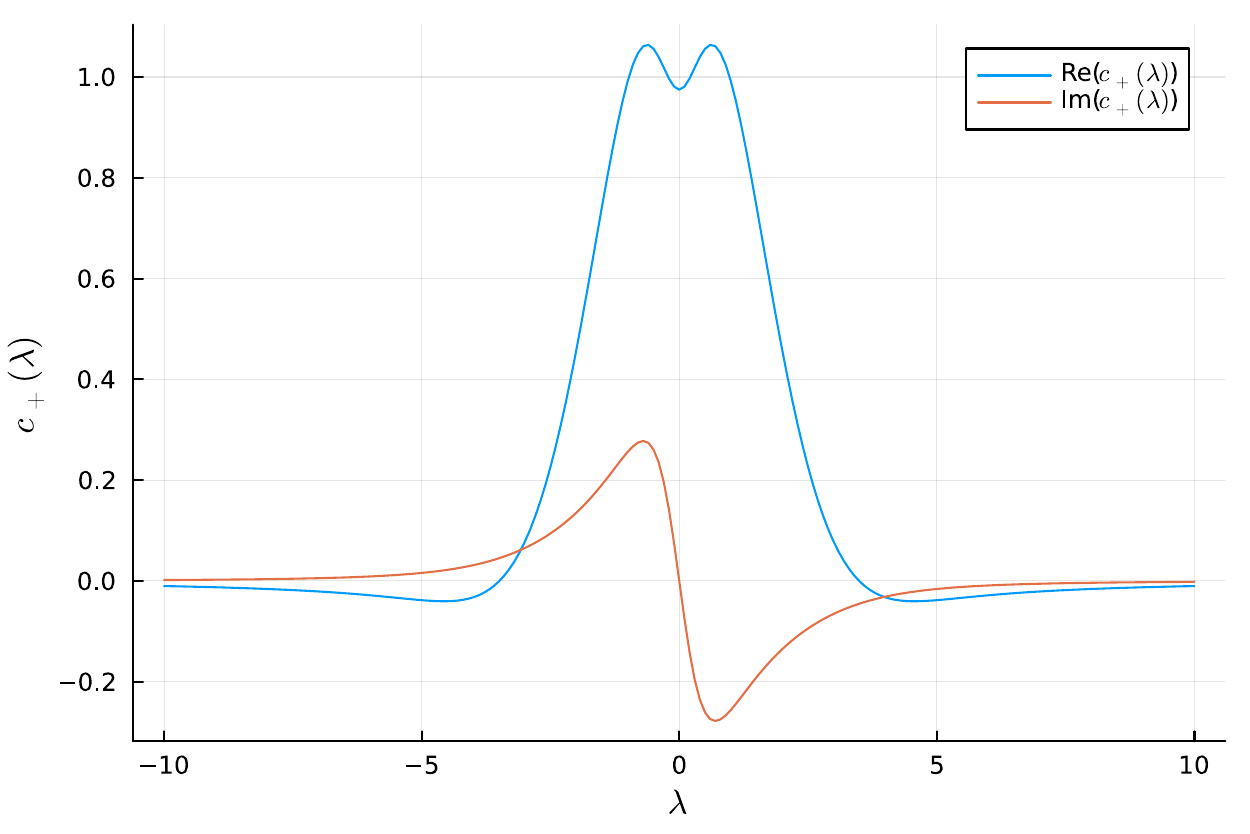}
        \caption{$c_+(\lambda)$}
        \label{fig:DiscontinuousPotential_cp}
    \end{subfigure}
    \hfill
    \begin{subfigure}[b]{0.49\textwidth}
        \centering
        \includegraphics[width=\textwidth]{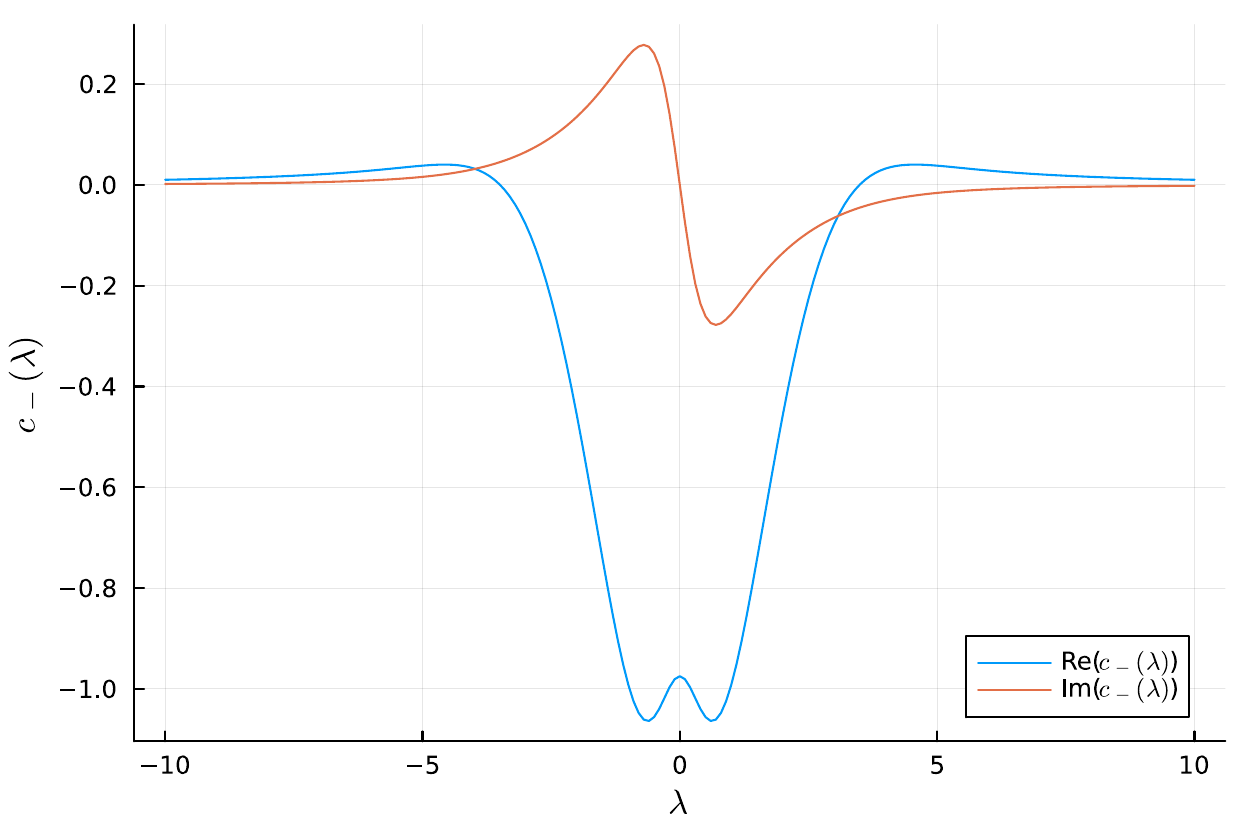}
        \caption{$c_-(\lambda)$}
        \label{fig:DiscontinuousPotential_cm}
    \end{subfigure}
    \caption{Forward transforms $c_+(\lambda)$ and $c_-(\lambda)$ with data and potential given by~\eqref{eq:Ex3} computed using the \gls{uclm}.}
    \label{fig:DiscontinuousPotentialForward}
\end{figure}
\begin{figure}
    \centering
    \includegraphics[width=0.5\linewidth]{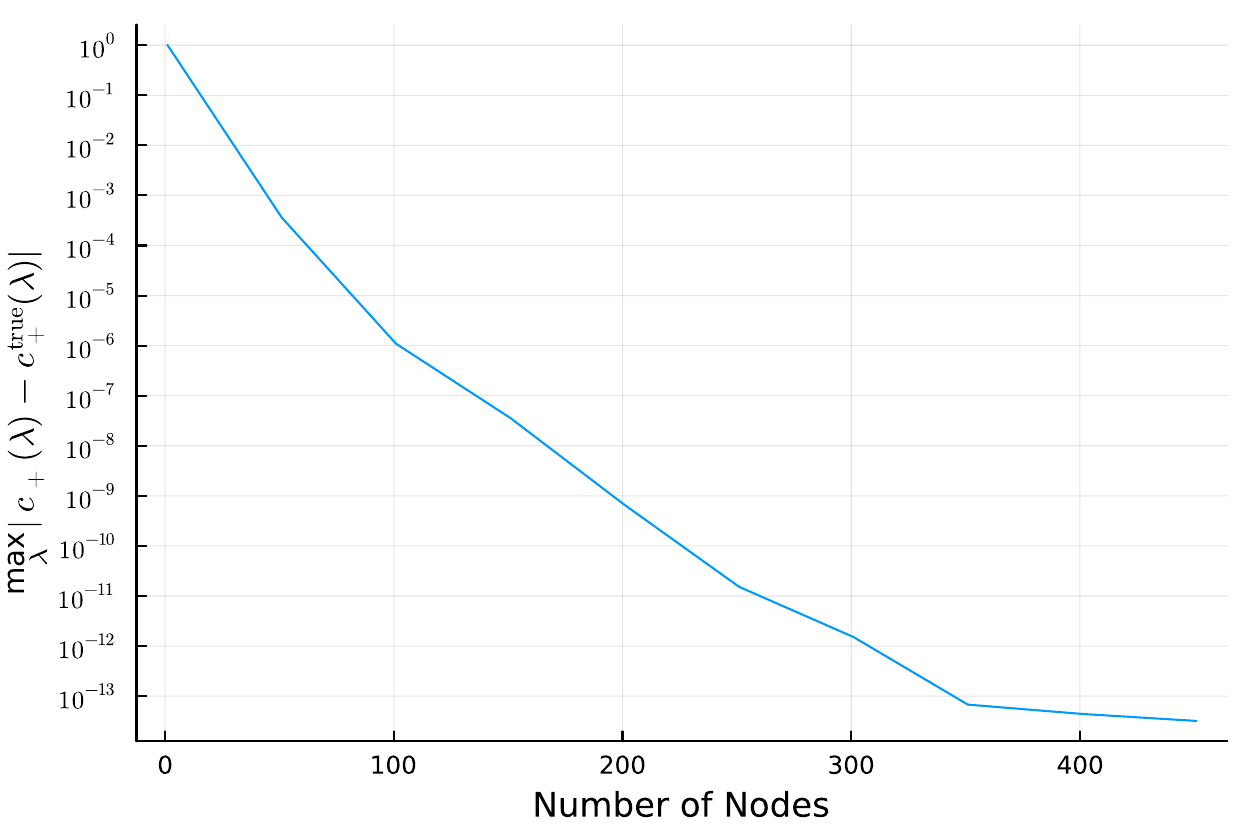}
    \caption{Max absolute error between $c_+(\lambda)$ for data and potential given by~\eqref{eq:Ex3} and highly resolved solution evaluated on a uniform grid in $\lambda \in [-35,35]$ as a function of the number of nodes used. Note that the error for $c_-(\lambda)$ looks extremely similar.}
    \label{fig:cp_err_discQ_contF}
\end{figure}
\begin{figure}
    \centering
    \includegraphics[width=0.5\linewidth]{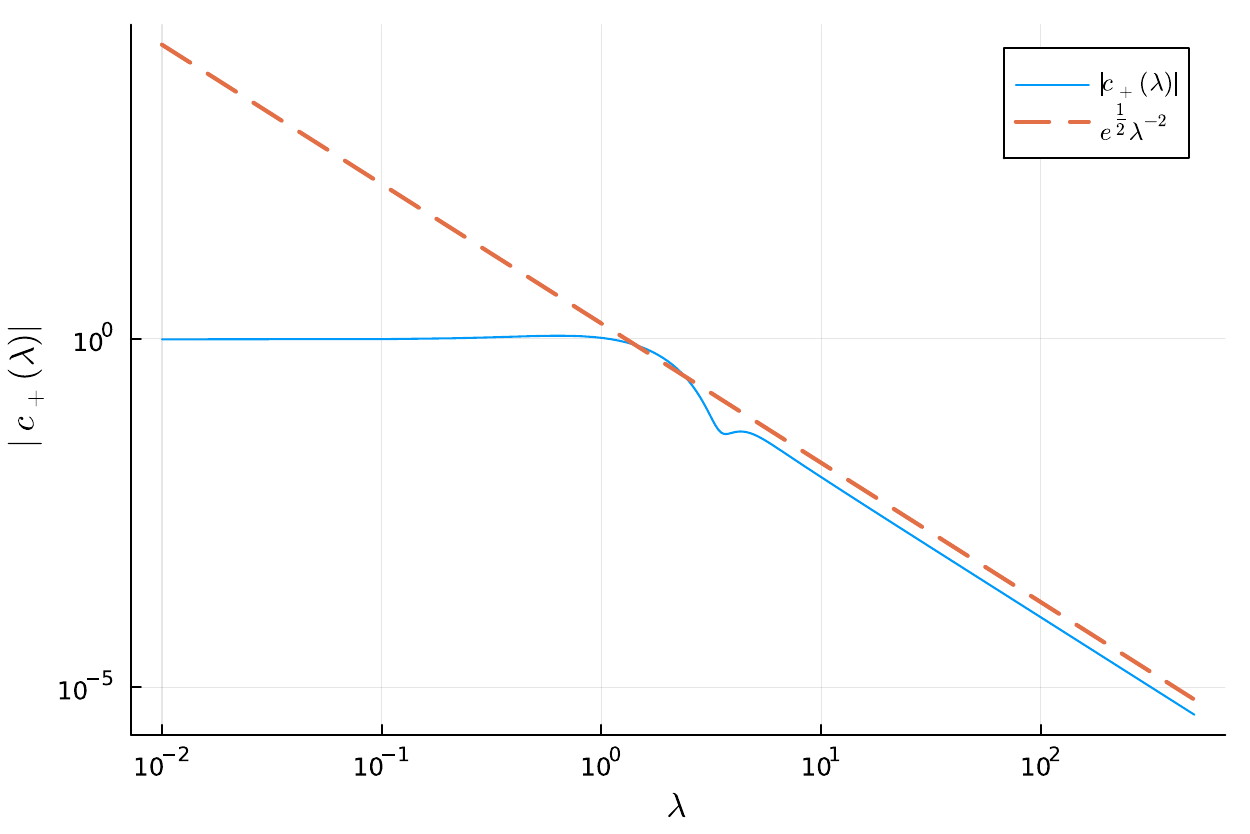}
    \caption{Magnitude of forward transform $|c_+(\lambda)|$ for data and potential given by~\eqref{eq:Ex3}. This was plotted on a log-log scale. We can see that the decay of the transform is algebraic.}
    \label{fig:DiscontinuousPotentialDecay}
\end{figure}
\begin{figure}[htbp]
    \centering
    \begin{subfigure}[b]{0.49\textwidth}
        \centering
        \includegraphics[width=\textwidth]{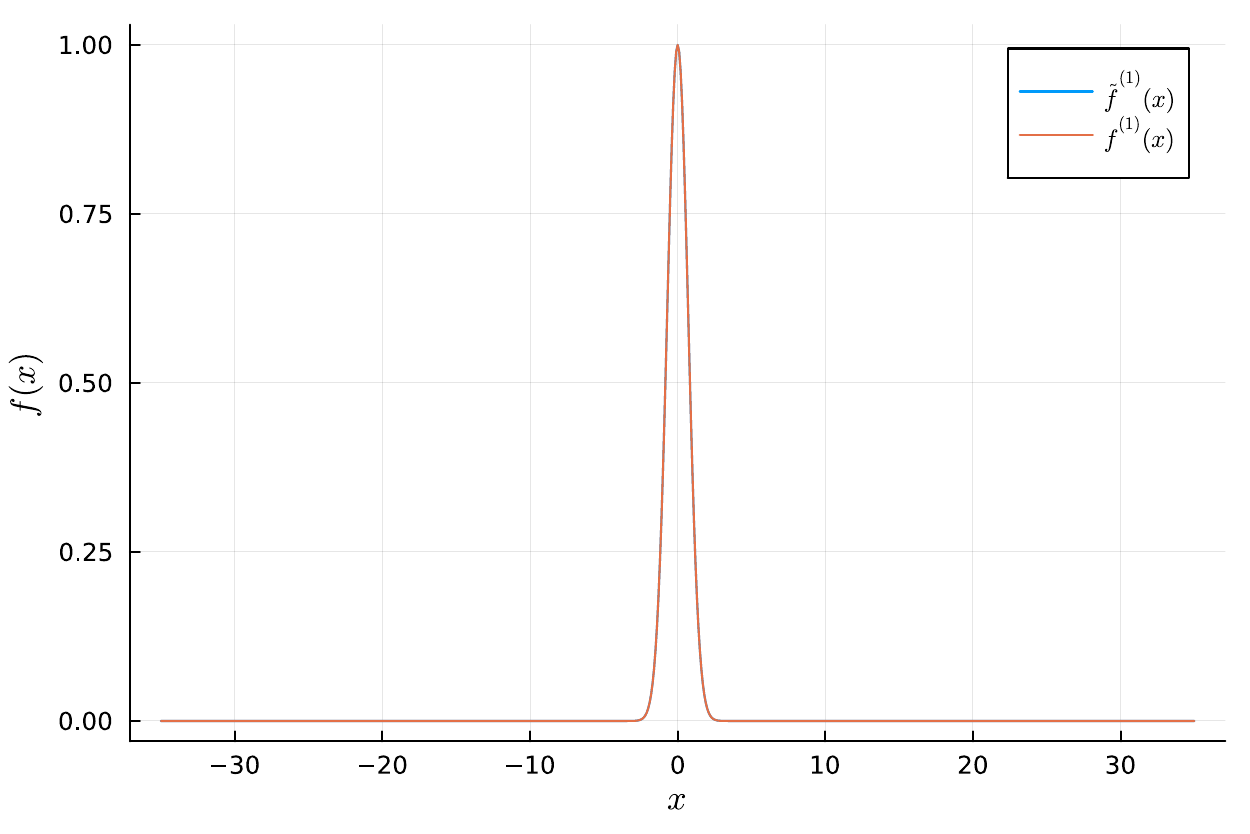}
        \caption{Inverse transform}
        \label{fig:DiscontinuousPotentialInversePlot}
    \end{subfigure}
    \hfill
    \begin{subfigure}[b]{0.49\textwidth}
        \centering
        \includegraphics[width=\textwidth]{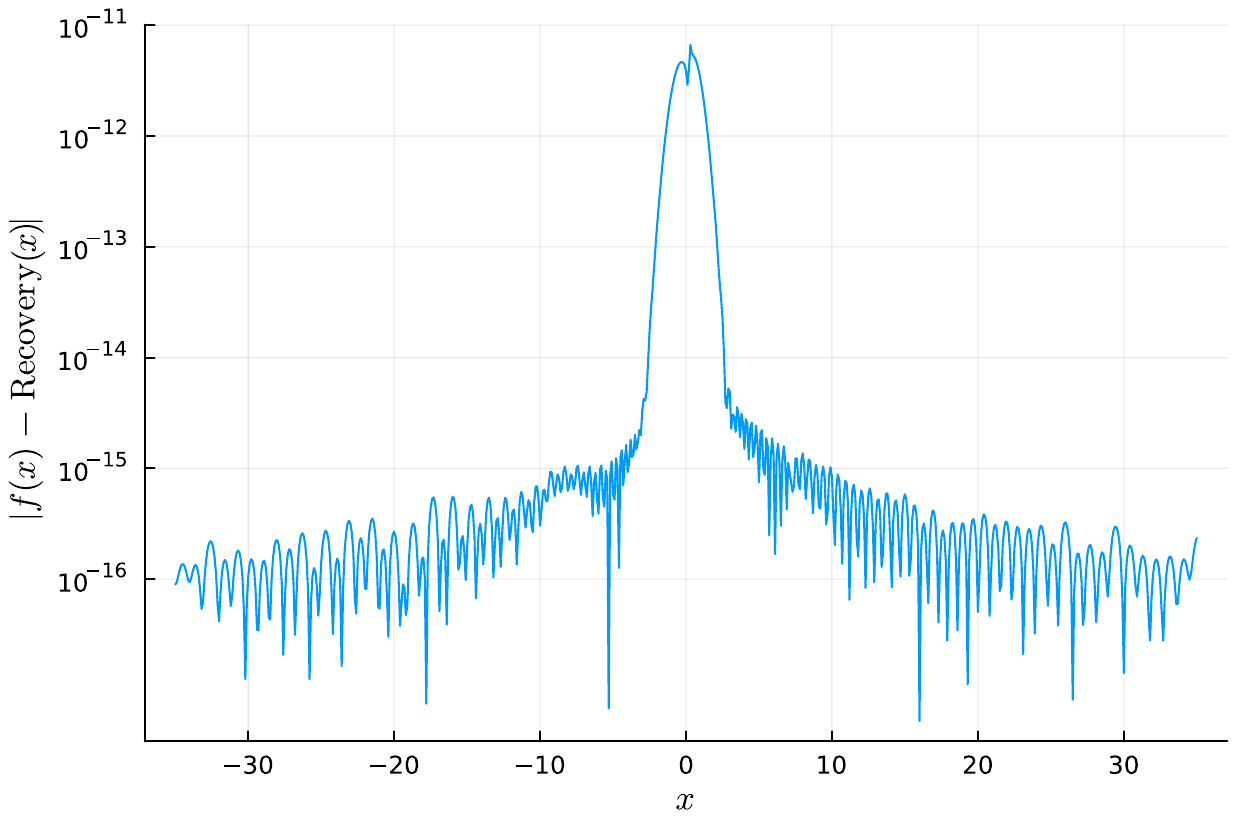}
        \caption{Error}
        \label{fig:DiscontinuousPotentialError}
    \end{subfigure}
    \caption{(a) Recovered first component of $\mathbf{f}(x)$, $\tilde{f}^{(1)}(x)$, and the original $f^{(1)}(x)$ with data and potential given by~\eqref{eq:Ex3}.  (b) Absolute difference between the recovery and the original. The \glspl{ode} for $\mathbf{m}_{\ell,\pm}(x;\lambda)$, $\ell\in\{1,2\}$, were solved using 300 collocation nodes in $x$, while those for $\mathbf{p}_\pm(x;\lambda)$ used 500 nodes. The reflection coefficients were represented using 270 coefficients, $c_\pm(\lambda)$ were expanded with 498 coefficients, and $\mathbf{m}_{\ell,\pm}(x;\lambda)$ were expanded using 232 coefficients.}
    \label{fig:DiscontinuousPotentialInverse}
\end{figure}
\begin{figure}
    \centering
    \includegraphics[width=0.5\linewidth]{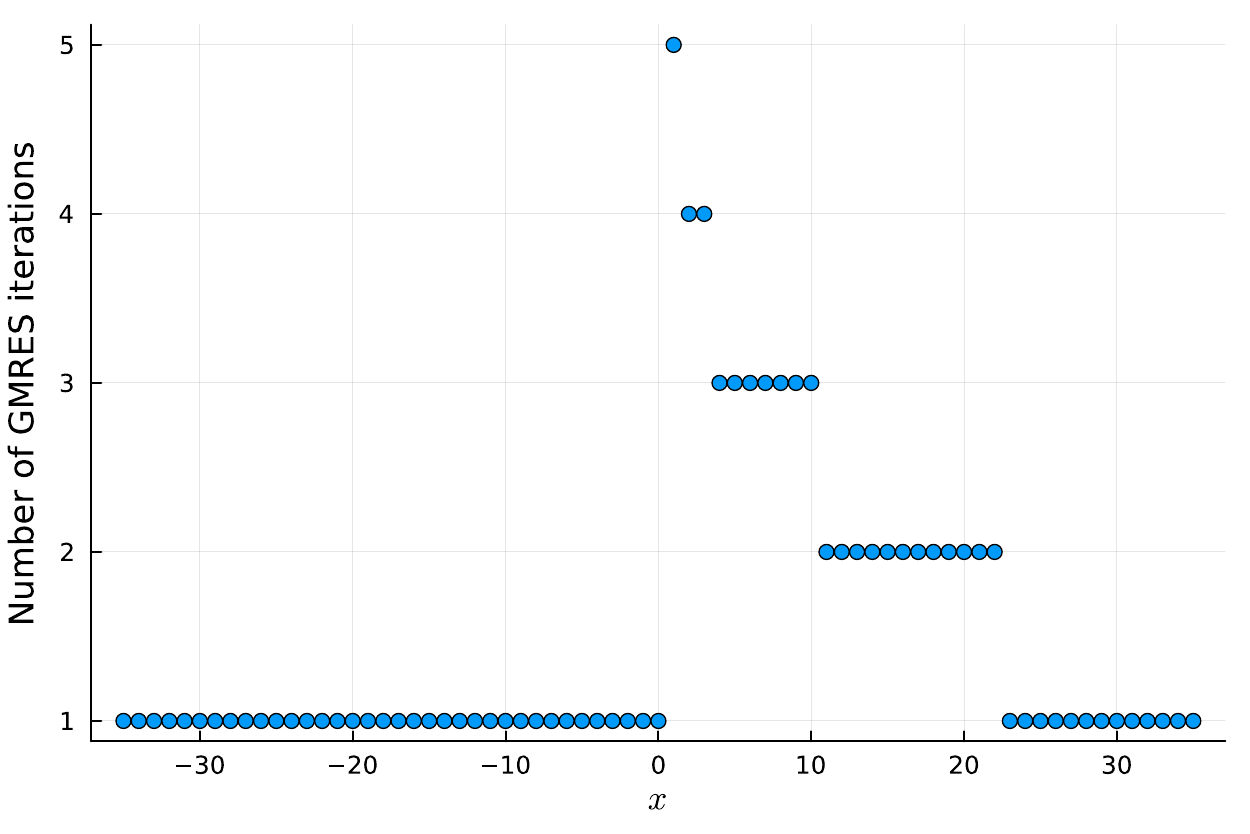}
    \caption{Number of \gls{gmres} iterations required to achieve a residual less than $10^{-10}$ for data and potential given by~\eqref{eq:Ex3}.}
    \label{fig:DiscontinuousPotentialGMRES}
\end{figure}

\subsubsection{Example 4: Gaussian potential, discontinuous data, and \texorpdfstring{$\tau=1$}{tau=1}}

Let 
\begin{equation}\label{eq:Ex4}
    \mathbf{f}(x)=\begin{bmatrix} f^{(1)}(x) \\ f^{(2)}(x) \end{bmatrix},\quad f^{(1)}(x)=f^{(2)}(x)=\begin{cases}
        0,\quad &x\leq 0,\\
        e^{-x},\quad &x>0,
    \end{cases}\quad q(x)=e^{-x^2},\quad \tau=1.
\end{equation}
This choice of $\tau$ produces no discrete spectrum. The discontinuity in the data produces algebraic decay in the transform (Figure~\ref{fig:DiscontinuousRHSDecay}), in contrast to the exponential decay observed in Example~\ref{sec:Ex1}. Despite this, the reconstruction remains accurate to near machine precision, and the \gls{gmres} iteration counts behave similarly to the smooth case. This demonstrates that the method is robust to discontinuities in the input data.  

\begin{figure}[htbp]
    \centering
    \begin{subfigure}[b]{0.49\textwidth}
        \centering
        \includegraphics[width=\textwidth]{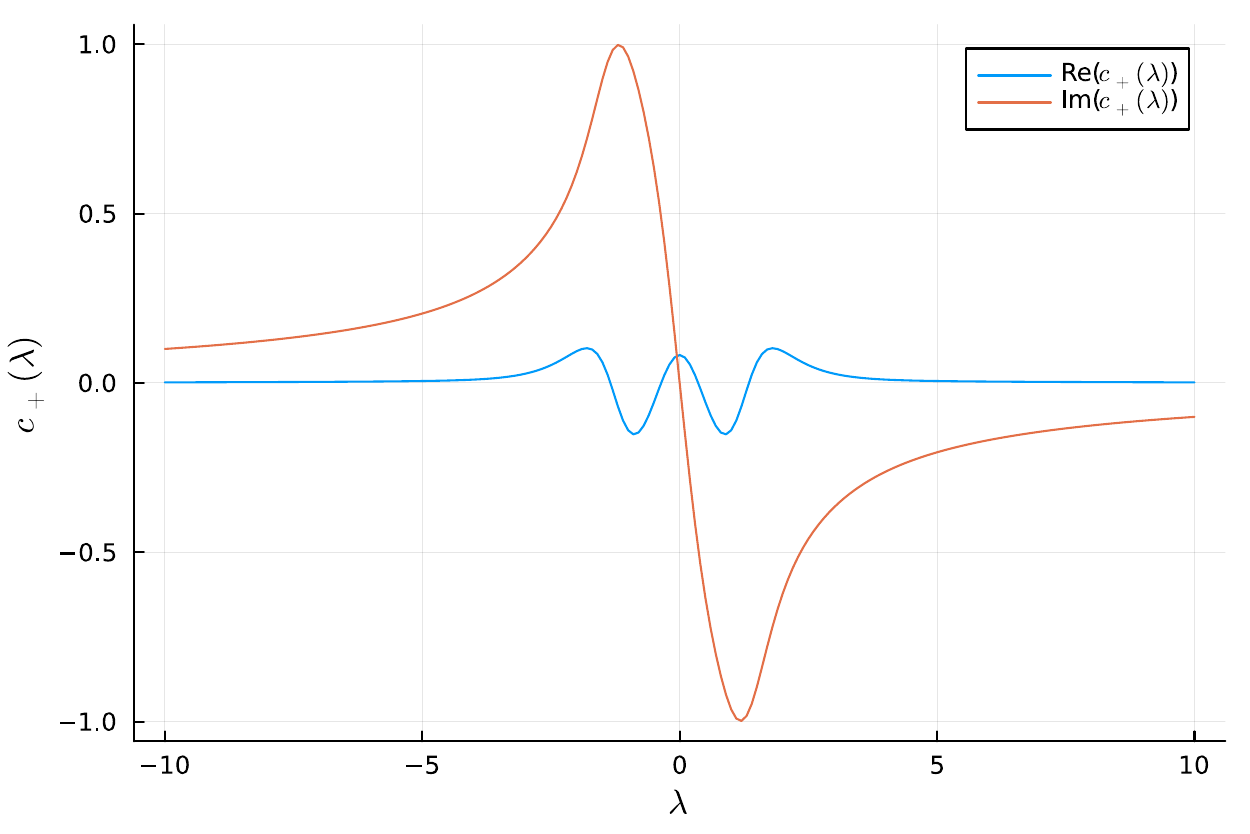}
        \caption{$c_+(\lambda)$}
        \label{fig:DiscontinuousRHS_cp}
    \end{subfigure}
    \hfill
    \begin{subfigure}[b]{0.49\textwidth}
        \centering
        \includegraphics[width=\textwidth]{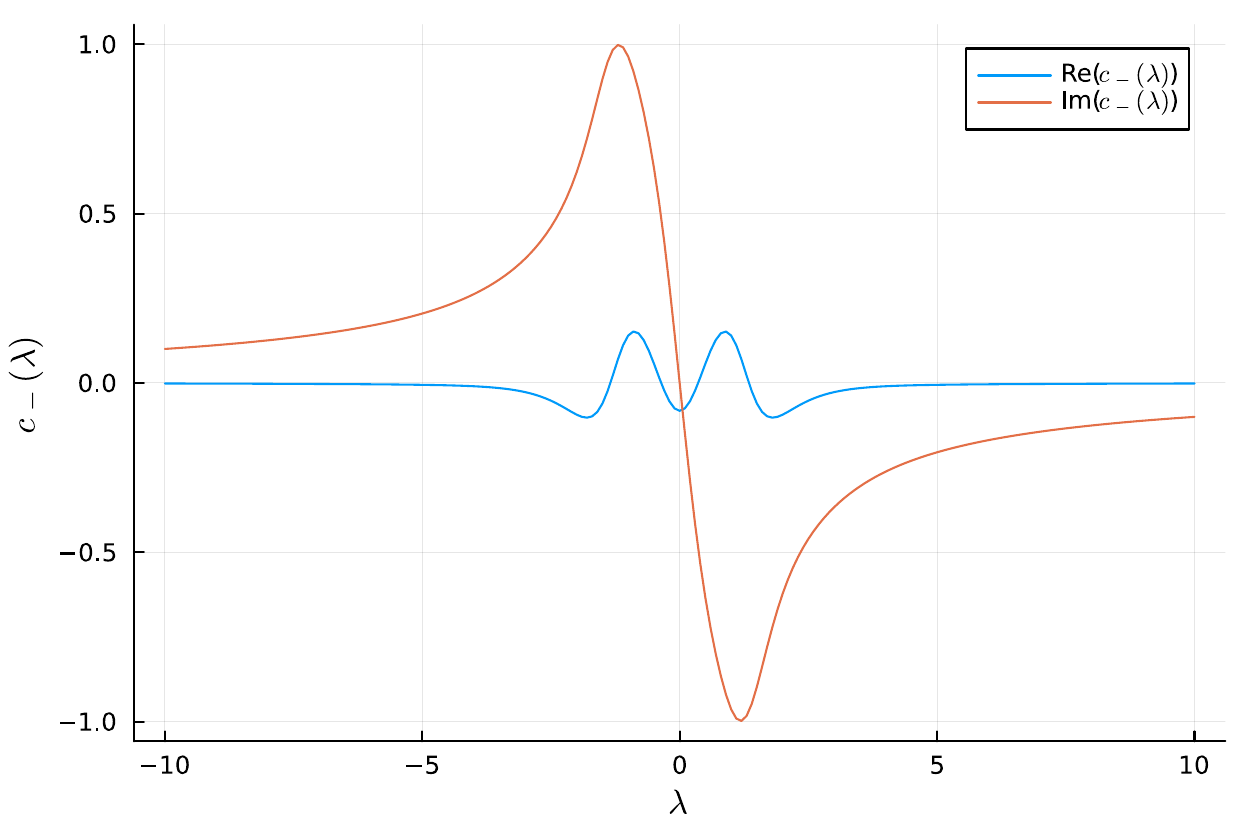}
        \caption{$c_-(\lambda)$}
        \label{fig:DiscontinuousRHS_cm}
    \end{subfigure}
    \caption{Forward transforms $c_+(\lambda)$ and $c_-(\lambda)$ with data and potential given by~\eqref{eq:Ex4} computed using the \gls{uclm}.}
    \label{fig:DiscontinuousRHSForward}
\end{figure}
\begin{figure}
    \centering
    \includegraphics[width=0.5\linewidth]{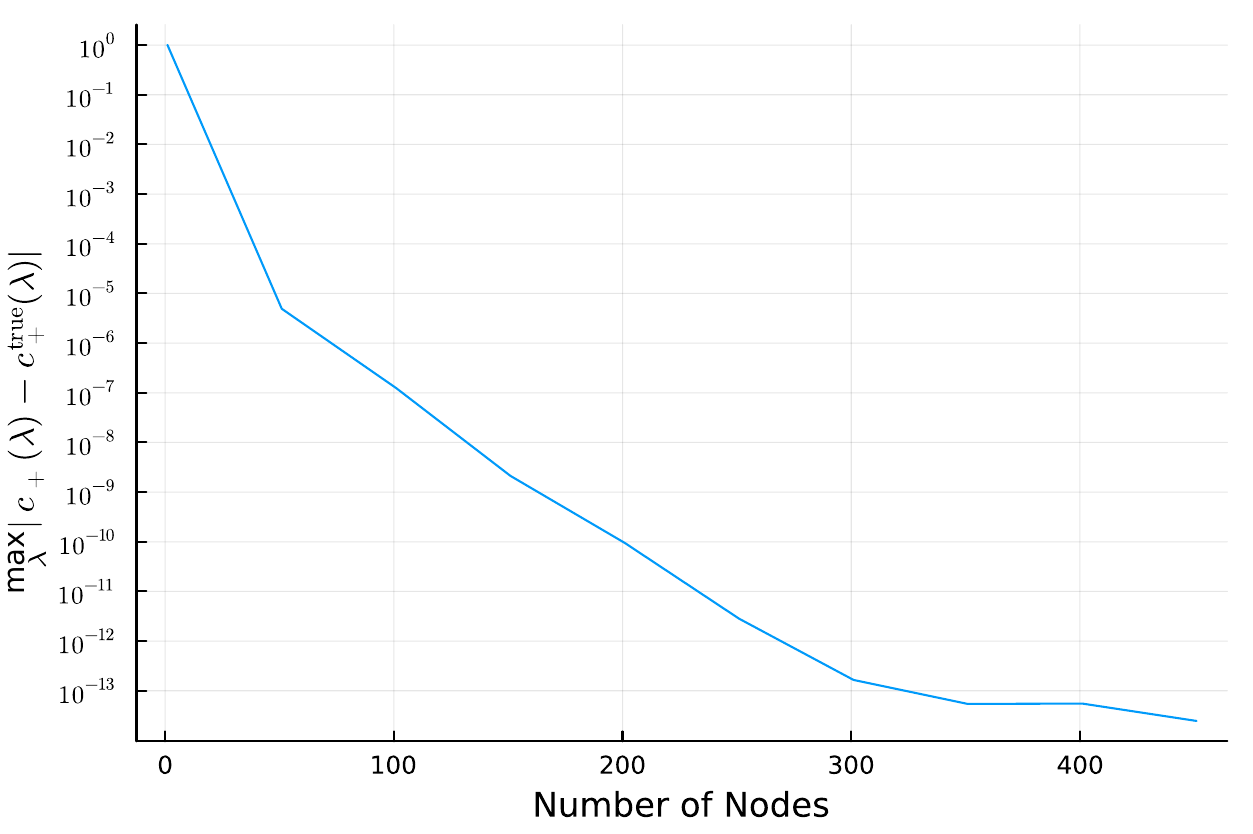}
    \caption{Max absolute error between $c_+(\lambda)$ for data and potential given by~\eqref{eq:Ex4} and highly resolved solution evaluated on a uniform grid in $\lambda \in [-35,35]$ as a function of the number of nodes used. Note that the error for $c_-(\lambda)$ looks extremely similar.}
    \label{fig:cp_errs_contPot_discF}
\end{figure}
\begin{figure}
    \centering
    \includegraphics[width=0.5\linewidth]{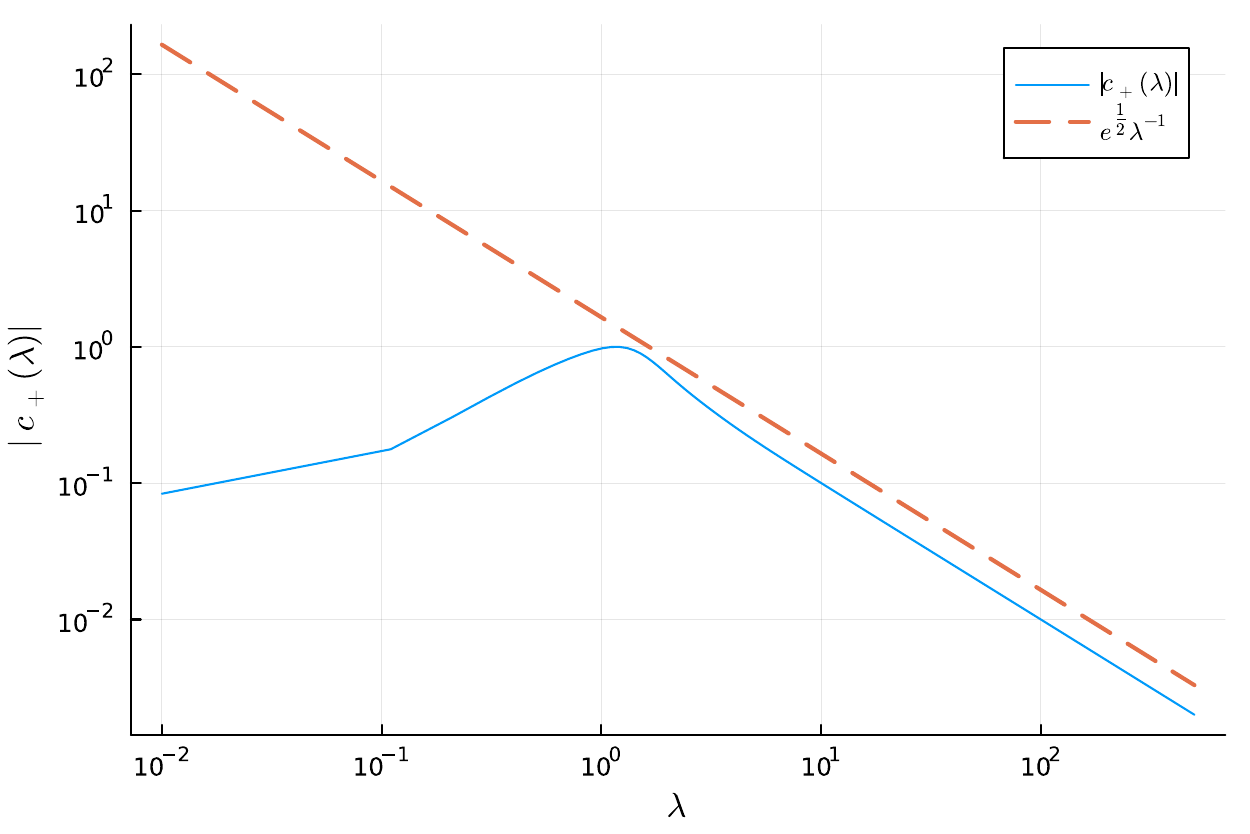}
    \caption{Magnitude of forward transform $|c_+(\lambda)|$ with data and potential given by~\eqref{eq:Ex4}. This was plotted on a log-log scale. We can see that the decay of the transform is algebraic.}
    \label{fig:DiscontinuousRHSDecay}
\end{figure}
\begin{figure}[htbp]
    \centering
    \begin{subfigure}[b]{0.49\textwidth}
        \centering
        \includegraphics[width=\textwidth]{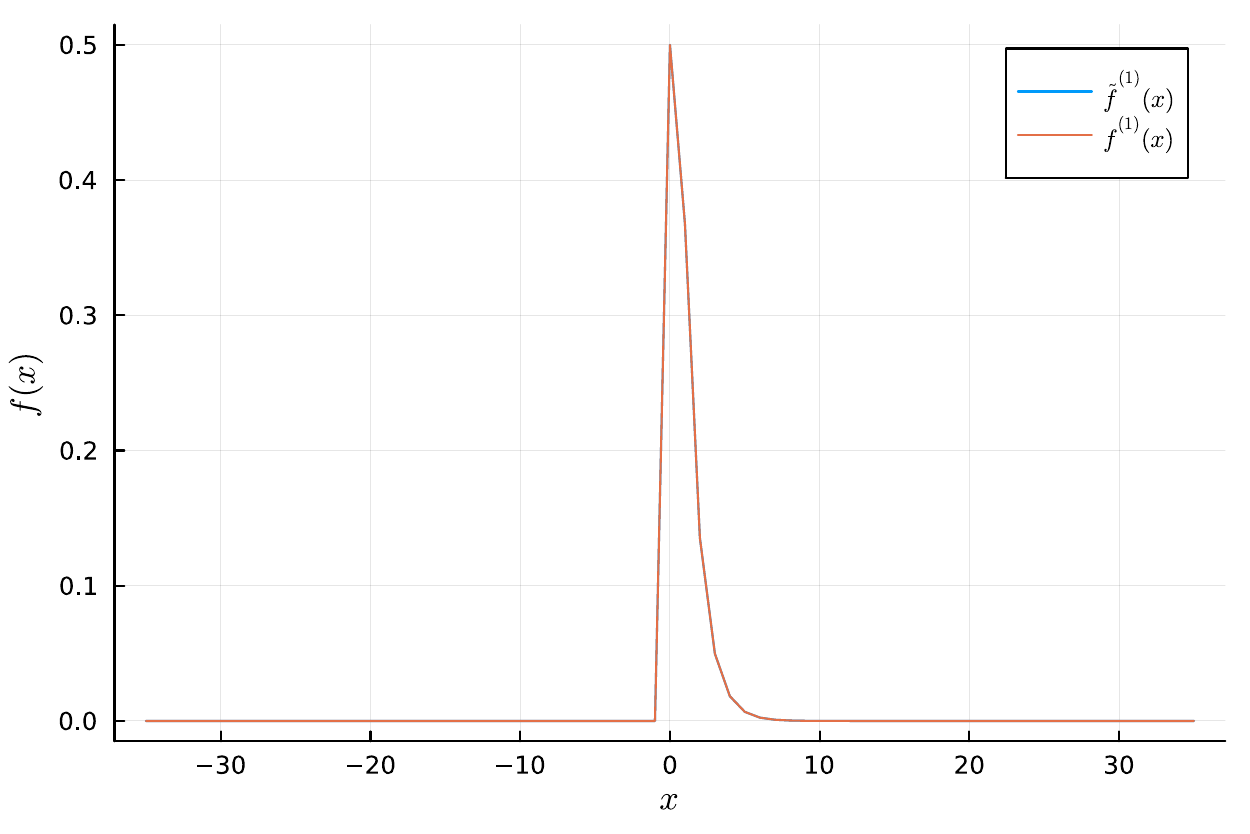}
        \caption{Inverse transform}
        \label{fig:DiscontinuousRHSInversePlot}
    \end{subfigure}
    \hfill
    \begin{subfigure}[b]{0.49\textwidth}
        \centering
        \includegraphics[width=\textwidth]{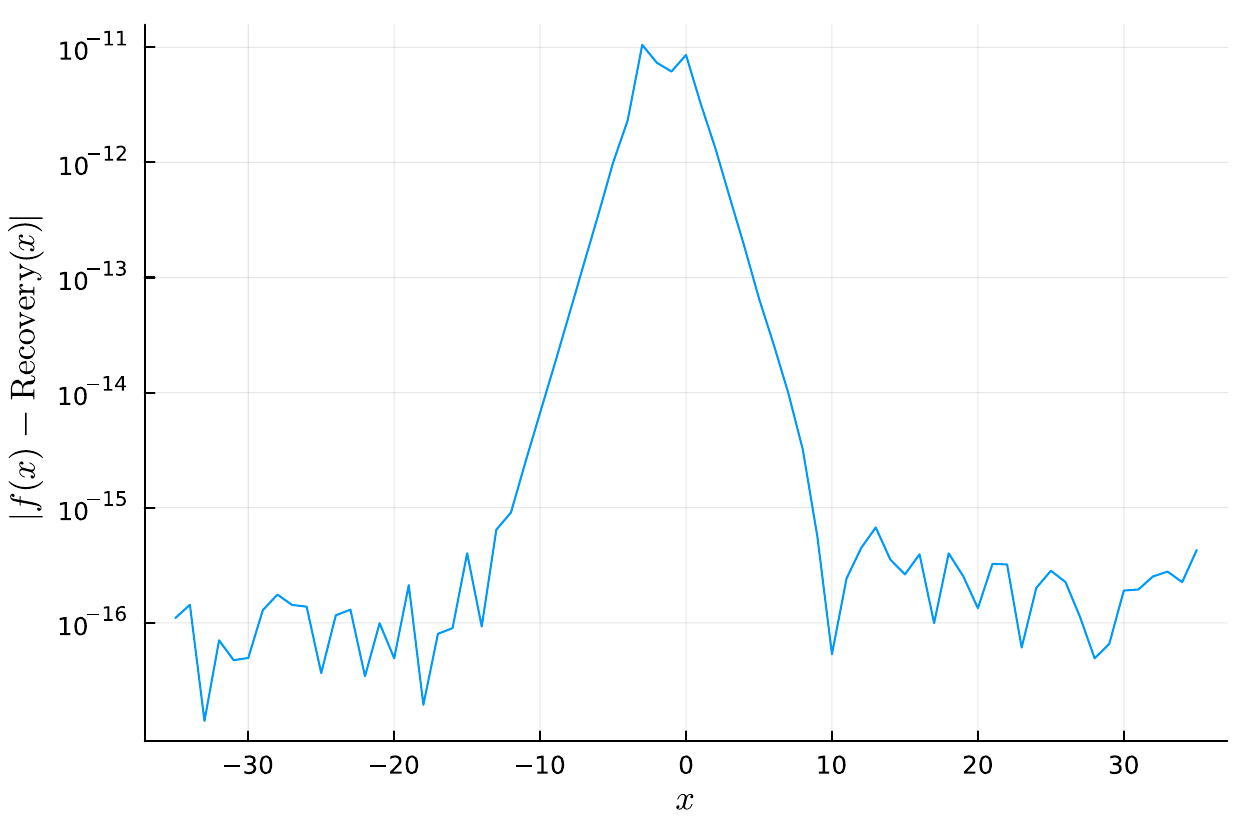}
        \caption{Error}
        \label{fig:DiscontinuousRHSError}
    \end{subfigure}
    \caption{(a) Recovered first component of $\mathbf{f}(x)$, $\tilde{f}^{(1)}(x)$, and the original $f^{(1)}(x)$ with data and potential given by~\eqref{eq:Ex4}. (b) Absolute difference between the recovery and the original. The \glspl{ode} for $\mathbf{m}_{\ell,\pm}(x;\lambda)$, $\ell\in\{1,2\}$, were solved using 300 collocation nodes in $x$, while those for $\mathbf{p}_\pm(x;\lambda)$ used 500 nodes. The reflection coefficients were represented using 270 coefficients, $c_\pm(\lambda)$ were expanded with 458 coefficients, and $\mathbf{m}_{\ell,\pm}(x;\lambda)$ were expanded using 232 coefficients.}
    \label{fig:DiscontinuousRHSInverse}
\end{figure}
\begin{figure}
    \centering
    \includegraphics[width=0.5\linewidth]{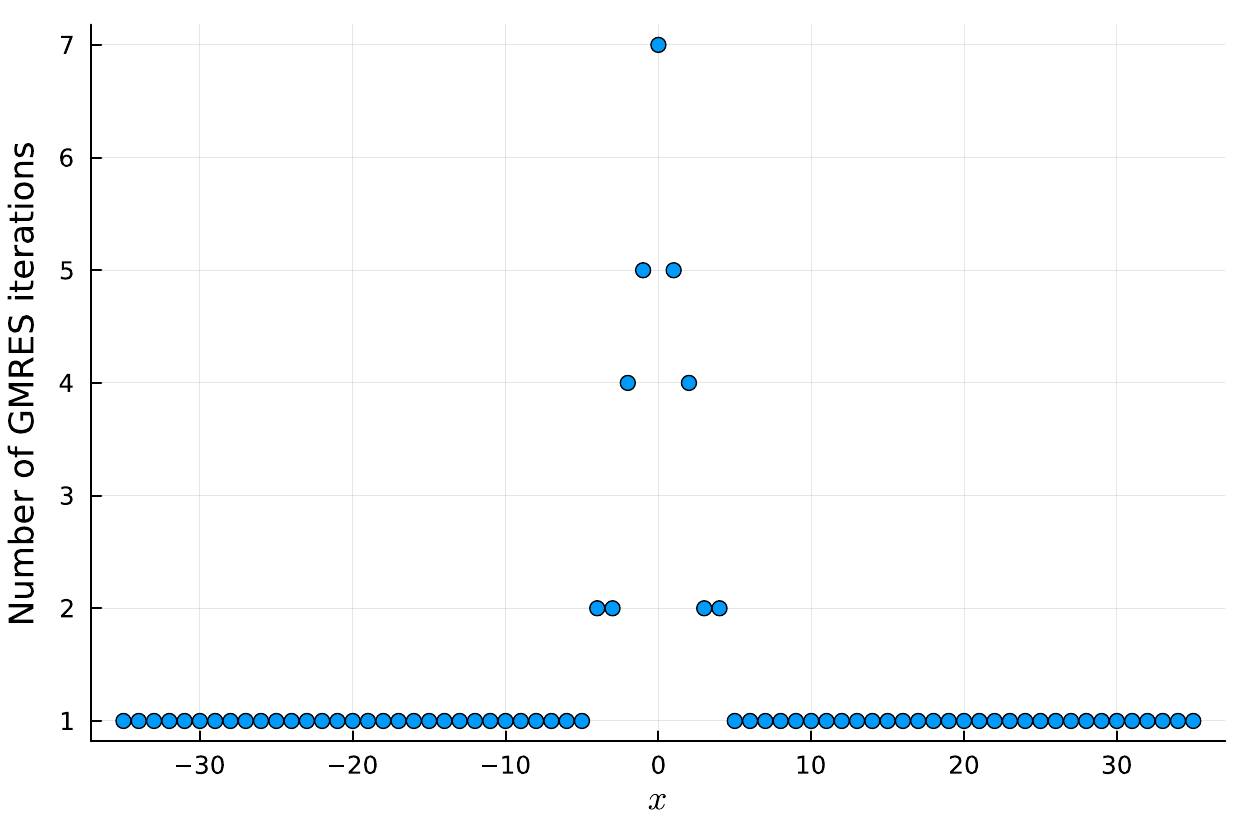}
    \caption{Number of \gls{gmres} iterations required to achieve a residual less than $10^{-10}$ for data and potential given by~\eqref{eq:Ex4}.}
    \label{fig:DiscontinuousRHSGMRES}
\end{figure}

\subsubsection{Example 5: Discontinuous potential, discontinuous data, \texorpdfstring{$\tau=1$}{tau=1}}

Let 
\begin{equation}\label{eq:Ex5}
    \mathbf{f}(x)=\begin{bmatrix} f^{(1)}(x) \\ f^{(2)}(x) \end{bmatrix},\quad f^{(1)}(x)=f^{(2)}(x)=q(x)=\begin{cases}
        0,\quad &x\leq 0,\\
        e^{-x},\quad &x>0,
    \end{cases}\quad \tau=1.
\end{equation}
This choice of $\tau$ produces no discrete spectrum. When both the potential and data are discontinuous, we again observe algebraic decay in the forward transform. Nevertheless, the reconstruction accuracy and \gls{gmres} performance remain comparable to the previous examples, indicating that the method is accurate even in the presence of simultaneous discontinuities. 

\begin{figure}[htbp]
    \centering
    \begin{subfigure}[b]{0.49\textwidth}
        \centering
        \includegraphics[width=\textwidth]{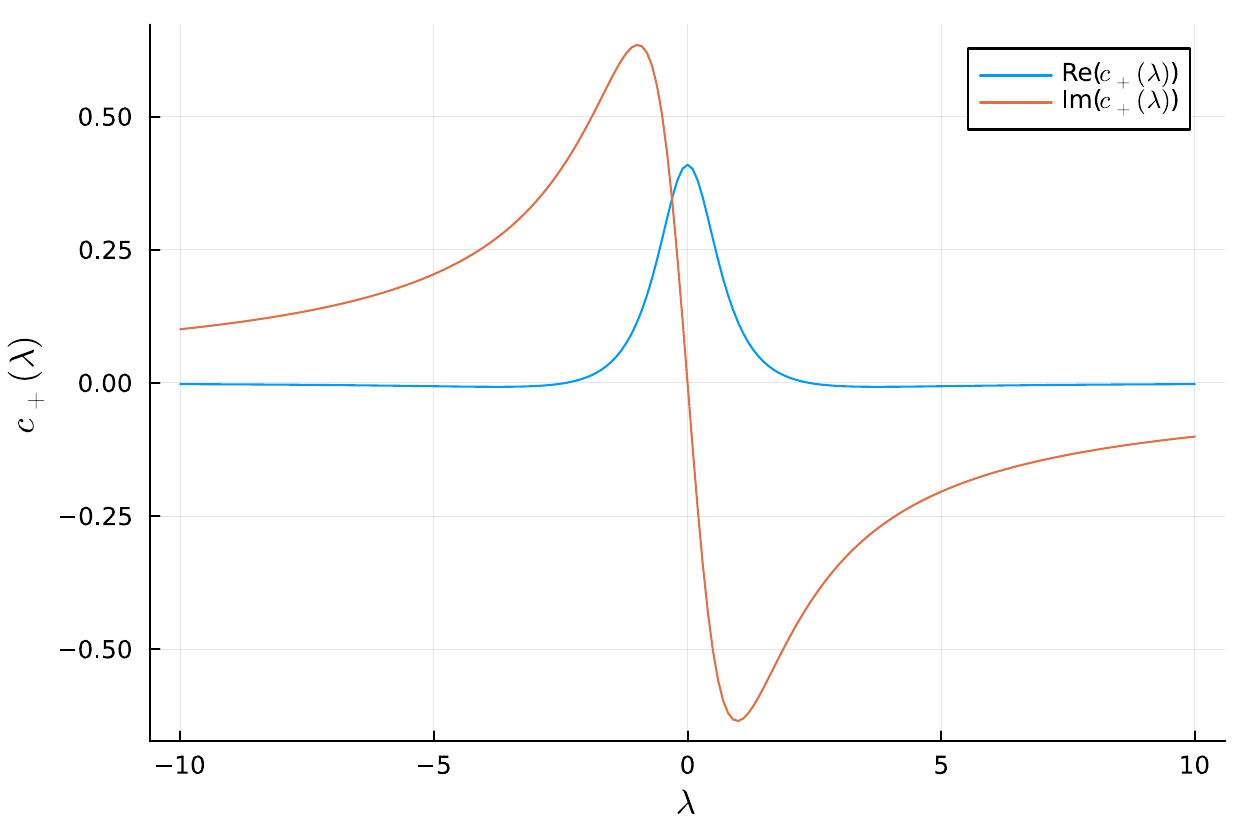}
        \caption{$c_+(\lambda)$}
        \label{fig:DiscontinuousPotentialRHSSame_cp}
    \end{subfigure}
    \hfill
    \begin{subfigure}[b]{0.49\textwidth}
        \centering
        \includegraphics[width=\textwidth]{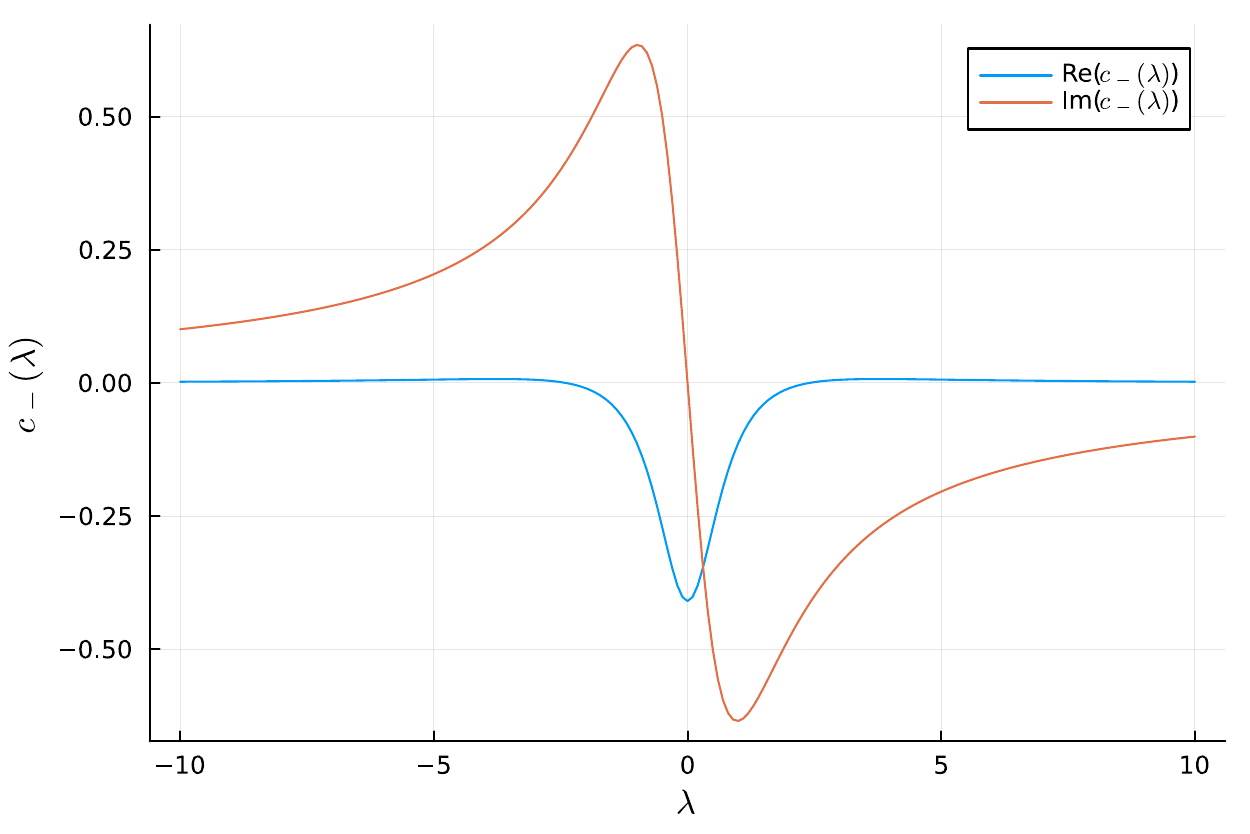}
        \caption{$c_-(\lambda)$}
        \label{fig:DiscontinuousPotentialRHSSame_cm}
    \end{subfigure}
    \caption{Forward transforms $c_+(\lambda)$ and $c_-(\lambda)$ with data and potential given by~\eqref{eq:Ex5} computed using the \gls{uclm}.}
    \label{fig:DiscontinuousPotentialRHSSameForward}
\end{figure}
\begin{figure}
    \centering
    \includegraphics[width=0.5\linewidth]{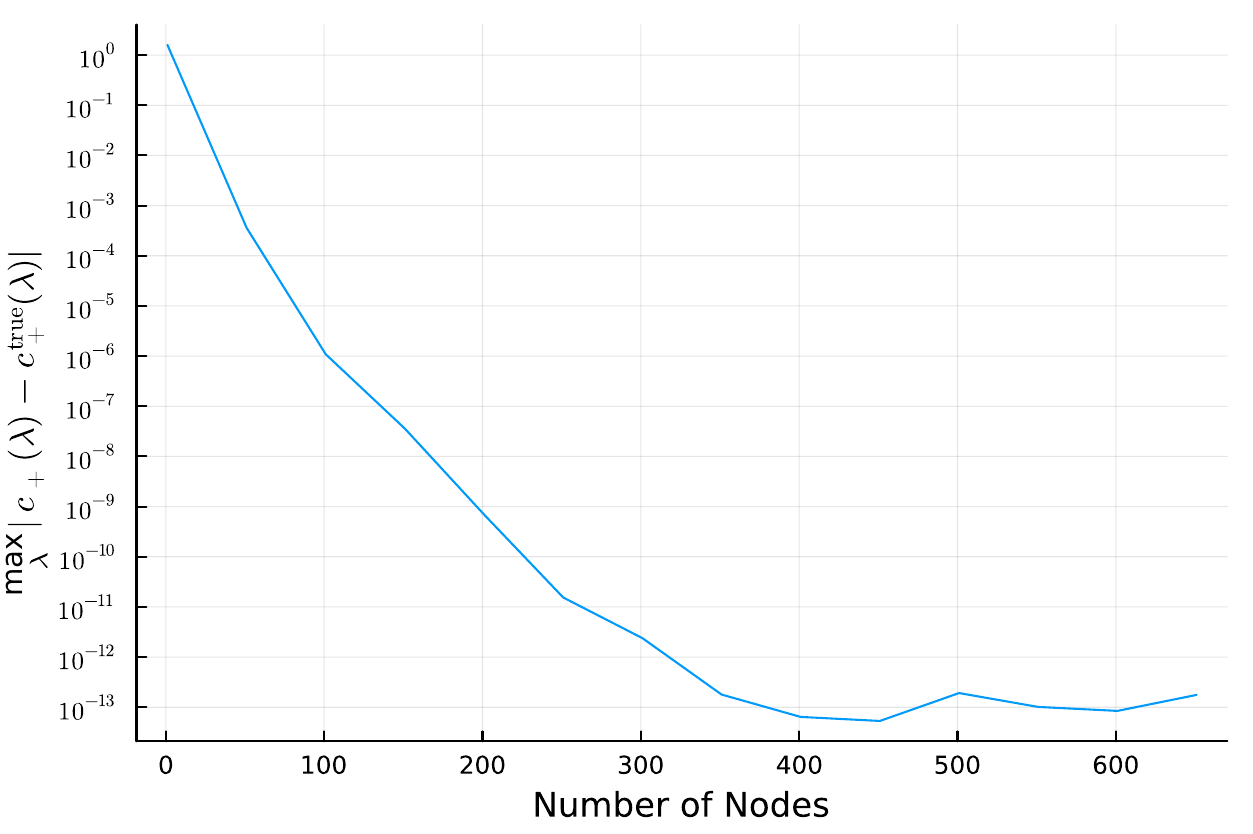}
    \caption{Max absolute error between $c_+(\lambda)$ with data and potential given by~\eqref{eq:Ex5} and highly resolved solution evaluated on a uniform grid in $\lambda \in [-35,35]$ as a function of the number of nodes used. Note that the error for $c_-(\lambda)$ looks extremely similar.}
    \label{fig:cp_err_discQF}
\end{figure}
\begin{figure}
    \centering
    \includegraphics[width=0.5\linewidth]{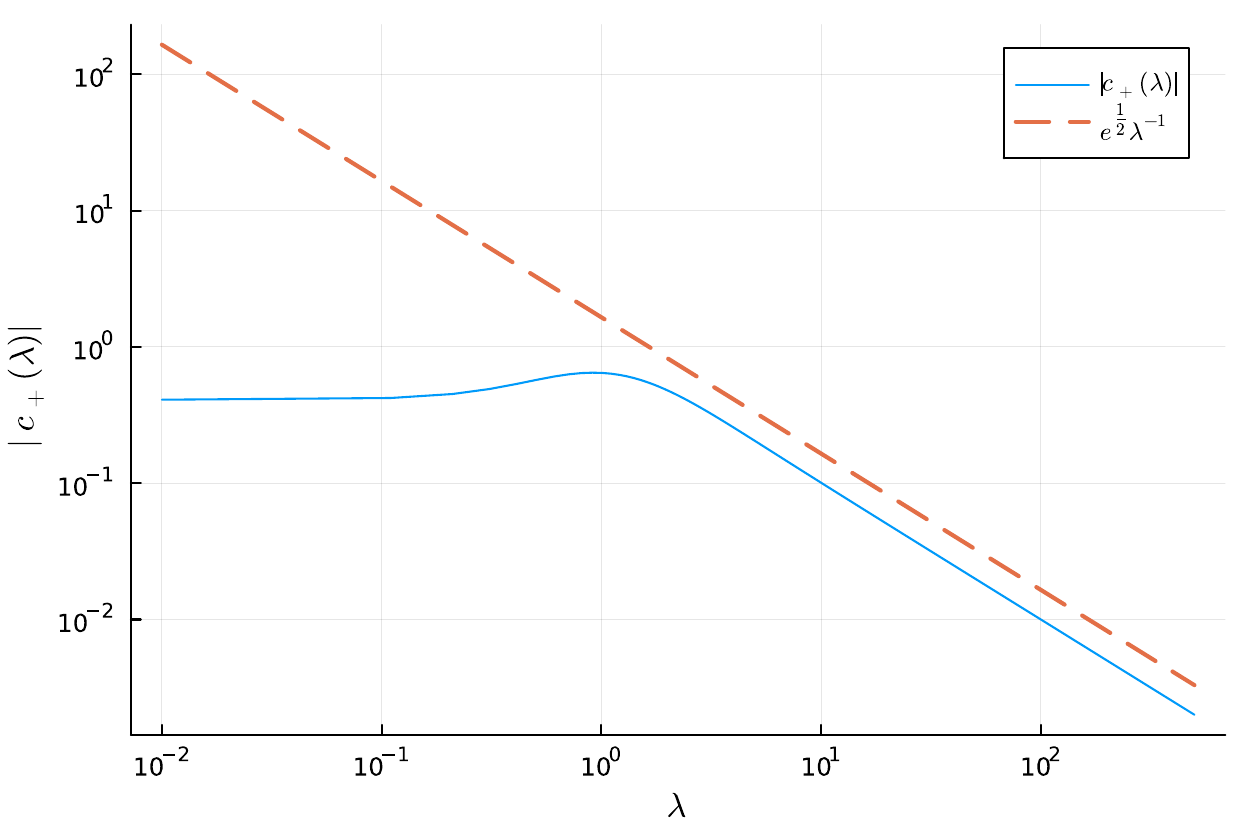}
    \caption{Magnitude of forward transform $|c_+(\lambda)|$ with data and potential given by~\eqref{eq:Ex5}. This was plotted on a log-log scale. We can see that the decay of the transform is algebraic.}
    \label{fig:DiscontinuousPotentialRHSSameDecay}
\end{figure}
\begin{figure}[htbp]
    \centering
    \begin{subfigure}[b]{0.49\textwidth}
        \centering
        \includegraphics[width=\textwidth]{Figures/DiscontinuousRHSInvTRans2.pdf}
        \caption{Inverse transform}
        \label{fig:DiscontinuousPotentialRHSSameInversePlot}
    \end{subfigure}
    \hfill
    \begin{subfigure}[b]{0.49\textwidth}
        \centering
        \includegraphics[width=\textwidth]{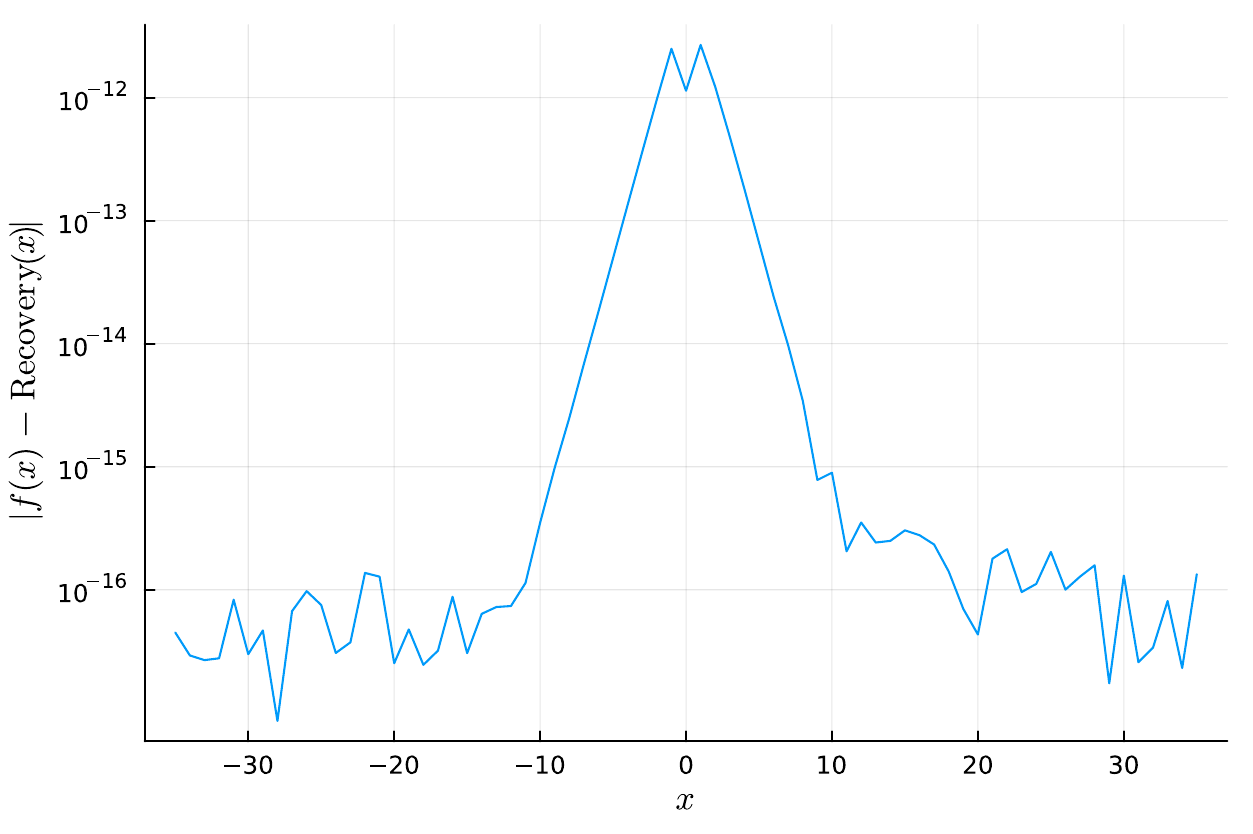}
        \caption{Error}
        \label{fig:DiscontinuousPotentialRHSSameError}
    \end{subfigure}
    \caption{(a) Recovered first component of $\mathbf{f}(x)$, $\tilde{f}^{(1)}(x)$, and the original $f^{(1)}(x)$ with data and potential given by~\eqref{eq:Ex5}. (b) Absolute difference between the recovery and the original. The \glspl{ode} for $\mathbf{m}_{\ell,\pm}(x;\lambda)$, $\ell\in\{1,2\}$, were solved using 300 collocation nodes in $x$, while those for $\mathbf{p}_\pm(x;\lambda)$ used 500 nodes. The reflection coefficients were represented using 270 coefficients, $c_\pm(\lambda)$ were expanded with 448 coefficients, and $\mathbf{m}_{\ell,\pm}(x;\lambda)$ were expanded using 232 coefficients. }
    \label{fig:DiscontinuousPotentialRHSSameInverse}
\end{figure}
\begin{figure}
    \centering
    \includegraphics[width=0.5\linewidth]{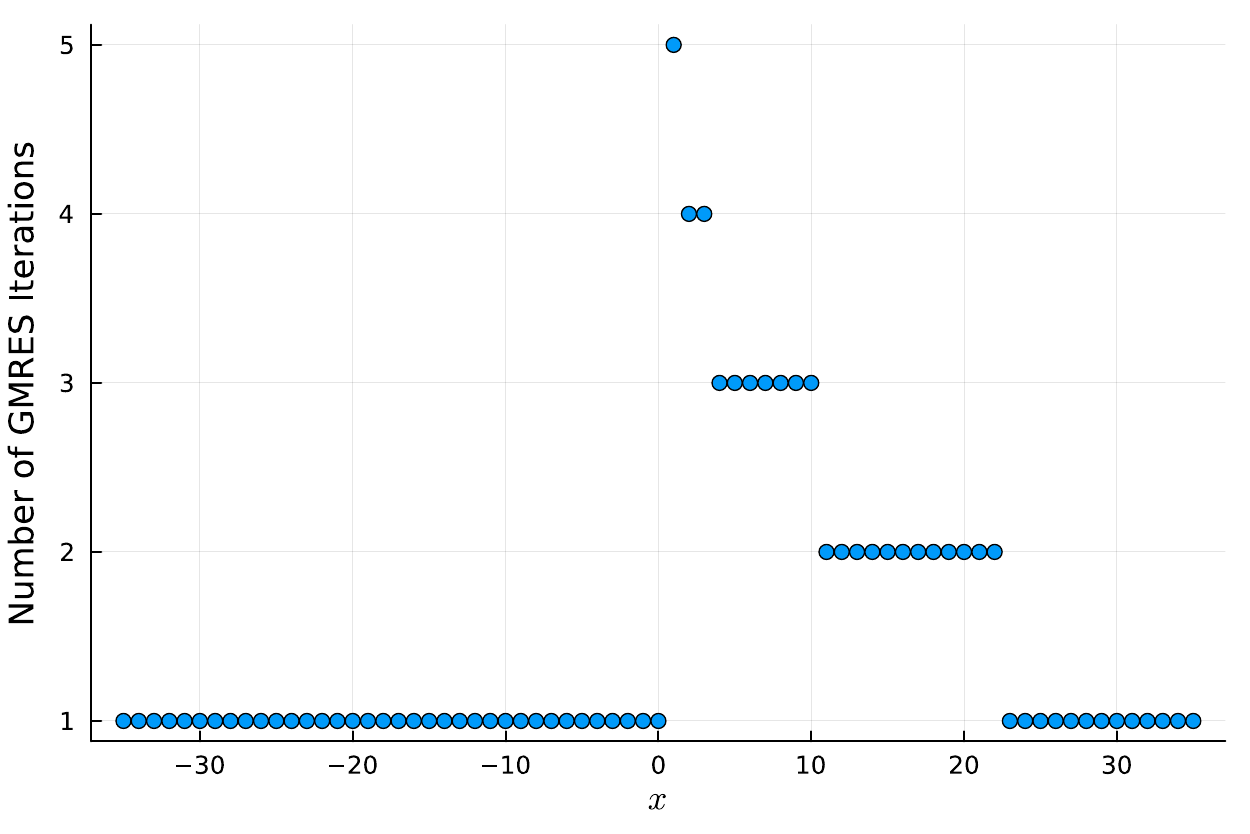}
    \caption{Number of \gls{gmres} iterations required to achieve a residual less than $10^{-10}$ for data and potential given by~\eqref{eq:Ex5}.}
    \label{fig:DiscontinuousPotentialRHSSameGMRES}
\end{figure}

\section*{Appendix}

\appendix 

\section{Rational Functions and Oscillatory Cauchy Integrals}\label{sec:RationalBasis}

In this appendix we summarize the analytic and computational framework developed in \cite{trogdon_scattering_2021}, building on the rational approximation techniques introduced in \cite{trogdon_rational_2014}. The oscillatory rational basis underlying these constructions originates in \cite{weber_numerical_1980,11e5cc21-6b4a-3b97-8220-bac5264f5b1f,Olver2011ComputingTH}. None of results presented in this appendix are novel. We include this material to make the present paper self-contained.

Note that the oscillatory rational basis and associated operations, such as function approximation, multiplication, derivatives, integration, Cauchy integrals, and inner products have been incorporated into a Julia library, \texttt{OperatorApproximation.jl}, as a result of this work~\cite{OperatorApproximation}. Within this code base, we cycle between the presented basis, $R_{j,\alpha}(k)$, and a modified basis
\begin{equation*}
    \tilde{R}_{j,\alpha}(k)=e^{ik\alpha}\left(\frac{k-i}{k+i}\right)^j.
\end{equation*}
Swapping between these two bases is trivial, and swapping can be advantageous for certain operations. For example, the multiplication operation is considerably simpler using the $\tilde{R}_{j,\alpha}(k)$ basis, while the application of the Cauchy integral operator is simpler using $R_{j,\alpha}(k)$. For simplicity, we present everything in this work in terms of $R_{j,\alpha}(k)$.

\subsection{Rational Approximation}

We consider the problem of the rational approximation of $f:\mathbb R\rightarrow\mathbb C$, under suitable regularity conditions. We begin by discussing trigonometric interpolation of an associated continuous periodic function $F$. Define $\theta_j=2\pi j/n$ for $j=0,...,n-1$, $n\in\mathbb N$. Further define
\begin{equation*}
    n_+=\lfloor n/2\rfloor,\quad n_-=\lfloor (n-1)/2\rfloor.
\end{equation*}
\begin{definition}
    The discrete Fourier transform of order $n$ of a continuous function $F$ is the mapping
    \begin{align*}
        \mathcal{F}_nF = \left[\Tilde{F}_0,\Tilde{F}_1,...,\Tilde{F}_n\right]^T,\\
        \Tilde{F}_k=\frac{1}{n}\sum_{j=0}^{n-1}e^{-ik\theta_j}F(\theta_j).
    \end{align*}
\end{definition}
The \gls{fft} is an algorithm that implements the discrete Fourier transform in $\mathcal{O}(n\log n)$ floating point operations. To get the coefficients for the trigonometric interpolant of $F$ we use the following formula:
\begin{equation*}
    \mathcal{I}_nF(\theta)=\sum_{k=-n_-}^{n_+}e^{ik\theta}\Tilde{F}_k,\quad\mathcal{I}_n
F(\theta_j)=F(\theta_j),\quad j=0,...,n-1.
\end{equation*}
We now define the following M\"obius transformation
\begin{equation*}
    T(k)=\frac{k-i}{k+i},\quad T^{-1}(z)=\frac{1}{i}\frac{z+1}{z-1},
\end{equation*}
where $T$ maps the real axis onto the unit circle. There is a set of basis functions related to these transformations given by
\begin{equation*}
    R_{j,\alpha}(k)=e^{ik\alpha}\left[\left(\frac{k-i}{k+i}\right)^j-1\right].
\end{equation*}
These are referred to as the oscillatory rational basis functions when $\alpha\neq0$.

Assume $f$ is a smooth rapidly decaying or rational function on $\mathbb R$, decaying at infinity. Then $F(\theta)=f(T^{-1}(e^{i\theta}))$ maps $f$ to a smooth function on $[0,2\pi]$. Hence, the \gls{fft} can be applied to $F(\theta)$ to obtain an interpolant $\mathcal{I}_nF(\theta)$. Inverting the transformation $k=T^{-1}(e^{i\theta})$, we arrive at the following rational approximation of $f$:
\begin{equation*}
    \mathcal{R}_nf(k):=\mathcal{I}_nF(T^{-1}(k)).
\end{equation*}
This approximation has been shown to converge rapidly~\cite{trogdon_rational_2014}. We have that
\begin{equation*}
    \mathcal{R}_nf(k)=\sum_{j=-n_-}^{n_+}\Tilde{F}_jT^j(k)=\sum_{j=-n_-}^{n_+}\Tilde{F}_jR_{j,0}(k)
\end{equation*}
because $f(\infty)=0$ and we choose $\theta=0$ ($k=\infty)$ to be an interpolation point.
Define the oscillatory interpolation operator by
\begin{equation*}
    \mathcal{R}_{n,\alpha}f(k)=e^{i\alpha k}\mathcal{R}_n\left[f(\diamond)e^{-i\alpha(\diamond)}\right](k).
\end{equation*}

We also want to allow vector inputs. To do so, we overload notation. Given interpolation data
$\mathbf{c}=[c_0,...,c_{n-1}]^T\in\mathbb{R}^n$ at the nodes $k_j=T^{-1}(e^{i\theta_j})$, define $\mathcal{R}_{n,\alpha}\mathbf{c}$ to be $\mathcal{R}_{n,\alpha}g(k)$, where $g$ is any function on $\mathbb R$ satisfying $g(k_j)=c_j$ for $j=0,1,...,n-1$.

\subsection{Differentiation and Multiplication}
\label{sec:Appendix_DiffMult}

We want to derive an expression for $R_{j,\alpha}'(k)$. Differentiating $R_{j,\alpha}(k)$ results in
\begin{equation*}
    \frac{\mathrm{d}}{\mathrm{d}k}R_{j,\alpha}(k)=jR_{j-1}(k)\left(\frac{2i}{(k+i)^2}\right)+je^{ik\alpha}\left(\frac{2i}{(k+i)^2}\right)+i\alpha R_{j,\alpha}(k).
\end{equation*}
We now want to rewrite $2i/(k+i)^2$ in terms of oscillatory rational basis functions. It turns out that 
\begin{equation*}
    \frac{2i}{(k+i)^2}=\left[iR_{1,\alpha}(k)-\frac{i}{2}R_{2,\alpha}(k)\right]e^{-ik\alpha}.
\end{equation*}
Plugging that expression into our derivative and using the following fact,
\begin{equation*}
    R_{j,\alpha}(k)R_{\ell,\beta}(k)=R_{j+\ell,\alpha+\beta}(k)-R_{j,\alpha+\beta}(k)-R_{\ell,\alpha+\beta}(k)
\end{equation*}
results in the following tridiagonal operator for differentiation
\begin{equation*}
    R_{j,\alpha}'(k)=i\left[-\frac{j}{2}R_{j+1,\alpha}(k)+(j+\alpha)R_{j,\alpha}(k)-\frac{j}{2}R_{j-1,\alpha}(k)\right].
\end{equation*}

Suppose $g=\sum_jc_jR_{j,\alpha}(k)$ and consider the operator $\mathcal{M}_gf=gf$. Taking $f=R_{\ell,\beta}$,
\begin{align*}
    g(k)R_{\ell,\beta}(k)&=\sum_jc_jR_{j+\ell,\alpha+\beta}(k)-\sum_j c_jR_{j,\alpha+\beta}(k)-\sum_jc_jR_{\ell,\alpha+\beta}(k)\\
    &=\sum_jc_{j-\ell}R_{j,\alpha+\beta}(k)-\sum_jc_jR_{j,\alpha+\beta}(k)-\left[\sum_jc_j\right]R_{\ell,\alpha+\beta}(k).
\end{align*}
This implies that $\mathcal{M}_g$ has a bi-infinite matrix representation as
\begin{align*}
    \mathcal{M}(\mathbf{c})&:= \mathcal{T}(\mathbf{c})-\mathbf{c}\begin{bmatrix}
        \hdots & 1 & 1 & \hdots
    \end{bmatrix}-\left[\sum_jc_j\right]\mathcal{I},\\
    \mathbf{c}&=\begin{bmatrix}
        \hdots&c_{-1}&c_0&c_1&\hdots
    \end{bmatrix}^T.
\end{align*}
Here, $\mathcal{T}(\mathbf{c})$ is the Toeplitz operator with entry $(i,j)$ given by $c_{i-j}$.

\subsection{Cauchy Integrals}

Recall the notation for the Cauchy integral
\begin{align*}
    \mathcal{C}f(k)=\frac{1}{2\pi i}\int_{-\infty}^\infty\frac{f(k')}{k-k'}\mathrm{d}k,\quad k\in\mathbb{C}\setminus\mathbb{R},\\
    \mathcal{C}^\pm f(k)=\lim_{\epsilon\rightarrow 0^+}\mathcal{C}f(k\pm i\epsilon),\quad k\in\mathbb{R}.
\end{align*}
Our goal is to express $\mathcal{C}R_{j,\alpha}(k)$ in terms of $R_{j,\alpha}(k)$ and $R_{j,0}(k)$. We perform all of our computations in coefficient space. Given a function $f$ expressed as a finite linear combination of $R_{j,\alpha}$'s, we seek to compute the coefficients of $\mathcal{C}^\pm f$ in the same basis. Define for $j>0$, $n>0$,
\begin{equation*}
    \gamma_{j,n}(\alpha)=-e^{-|\alpha|}L_{|j|-n}^{(\beta)}(2|\alpha|\sigma),\quad \sigma=\mathrm{sign}(j),
\end{equation*}
where $L_n^{(\beta)}(x)$ is the generalized Laguerre polynomial of order $n$. Further define
\begin{align*}
    r_{j,\alpha}\left(\frac{-2i\sigma}{k+\sigma i}\right):=\mathrm{Res}_{k'=-\sigma i}\left\{R_{j,\alpha}(k')\frac{1}{k'-k}\right\}=\sum_{n-1}^{|j|}\gamma_{j,n}\left(\frac{-2i\sigma}{k+\sigma i}\right)^n.
\end{align*}
We then use the well-known relation for Laguere polynomials 
\begin{equation*}
    L_n^{(\beta)}(x)=L_n^{(\beta+1)}(x)-L_{n-1}^{(\beta+1)}(x),
\end{equation*}
to obtain the recurrence relation for $j\geq 1$
\begin{equation*}
    r_{j,\alpha}(z)=(1+z)r_{j-1,\alpha}(z)+z(L_{j-1}^{(1)}(2|k|)-L_{j-2}^{(1)}(2|k|)),
\end{equation*}
where $L_{-1}^{(1)}(x):=0$ and $r_{0,\alpha}(z):=0$. Note that $L_j^{(1)}(x)$ can be computed using its three-term recurrence relation~\cite{olver_nist_2010}. Hence, given a vector $\mathbf{k}$ of size $m$, with $\mathbf{z}=\frac{-2i\sigma}{\mathbf{k}+\sigma i}$, the matrix 
\begin{equation*}
    \mathbf{M}_\sigma(\mathbf{k})=\begin{bmatrix}
        \mathbf{r}_{1,\alpha}(\mathbf{z}) & \mathbf{r}_{2,\alpha}(\mathbf{z}) & \hdots & \mathbf{r}_{m,\alpha}(\mathbf{z})
    \end{bmatrix}
\end{equation*}
can be constructed in $\mathcal{O}(m^2)$ operations by building each column from the previous one. This means that to compute $\mathbf{M}_\sigma(\mathbf{k})\mathbf{c}$, it is not necessary to construct the matrix $\mathbf{M}_\sigma(\mathbf{k})$.

So for $\mathbf{f}_+(\mathbf{k})=\sum_{j=1}^m c_jR_{j,\alpha}(\mathbf{k})$ with $\mathbf{c}=\begin{bmatrix}
    c_1& \hdots &c_m
\end{bmatrix}^T$ we have 
\begin{align*}
    \mathcal{C}^+\mathbf{f}_+(\mathbf{k})&=\begin{cases}
        \mathbf{f}_+(\mathbf{k}),\quad &\alpha\geq0,\\
        -\mathbf{M}_{+1}(\mathbf{k})\mathbf{c},\quad &\alpha<0,
    \end{cases}\\
    \mathcal{C}^-\mathbf{f}_+(\mathbf{k})&=\begin{cases}
        \mathbf{0},\quad &\alpha\geq0,\\
        -\mathbf{f}_+(\mathbf{k})-\mathbf{M}_{+1}(\mathbf{k})\mathbf{c},\quad&\alpha<0.
    \end{cases}
\end{align*}
Similarly, for $\mathbf{f}_-(\mathbf{k})=\sum_{j=1}^mc_{-j}R_{-j,\alpha}(\mathbf{k})$ with $\mathbf{c}=\begin{bmatrix}
    c_{-1}&\hdots&c_{-m}
\end{bmatrix}^T$ we have
\begin{align*}
    \mathcal{C}^+\mathbf{f}_-(\mathbf{k})&=\begin{cases}
        \mathbf{f}_-(\mathbf{k})+\mathbf{M}_{-1}(\mathbf{k})\mathbf{c},\quad&\alpha\geq0,\\
        \mathbf{0},\quad&\alpha<0,
    \end{cases}\\
    \mathcal{C}^-\mathbf{f}_-(\mathbf{k})&=\begin{cases}
        \mathbf{M}_{-1}(\mathbf{k})\mathbf{c},\quad&\alpha\geq0,\\
        -\mathbf{f}_-(\mathbf{k}),\quad&\alpha<0.
    \end{cases}
\end{align*}
To compute the Cauchy operator, one separates the resulting function into an oscillatory function and a non-oscillatory function. The coefficients in the expansion of the oscillatory function are always found by multiplying the original coefficients by 1, -1, or 0. For the non-oscillatory function, one evaluates it point-wise on the real axis at the appropriate points for the interpolation operator $\mathcal{R}_{n,0}$ using the matrix $\mathbf{M}_\sigma$. Then the interpolation operator can be applied to compute the coefficients.

For example, if one wants to compute $\mathcal{C}^+\mathbf{f}({\mathbf{k}})$ where $\mathbf{f}(\mathbf{k})=\sum_{j=-m}^mc_jR_{j,\alpha}(\mathbf{k})$ when $\alpha>0$, we have
\begin{align*}
    \mathcal{C}^+\mathbf{f}(\mathbf{k})&=\sum_{j=-m}^mc_jR_{j,\alpha}(\mathbf{k})+\mathcal{R}_{2m+1,0}\mathbf{v}(\mathbf{k}),\\
    \mathbf{v}&=\mathbf{M}_\sigma(\mathbf{k})\mathbf{c},\quad\sigma=-1,\\
    \mathbf{k}&=\begin{bmatrix}
        T^{-1}(e^{i\theta_0})&\hdots&T^{-1}(e^{i\theta_{2m}})
    \end{bmatrix},\\
    \mathbf{c}&=\begin{bmatrix}
        c_{-1}&\hdots&c_{-m}
    \end{bmatrix}^T.
\end{align*}
Or, when $\alpha<0$,
\begin{align*}
    \mathcal{C}^+\mathbf{f}(\mathbf{k})&=-\mathcal{R}_{2m+1,0}\mathbf{v}(\mathbf{k}),\\
    \mathbf{v}&=M_\sigma(\mathbf{k})\mathbf{c},\quad\sigma=1,\\
    \mathbf{k}&=\begin{bmatrix}
        T^{-1}(e^{i\theta_0})&\hdots&T^{-1}(e^{i\theta_{2m}})
    \end{bmatrix},\\
    \mathbf{c}&=\begin{bmatrix}
        c_1&\hdots&c_m
    \end{bmatrix}^T.
\end{align*}
The formulae for $\mathcal{C}^-\mathbf{f}(\mathbf{k})$ can then be deduced from $\mathcal{C}^+-\mathcal{C}^-=\mathbb{I}$. This gives a reasonably fast method to compute the coefficients of the expansion of $\mathcal{C}^\pm[\mathbf{f}_++\mathbf{f}_-]$ in the basis $R_{j,\alpha}$ in $\mathcal{O}(m^2)$ operations. This efficiency is a key motivation for using these basis functions with infinite-dimensional \gls{gmres}, as the action of the Cauchy operators is closed on the basis.

\subsection{An Integration Formula}\label{subsec:Integration}

Define the inner product
    \begin{equation*}
        I_{j,\ell,\alpha_1,\alpha_2} = \int_\mathbb{R}R_{j,\alpha_1}(k)\overline{R_{\ell,\alpha_2}(k)}\text{d}z.
        \end{equation*}
    Since $\overline{R_{\ell,\alpha_2}(k)}=R_{-\ell,-\alpha_2}(k)$,
    \begin{equation*}
        I_{j,\ell,\alpha_1,\alpha_2}=\dashint_{\mathbb{R}}\left(R_{j-\ell,\alpha_1,\alpha_2}(k)-R_{j,\alpha_1-\alpha_2}(k)-R_{-\ell,\alpha_1-\alpha_2}(k)\right)\text{d}k.
    \end{equation*}
Therefore, the problem reduces to computing $\dashint_\mathbb{R}R_{j,\alpha}(k)\text{d}k$.
A formula for the integral of the oscillatory rational basis functions, $R_{j,\alpha}(k)$, was derived in~\cite{trogdon_application_2015}:
\begin{equation}\label{eq:Integration}
    \dashint_{\mathbb{R}}R_{j,\alpha}(k)\mathrm{d}k=\begin{cases}
        0,\quad &\mathrm{sign}(j)=\mathrm{sign}(\alpha),\\
        -2\pi|j|,\quad &\alpha=0,\\
        -4\pi e^{-|\alpha|}L_{|j|-1}^{(1)}(2|\alpha|),\quad &\mathrm{otherwise}.
    \end{cases}
\end{equation}

\section{Deferred proofs}\label{sec:Proof}

We include the following for completeness.

\subsection{Proof of Lemma~\ref{lemma:Jost}}

\begin{proof}[Proof of Lemma~\ref{lemma:Jost}]
Define the linear operator
\begin{equation*}
    \mathcal{L}\mathbf{r}(x;\lambda) := \int_{-\infty}^x\mathbf{K}(x,s;\lambda)\mathbf{P}(s)\mathbf{r}(s;\lambda)\mathrm{d}s.
\end{equation*}
Then the Volterra equation can be written as
\begin{equation*}
    \mathbf r - \mathcal L \mathbf r = \mathbf f.
\end{equation*}
It is immediate that
\begin{equation*}
    \mathcal{L}^n\mathbf{f}(x)=\int_{-\infty<s_n<...<s_1<x}\left[\mathbf{K}(x,s_1;\lambda)\mathbf{P}(s_1)\mathbf{K}(s_1,s_2;\lambda)\mathbf{P}(s_2)...\mathbf{K}(s_{n-1},s_n;\lambda)\mathbf{P}(s_n)\right]
    \mathbf{f}(s_n)\mathrm{d}s_1...\mathrm{d}s_n.
\end{equation*}
Using $\|\mathbf{K}(x,s;\lambda)\|\leq C_0$ gives the classical estimate
\begin{align*}
    \|\mathcal{L}^n\mathbf{f}(x)\|&\leq C_0^n\|\vec f\|_{L^\infty(
    \mathbb R,\mathbb C^2)} \int_{-\infty<s_n<...<s_1<x}\prod_{j=1}^{n}\|\mathbf{P}(s_j)\|\mathrm{d}s_1...\mathrm{d}s_n\\
    &=\frac{C_0^n\|\vec f\|_{L^\infty(
    \mathbb R,\mathbb C^2)}}{n!}\left(\int_{-\infty}^x\|\mathbf{P}(s)\|\mathrm{d}s\right)^n\leq \|\vec f\|_{L^\infty(
    \mathbb R,\mathbb C^2)}\frac{C_0^nM^n}{n!}, \quad M = \|\vec P\|_{L^1(
    \mathbb{R},\mathbb C^{2\times 2})}.
\end{align*}
The first equality follows from the fact that $\prod_{j=1}^n\|\mathbf{P}(s_j)\|$ is invariant under permutations of $(s_1,...,s_n)$, so integrating over the full cube $(-\infty,x)^n$ is $n!$ times the integral over the simplex ${-\infty<s_n<...<s_1<x}$.

This establishes the convergence of the Neumann series 
\begin{align*}
    \vec r(x;\lambda) = \sum_{n=0}^\infty \mathcal L^n \vec f(x),
\end{align*}
as a sequence of operators on $L^\infty(
\mathbb R,\mathbb C^2)$.  Thus
\begin{equation*}
    \|\mathbf{r}(x;\lambda)\|\leq e^{C_0M}.
\end{equation*}
It follows from the dominated convergence theorem that $\lambda \mapsto \mathcal L^n \vec f(x;\lambda)$ is continuous in $\lambda$ for $\lambda \in \overline{\mathbb C^+}$.  Similarly, it follows that $\lambda \mapsto \mathcal L^n \vec f(x;\lambda)$ is complex differentiable for $\lambda \in \mathbb C^+$, and is therefore analytic.  By the uniform convergence of the Neumann series, $\vec r(x;\lambda)$ is analytic for $\lambda \in \mathbb C^+$.  

\end{proof}

\subsection{Proof of Lemma~\ref{lemma:JostDeriv}}

\begin{proof}[Proof of Lemma~\ref{lemma:JostDeriv}]
    We will prove the result for $\partial_x\mathbf{r}_{1,-}(x;\lambda)$. The arguments for $\partial_x\mathbf{r}_{2,+}(x;\lambda)$, $\partial_x\mathbf{r}_{2,-}(x;\lambda)$, and $\partial_x\mathbf{r}_{1,+}(x;\lambda)$ are analogous. We begin by considering $\mathbf{r}_{1,-}$, 
    \begin{align*}
        \mathbf{r}_{1,-}(x;\lambda)&=\begin{bmatrix}
            1\\0
        \end{bmatrix}+\int_{-\infty}^x\begin{bmatrix}
            1 & 0\\
            0 & e^{2i\lambda(x-s)}
        \end{bmatrix}\begin{bmatrix}
            0 & q(s)\\
            \tau\overline{q}(s) & 0
        \end{bmatrix}\mathbf{r}_{1,-}(s;\lambda)\mathrm{d}s\\
        &=\begin{bmatrix}
            1\\0
        \end{bmatrix}+\int_{-\infty}^x\begin{bmatrix}
            q(s)r_{1,-}^{(2)}(s;\lambda)\\
            \tau\overline{q}(s)r_{1,-}^{(1)}(s;\lambda)e^{2i\lambda(x-s)}
        \end{bmatrix}\mathrm{d}s,
    \end{align*}
    where $r_{1,-}^{(n)}$, $n = 1,2$ denotes two entries of $\mathbf{r}_{1,-}$. Letting $f(s;\lambda):=\tau\overline{q}(s)r_{1,-}^{(1)}(s;\lambda)$ and integrating by parts gives
    \begin{align*}
        r_{1,-}^{(2)}(x;\lambda)=-\frac{f(x;\lambda)}{2i\lambda}+\frac{1}{2i\lambda}\int_{-\infty}^x\partial_s[f(s;\lambda)]e^{2i\lambda(x-s)}\mathrm{d}s.
    \end{align*}
    Observe that $\partial_sf(s;\lambda)=\tau\partial_s\overline{q}(s)r_{1,-}^{(1)}(s;\lambda)+\tau|q(s)|^2r_{1,-}^{(2)}(s;\lambda)$ since $\partial_x r_{1,-}^{(1)}(x;\lambda)=q(x)r_{1,-}^{(2)}(x;\lambda)$ (see~\eqref{eq:Component1}).
    By Corollary~\ref{cor:Jost}, $\left\|\mathbf{r}_{1,-}(s;\lambda)\right\|\leq C$, uniformly in $s,\lambda$, so $|\partial_sf(s;\lambda)|\leq C(|\partial_s\overline{q}(s)|+|q(s)|^2)$ and therefore $\|\partial_s f(s;\lambda)\|_{L^1(\mathbb{R},\mathbb{C})} \leq C'$, for a new constant $C'$. We have
    \begin{equation*}
        \left|r_{1,-}^{(2)}(x;\lambda)\right|\leq \frac{C''}{|\lambda|}\left(|f(x;\lambda)|+\int_{-\infty}^x|\partial_sf(s;\lambda)|\mathrm{d}s\right).
    \end{equation*}
    Therefore
    \begin{equation*}
        r_{1,-}^{(2)}(x;\lambda)=\mathcal{O}\left(\frac{1}{|\lambda|}\right),\quad|\lambda|\rightarrow\infty,\quad \lambda \in \mathbb C^+.
    \end{equation*}
    Then from the first component equation,
    \begin{equation}\label{eq:Component1}
        r_{1,-}^{(1)}(x;\lambda)=1+\int_{-\infty}^x q(s)r_{1,-}^{(2)}(s;\lambda)\mathrm{d}s,
    \end{equation}
    and again using uniform boundedness, $r_{1,-}^{(1)}(x;\lambda)= 1 + \mathcal{O}(1/|\lambda|)$ as $|\lambda|\rightarrow\infty$, $\lambda \in \mathbb C^+$. 
    Therefore, 
    \begin{equation*}
        \mathbf{r}_{1,-}(x;\lambda)=\begin{bmatrix}
            1\\0
        \end{bmatrix} + \mathcal{O}\left(\frac{1}{|\lambda|}\right),\quad|\lambda|\rightarrow\infty,\quad \lambda \in \mathbb C^+.
    \end{equation*}
    
    Using $\partial_x r_{1,-}^{(1)}(x;\lambda)=q(x)r_{1,-}^{(2)}(x;\lambda)$, we see immediately that $\partial_xr_{1,-}^{(1)}(x;\lambda)=\mathcal{O}\left(\frac{1}{|\lambda|}\right)$, $|\lambda|\rightarrow\infty$, $\lambda\in\mathbb{C}^+$. Substituting the expression for $r_{1,-}^{(1)}(x;\lambda)$ into the expression for $r_{1,-}^{(2)}(x;\lambda)$ results in
    \begin{equation*}
        r_{1,-}^{(2)}(x;\lambda)=\int_{-\infty}^x \tau\bar{q}(s)e^{2i\lambda(x-s)}\left[1+\int_{-\infty}^s q(t)r_{1,-}^{(2)}(t;\lambda)\mathrm{d}t\right]\mathrm{d}s.
    \end{equation*}
    Hence, 
    \begin{align*}
        \partial_x r_{1,-}^{(2)}(x;\lambda)&=\tau\bar{q}(x)\left[1+\int_{-\infty}^x q(t)r_{1,-}^{(2)}(t;\lambda)\mathrm{d}t\right] + 2i\lambda r_{1,-}^{(2)}(x;\lambda)\\
        &=\tau\bar{q}(x)r_{1,-}^{(1)}(x;\lambda) + 2i\lambda r_{1,-}^{(2)}(x;\lambda).
    \end{align*}
    Using integration by parts on the formula for $r_{1,-}^{(2)}(x;\lambda)$, we obtain
    \begin{align*}
        \partial_x r_{1,-}^{(2)}(x;\lambda)
        &=
        \tau\overline{q}(x)r_{1,-}^{(1)}(x;\lambda)
        +
        2i\lambda
        \left[
            -\frac{\tau\overline{q}(x)r_{1,-}^{(1)}(x;\lambda)}{2i\lambda}
            +
            \frac{1}{2i\lambda}
            \int_{-\infty}^x
            \partial_s[f(s;\lambda)]e^{2i\lambda(x-s)}
            \mathrm{d}s
        \right]\\
        &=
        \int_{-\infty}^x
        \partial_s[f(s;\lambda)]e^{2i\lambda(x-s)}
        \mathrm{d}s.
    \end{align*}
    Since $\partial_s f(s;\lambda)\in L^1(\mathbb{R},\mathbb{C})$ with norm bounded independently of $\lambda$, it follows that
    \begin{align*}
        \left|
            \partial_x r_{1,-}^{(2)}(x;\lambda)
        \right|
        \leq
        \int_{-\infty}^x
        \left|
            \partial_sf(s;\lambda)
        \right|
        \mathrm{d}s
        \leq
        \left\|
            \partial_s f(s;\lambda)
        \right\|_{L^1(\mathbb{R},\mathbb{C})}
        \leq C_1
    \end{align*}
    Therefore,
    \begin{equation*}
        \left\|
            \partial_x r_{1,-}^{(2)}(x;\lambda)
        \right\|
        \leq C_1.
    \end{equation*}
\end{proof}

\bibliographystyle{plain}
\bibliography{references}

@book{reed_functional_1980,
	address = {San Diego, Calif.},
	edition = {Revised},
	title = {Functional {Analysis}},
	volume = {1},
	isbn = {978-0-12-585050-6},
	language = {English},
	publisher = {Academic Press},
	author = {Reed, M and Simon, B},
	month = jan,
	year = {1980},
}

@book{muskhelishvili_singular_2013,
	title = {Singular {Integral} {Equations}: {Boundary} {Problems} of {Function} {Theory} and {Their} {Application} to {Mathematical} {Physics}},
	isbn = {978-0-486-14506-8},
	shorttitle = {Singular {Integral} {Equations}},
	language = {en},
	publisher = {Courier Corporation},
	author = {Muskhelishvili, N I},
	month = feb,
	year = {2013},
}

@article{levin_analysis_1997,
	title = {Analysis of a collocation method for integrating rapidly oscillatory functions},
	volume = {78},
	issn = {0377-0427},
	url = {https://www.sciencedirect.com/science/article/pii/S0377042796001379},
	doi = {10.1016/S0377-0427(96)00137-9},
	number = {1},
	urldate = {2025-02-20},
	journal = {Journal of Computational and Applied Mathematics},
	author = {Levin, D},
	month = feb,
	year = {1997},
	pages = {131--138},
}

@misc{trogdon_ultraspherical_2024,
	title = {The ultraspherical rectangular collocation method and its convergence},
	url = {http://arxiv.org/abs/2401.03608},
	doi = {10.48550/arXiv.2401.03608},
	urldate = {2024-09-19},
	publisher = {arXiv},
	author = {Trogdon, T},
	month = jan,
	year = {2024},
	note = {arXiv:2401.03608 [cs, math]},
}

@book{olver_nist_2010,
	address = {New York, NY},
	title = {{NIST} handbook of mathematical functions},
	isbn = {978-0-521-19225-5 978-0-521-14063-8},
	language = {en},
	publisher = {Cambridge University Press},
	author = {Olver, F W J and Lozier, D W and Boisvert, R F and Clark, C W},
	year = {2010},
}

@article{trogdon_scattering_2021,
	title = {Scattering and inverse scattering for the {AKNS} system: {A} rational function approach},
	volume = {147},
	copyright = {© 2021 Wiley Periodicals LLC},
	issn = {1467-9590},
	shorttitle = {Scattering and inverse scattering for the {AKNS} system},
	url = {https://onlinelibrary.wiley.com/doi/abs/10.1111/sapm.12434},
	doi = {10.1111/sapm.12434},
	language = {en},
	number = {4},
	urldate = {2024-09-19},
	journal = {Studies in Applied Mathematics},
	author = {Trogdon, T},
	year = {2021},
	pages = {1443--1480},
}

@article{trogdon_rational_2014,
	title = {Rational {Approximation}, {Oscillatory} {Cauchy} {Integrals}, and {Fourier} {Transforms}},
	volume = {43},
	doi = {10.1007/s00365-015-9294-2},
	journal = {Constructive Approximation},
	author = {Trogdon, T},
	month = mar,
	year = {2014},
}

@article{trogdon_application_2015,
	title = {On the application of {GMRES} to oscillatory singular integral equations},
	volume = {55},
	issn = {0006-3835},
	url = {https://doi.org/10.1007/s10543-014-0502-4},
	doi = {10.1007/s10543-014-0502-4},
	number = {2},
	urldate = {2025-06-14},
	journal = {BIT},
	author = {Trogdon, T},
	month = jun,
	year = {2015},
	pages = {591--620},
}

@book{hislop_introduction_1996,
	address = {New York, NY},
	series = {Applied {Mathematical} {Sciences}},
	title = {Introduction to {Spectral} {Theory}: {With} {Applications} to {Schrödinger} {Operators}},
	copyright = {http://www.springer.com/tdm},
	isbn = {978-1-4612-6888-8 978-1-4612-0741-2},
	shorttitle = {Introduction to {Spectral} {Theory}},
	url = {http://link.springer.com/10.1007/978-1-4612-0741-2},
	language = {en},
	urldate = {2025-07-09},
	publisher = {Springer},
	author = {Hislop, P D and Sigal, I M},
	year = {1996},
	doi = {10.1007/978-1-4612-0741-2},
	note = {ISSN: 0066-5452},
}

@book{ablowitz_solitons_1991,
	address = {Cambridge},
	series = {London {Mathematical} {Society} {Lecture} {Note} {Series}},
	title = {Solitons, {Nonlinear} {Evolution} {Equations} and {Inverse} {Scattering}},
	isbn = {978-0-521-38730-9},
	url = {https://www.cambridge.org/core/books/solitons-nonlinear-evolution-equations-and-inverse-scattering/5DBED94706868291C74AB13ACC750A79},
	urldate = {2025-01-31},
	publisher = {Cambridge University Press},
	author = {Ablowitz, M J and Clarkson, P A},
	year = {1991},
	doi = {10.1017/CBO9780511623998},
}

@book{chicone_ordinary_2006,
        address = {New York},
	series = {Texts in {Applied} {Mathematics}},
	title = {Ordinary {Differential} {Equations} with {Applications}},
	copyright = {http://www.springer.com/tdm},
	isbn = {978-0-387-30769-5},
	url = {http://link.springer.com/10.1007/0-387-35794-7},
	language = {en},
	urldate = {2025-07-09},
	publisher = {Springer},
	author = {Chicone, C},
	year = {2006},
	doi = {10.1007/0-387-35794-7},
}

@book{trogdon_riemannhilbert_2016,
	address = {Philadelphia},
	title = {Riemann–{Hilbert} {Problems}, their {Numerical} {Solution}, and the {Computation} of {Nonlinear} {Special} {Functions}},
	isbn = {978-1-61197-419-5},
	language = {English},
	publisher = {Society for Industrial and Applied Mathematics},
	author = {Trogdon, T and Olver, S},
	month = oct,
	year = {2016},
}

@Misc{OperatorApproximation,
author =   {Trogdon, T and Lilly, K and Ballew, C and Vaes, W},
title =    {Operator Approximation: a {Julia} package for approximating functions and operators and solving operator equations.},
howpublished = {\url{https://github.com/tomtrogdon/OperatorApproximation.jl/}},
year = {2025}
                }

@book{stakgold_boundary_1967,
	edition = {Second Printing},
	title = {Boundary {Value} {Problems} of {Mathematical} {Physics}},
	volume = {1},
	language = {English},
	publisher = {Macmillan Company},
	author = {Stakgold, I},
	month = jan,
	year = {1967},
}

@book{faddeev_hamiltonian_1987,
	address = {Berlin, Heidelberg},
	title = {Hamiltonian {Methods} in the {Theory} of {Solitons}},
	copyright = {http://www.springer.com/tdm},
	isbn = {978-3-540-69843-2 978-3-540-69969-9},
	url = {http://link.springer.com/10.1007/978-3-540-69969-9},
	urldate = {2025-01-30},
	publisher = {Springer},
	author = {Faddeev, L D and Takhtajan, L A},
	year = {1987},
	doi = {10.1007/978-3-540-69969-9},
}

@article{fokas_boundary-value_2004,
	title = {Boundary-{Value} {Problems} for {Linear} {PDEs} with {Variable} {Coefficients}},
	volume = {460},
	issn = {1364-5021},
	url = {https://www.jstor.org/stable/4143236},
	number = {2044},
	urldate = {2024-09-19},
	journal = {Proceedings of the Royal Society of London. Series A: Mathematical, Physical and Engineering Sciences},
	author = {Fokas, A S},
	year = {2004},
	pages = {1131--1151},
}

@article{deconinck_variable-coefficient_2025,
	title = {Variable-{Coefficient} {Evolution} {Problems} via the {Fokas} {Method} {Part} {I}: {Dissipative} {Case}},
	volume = {154},
	copyright = {© 2024 Wiley Periodicals LLC.},
	issn = {1467-9590},
	shorttitle = {Variable-{Coefficient} {Evolution} {Problems} via the {Fokas} {Method} {Part} {I}},
	url = {https://onlinelibrary.wiley.com/doi/abs/10.1111/sapm.12800},
	doi = {10.1111/sapm.12800},
	language = {en},
	number = {1},
	urldate = {2025-01-03},
	journal = {Studies in Applied Mathematics},
	author = {Deconinck, B and Farkas, M},
	year = {2025},
	pages = {e12800},
}

@article{ablowitz_inverse_1974,
	title = {The {Inverse} {Scattering} {Transform}-{Fourier} {Analysis} for {Nonlinear} {Problems}},
	volume = {53},
	issn = {1467-9590},
	url = {https://onlinelibrary.wiley.com/doi/abs/10.1002/sapm1974534249},
	doi = {10.1002/sapm1974534249},
	language = {en},
	number = {4},
	urldate = {2025-08-20},
	journal = {Studies in Applied Mathematics},
	author = {Ablowitz, M J and Kaup, D J and Newell, A C and Segur, H},
	year = {1974},
	note = {},
	pages = {249--315},
}

@article{deift_inverse_1979,
	title = {Inverse scattering on the line},
	volume = {32},
	copyright = {Copyright © 1979 Wiley Periodicals, Inc., A Wiley Company},
	issn = {1097-0312},
	url = {https://onlinelibrary.wiley.com/doi/abs/10.1002/cpa.3160320202},
	doi = {10.1002/cpa.3160320202},
	language = {en},
	number = {2},
	urldate = {2025-08-20},
	journal = {Communications on Pure and Applied Mathematics},
	author = {Deift, P and Trubowitz, E},
	year = {1979},
	note = {},
	pages = {121--251},
}

@book{ablowitz_complex_2003,
	address = {Cambridge},
	edition = {2},
	series = {Cambridge {Texts} in {Applied} {Mathematics}},
	title = {Complex {Variables}: {Introduction} and {Applications}},
	isbn = {978-0-521-53429-1},
	shorttitle = {Complex {Variables}},
	url = {https://www.cambridge.org/core/books/complex-variables/08A62E6DB03F5D5435F5DE6260618002},
	urldate = {2025-08-20},
	publisher = {Cambridge University Press},
	author = {Ablowitz, M J and Fokas, A S},
	year = {2003},
	doi = {10.1017/CBO9780511791246},
}

@book{beals_direct_1988,
	address = {Providence, Rhode Island},
	series = {Mathematical {Surveys} and {Monographs}},
	title = {Direct and {Inverse} {Scattering} on the {Line}},
	volume = {28},
	isbn = {978-0-8218-1530-4 978-1-4704-1255-5},
	url = {http://www.ams.org/surv/028},
	language = {en},
	urldate = {2025-08-20},
	publisher = {American Mathematical Society},
	author = {Beals, R and Deift, P and Tomei, C},
	year = {1988},
	doi = {10.1090/surv/028},
}

@book{drazin_solitons_1989,
	address = {Cambridge},
	edition = {2},
	series = {Cambridge {Texts} in {Applied} {Mathematics}},
	title = {Solitons: {An} {Introduction}},
	isbn = {978-0-521-33655-0},
	shorttitle = {Solitons},
	url = {https://www.cambridge.org/core/books/solitons/3992154606336D7459B839EB29BF9C38},
	urldate = {2024-09-19},
	publisher = {Cambridge University Press},
	author = {Drazin, P G and Johnson, R S},
	year = {1989},
	doi = {10.1017/CBO9781139172059},
}

@article{zakharov_exact_1970,
	title = {Exact {Theory} of {Two}-dimensional {Self}-focusing and {One}-dimensional {Self}-modulation of {Waves} in {Nonlinear} {Media}},
	url = {https://www.semanticscholar.org/paper/Exact-Theory-of-Two-dimensional-Self-focusing-and-Zakharov-Shabat/73ef172d46291e1b2235d4fea6d80078f5e705ef},
	urldate = {2025-08-20},
	journal = {Journal of Experimental and Theoretical Physics},
	author = {Zakharov, V and Shabat, A},
	year = {1970},
}

@article{wilkening_spectral_2015,
	title = {A {Spectral} {Transform} {Method} for {Singular} {Sturm}--{Liouville} {Problems} with {Applications} to {Energy} {Diffusion} in {Plasma} {Physics}},
	volume = {75},
	issn = {0036-1399},
	url = {https://epubs.siam.org/doi/10.1137/130941948},
	doi = {10.1137/130941948},
	number = {2},
	urldate = {2024-09-19},
	journal = {SIAM Journal on Applied Mathematics},
	author = {Wilkening, J and Cerfon, A},
	month = jan,
	year = {2015},
	pages = {350--392},
}

@Misc{Code,
author =   {Lilly, K},
title =    {{Generalized}{Transforms}: a {Julia} package for computing the generalized transform pair associated with the {Dirac} equation.},
howpublished = {\url{https://github.com/klilly50/GeneralizedTransforms}},
year = {2025}
                }

@book{lovitt_linear_1924,
	title = {Linear {Integral} {Equations}},
	isbn = {0-486-44285-3},
	url = {https://store.doverpublications.com/products/9780486174648},
	language = {en},
	urldate = {2026-01-01},
	publisher = {McGraw-Hill Book Co.},
	author = {Lovitt, W V},
	year = {1924},
}

@article{weber_numerical_1980,
	title = {Numerical computation of the fourier transform using {Laguerre} functions and the {Fast} {Fourier} {Transform}},
	volume = {36},
	issn = {0945-3245},
	url = {https://doi.org/10.1007/BF01396758},
	doi = {10.1007/BF01396758},
	language = {en},
	number = {2},
	urldate = {2026-01-16},
	journal = {Numerische Mathematik},
	author = {Weber, H},
	month = jun,
	year = {1980},
	pages = {197--209},
}

@inproceedings{deift_orthogonal_2000,
	address = {Providence, Rhode Island},
	series = {Courant {Lecture} {Notes}},
	title = {Orthogonal {Polynomials} and {Random} {Matrices}: {A} {Riemann}-{Hilbert} {Approach}},
	volume = {3},
	isbn = {978-0-8218-2695-9 978-1-4704-3107-5},
	booktitle = {Orthogonal {Polynomials} and {Random} {Matrices}},
	url = {http://www.ams.org/cln/003},
	language = {en},
	urldate = {2026-02-11},
	publisher = {American Mathematical Society},
	author = {Deift, P},
	month = oct,
	year = {2000},
	doi = {10.1090/cln/003},
}

@article{Olver2011ComputingTH,
  title={Computing the Hilbert transform and its inverse},
  author={S Olver},
  journal={Math. Comput.},
  year={2011},
  volume={80},
  pages={1745-1767},
  url={https://api.semanticscholar.org/CorpusID:33190302}
}

@article{11e5cc21-6b4a-3b97-8220-bac5264f5b1f,
 ISSN = {00255718, 10886842},
 URL = {http://www.jstor.org/stable/2153449},
 author = {J A C Weideman},
 journal = {Mathematics of Computation},
 number = {210},
 pages = {745--762},
 publisher = {American Mathematical Society},
 title = {Computing the Hilbert Transform on the Real Line},
 urldate = {2025-04-04},
 volume = {64},
 year = {1995}
}

@book{fokas_unified_2008,
	address = {Philadelphia, PA},
	series = {{CBMS}-{NSF} {Regional} {Conference} {Series} in {Applied} {Mathematics}},
	title = {A {Unified} {Approach} to {Boundary} {Value} {Problems}},
	isbn = {978-0-89871-651-1},
	language = {English},
	number = {78},
	publisher = {Society for Industrial and Applied Mathematics},
	author = {Fokas, A S},
	year = {2008},
}

@book{titchmarsh_e_c_introduction_1948,
	edition = {2nd},
	title = {Introduction {To} {The} {Theory} {Of} {Fourier} {Integrals}},
	isbn = {0-19-853320-9},
	url = {http://archive.org/details/dli.ernet.2568},
	language = {eng},
	urldate = {2026-04-17},
	publisher = {Oxford University Press},
	author = {E C Titchmarsh},
	year = {1948},
}

@article{GMRES,
author = {Saad, Y and Schultz, M H},
title = {GMRES: A Generalized Minimal Residual Algorithm for Solving Nonsymmetric Linear Systems},
journal = {SIAM Journal on Scientific and Statistical Computing},
volume = {7},
number = {3},
pages = {856-869},
year = {1986},
doi = {10.1137/0907058},
URL = {https://doi.org/10.1137/0907058},
eprint = {https://doi.org/10.1137/0907058},
}

\end{document}